\def\makeheadbox{{%
\hbox to0pt{\vbox{\baselineskip=10dd\hrule\hbox
to\hsize{\vrule\kern3pt\vbox{\kern3pt
\hbox{\bfseries Draft for discussion }
\hbox{Date of this version: 13.04.21}
\kern3pt}\hfil\kern3pt\vrule}\hrule}%
\hss}}}
\def\cequiv{\raisebox{-1.5mm}{$\;\stackrel{\raisebox{-3.9mm}{=}}{{\sim}}\;$}}
\newtheorem{theorem}{Theorem}[section]
\newtheorem{remark}[theorem]{Remark}\newtheorem{proposition}[theorem]{Proposition}
\newtheorem{lemma}[theorem]{Lemma}
\newtheorem{definition}[theorem]{Definition}
\newcounter{mnote}
\let\oldmarginpar\marginpar
\renewcommand\marginpar[1]{\-\oldmarginpar[\raggedleft\footnotesize #1]
  {\raggedright\footnotesize #1}}
\newcommand{\ud}{\,d}
\newcommand{\jump}[1]{\llbracket {#1} \rrbracket}
\numberwithin{equation}{section}
\setlist[enumerate]{nosep}
\def\uu{\undertilde{u}}
\def\uw{\undertilde{w}}
\def\uv{\undertilde{v}}
\def\uV{\undertilde{V}}
\def\uW{\undertilde{W}}
\def\uZ{\undertilde{Z}}
\def\uL{\undertilde{L}}
\def\uH{\undertilde{H}}
\def\uC{\undertilde{C}}
\def\uphi{\undertilde{\varphi}}
\def\upsi{\undertilde{\psi}}
\def\uf{\undertilde{f}}
\def\ug{\undertilde{g}}
\def\uz{\undertilde{z}}
\def\rot{{\rm rot}}
\def\curl{{\rm curl}}
\def\dv{{\rm div}}
\begin{document}
\title[A low-degree strictly conservative finite element method]{A low-degree strictly conservative finite element method for incompressible flows}

%
%
%
%

\author{Huilan Zeng, Chen-Song Zhang, and Shuo Zhang}

\address{
LSEC, Institute of Computational Mathematics and Scientific/Engineering Computation, Academy of Mathematics and System Sciences, Chinese Academy of Sciences, Beijing 100190, China;
 University of Chinese Academy of Sciences, Beijing 100049,  China} 
 
\email{
\big\{zhl, zhangcs, szhang\big\}@lsec.cc.ac.cn
}

\thanks{This work is supported by the Strategic Priority Research Program of Chinese Academy of Sciences (XDB 41000000), National Key R\&D Program of China (2020YFA0711904), and the Natural Science Foundation of China (11871465 and 11971472).}

\subjclass[2000]{Primary 65N12, 65N30, 76D05} 

\keywords{Incompressible (Navier--)Stokes equations; Brinkman equations; inf-sup condition; discrete Korn's inequality, strictly conservative scheme; pressure-robust discretization.}

\maketitle

\begin{abstract}
In this paper, a new $P_{2}-P_{1}$ finite element pair is proposed for incompressible fluid. For this pair, the discrete inf-sup condition and the discrete Korn's inequality hold on general triangulations. It yields exactly divergence-free velocity approximations when applied to models of incompressible flows. The robust capacity of the pair for incompressible flows are verified theoretically and numerically.  
\end{abstract}

\section{Introduction}
\label{intro}

The property of conservation plays a key role in the modeling of many physical systems. For the Stokes problem, for example, if a stable finite element pair can inherit the mass conservation, the approximation of the velocity can be independent of the pressure and the method does not suffer from the locking effect with respect to large Reynolds' numbers (c.f., e.g.,~\cite{Brezzi;Fortin1991}). The importance of conservative schemes is also significant in, e.g., the nonlinear mechanics~\cite{Auricchio;Veiga;Lovadina;Reali2010,Auricchio2013} and the magnetohydrodynamics~\cite{Hu;Ma;Xu2017,Hiptmair;Li;Mao;Zheng2018,Hu;Xu2019}. In this paper, we focus on the conservative scheme for the Stokes-type problems for incompressible flows. As Stokes-type problems are applied widely to not only fluid problems but also elastic models such as the earth model with a fluid core~\cite{Dahlen1974} and they are immediately related to many other model problems, their conservative schemes can be relevant and helpful to more other equations.
~\\

Most classical stable Stokes pairs relax the divergence-free constraint by enforcing the condition in the weak sense, and the conservation can  be preserved strictly only for special examples. Though, during the past decade, the conservative schemes have been recognized more clearly as \emph{pressure robustness} and widely studied and surveyed in, e.g.,~\cite{Nicolas;Alexander;Philipp2019,Guzman;NeilanGeneral2014,Neilan2017,Schroeder;Lube2018}.  This conservation is also related to other key features such as  ``viscosity-independent"~\cite{Uchiumi2019} and ``gradient-robustness"~\cite{Linke;Merdon2020} for numerical schemes. There have been various successful examples along different technical approaches. Efforts have been devoted to the construction of conforming conservative pairs, and extra structural assumptions are generally needed for the subdivision and finite element functions. Examples include conforming elements designed for special meshes, such as $P_{k}-P_{k-1}$ triangular elements for $k\geqslant 4$ on singular-vertex-free meshes~\cite{Scott;Vogelius2009} and for smaller $k$ constructed on composite grids~\cite{Arnold;Qin1992,Scott;Vogelius2009,Qin;Zhang2007,Zhang2008,XuZhang2010}, and the pairs given in~\cite{FALK;NEILAN;2013,Guzman;NeilanGeneral2014} which work for general triangulations but with extra smoothness requirement and more complicated shape function spaces. A natural way to relax the constraints is to use $H({\rm div})$-conforming but $(H^1)^2$-nonconforming finite element functions for the velocity. For example, in \cite{Mardal;Tai;Winther2002}, a reduced cubic polynomial space which is $H(\dv)$-conforming and $(H^1)^2$-nonconforming is used for the velocity and piecewise constant for the pressure. The pair is both stable and conservative on general triangulations. The velocity space of \cite{Mardal;Tai;Winther2002} can be recognized as a modification of an $H(\dv)$-conforming space by adding some normal-bubble-like functions to enforce weak continuity of tangential component. Several conservative pairs are constructed subsequently in, e.g.,~\cite{Guz;Neilan2012,Tai;Winther2006,Xie;Xu;Xue2008}. Generally, to construct a conservative pair that works on general triangulations without special structures, cubic and higher-degree polynomials are used for the velocity. Besides, For conservative pairs in three-dimension, we refer to, e.g.,~\cite{Guzman;Neilan2018,Zhang3D2005,ZhangPS2011} where composite grids are required, as well as \cite{Guzman;Neilan2013,Zhang3D2011} where high degree local polynomials are utilized. We refer to~\cite{Chen;Dong;Qiao2013,Huang;Zhang2011,ZhangSY2009} for rectangular grids and~\cite{Neilan;Sap2016} for cubic grids where full advantage of the geometric symmetry of the cells are taken. 
~\\

In this paper, we propose a new $P_2-P_1$ finite element pair on triangulations; for the velocity field, we use piecewise quadratic $H(\dv)$ functions whose tangential component is continuous in the average sense, and for the pressure, we use discontinuous piecewise linear functions. The pair is stable and immediately strictly conservative on general triangulations. Further, a discrete Korn's inequality holds for the velocity.  The capability of the pair is verified both theoretically and numerically. When applied to the Stokes and the Darcy--Stokes--Brinkman problems, the approximation of the velocity is independent of the small parameters and thus locking-free; numerical experiments verify the validity of the theory. We note that, as the tangential component of the velocity function is continuous only in the average sense, the convergence rate can only be proved to be of $\mathcal{O}(h)$ order. However, since the pair is conservatively stable on general triangulations, it plays superior to some $\mathcal{O}(h^2)$ schemes numerically in robustness with respect to triangulations and with respect to small parameters. The performance of the pair on the Navier--Stokes equation is also illustrated numerically. More applications in other model problems for both the source problems and the eigenvalue problems may be studied in future. 
~\\

For the newly designed space for velocity, all the degrees of freedom are located on edges of the triangulation. It is thus impossible to construct a commutative nodal interpolator with a non-constant pressure space. To prove the inf-sup condition, we adopt Stenberg's macroelement technique~\cite{Stenberg1990technique}. On every macroelement, the surjection property of the divergence operator is confirmed by figuring out its kernel space. This figures out a structure of discretized Stokes complex on any local macroelements. On the other hand, similar to the study of conservative pairs in \cite{FALK;NEILAN;2013,Guzman;NeilanGeneral2014} and the study of biharmonic finite elements in \cite{Shuo.Zhang2020,Falk;Morley;1990,ShuoZhang2016,ZZZ2021}, the proposed global space will be embedded in a discretized Stokes complex on the whole triangulation; this will be studied in detail in future.   
~\\

The method given uses an $H({\rm div})$-conforming and $(H^1)^2$-nonconforming finite element for the velocity. Indeed, the space given here is a reduced subspace of the second order Brezzi-Douglas-Marini element \cite{BrezziDouglasMarini1985} space by enhancing smoothness. This way, the proposed pair is different from most existing $H(\dv)$-conforming and $(H{}^1)^{2}$-nonconforming methods which propose to add bubble-like basis functions on some specific $H(\dv)$ finite element space. Moreover, as we use  quadratic polynomials only for velocity, to the best of our knowledge, this is the lowest-degree conservative stable pair for the Stokes problem on general triangulations. More stable and conservative pairs may be designed by reducing other $H(\dv)$-conforming elements. The possible generalization of the proposed pair to three-dimensional case will also be discussed.
~\\

Finally we remark that, besides these finite element methods mentioned above, an alternative is to construct specially discrete variational forms onto $H({\rm div})$ functions where extra stabilizations may play roles; works such as the discontinuous Galerkin method, the weak Galerkin method, and the virtual element method all fall into this category. There have been many valuable works of these types, but we do not seek to give a complete survey and thus will not discuss them in the present paper. We only note that natural connections between the proposed pair and DG-type methods may be expected under the framework of~\cite{Hong;Wang;Wu;Xu}; along the lines of~\cite{Dios;Brezzi;Marini;Xu;Zikatanov2014}, these connections may be expected helpful for the construction of optimal solvers for the DG schemes.  
~\\

The rest of the paper is organized as follows. At the remaining of this section, some notations are given. In Section~\ref{sec:new element}, a new $P_{2}-P_{1}$ element method is proposed, and significant properties of it are presented. In Sections~\ref{sec:Model problems}, the convergence analysis of the element applied to the Stokes problem and the Darcy-Stokes-Brinkman problem is provided. In Section~\ref{sec:numerical experiments}, numerical experiments are presented to reflect the efficiency of the strictly conservative method when compared with some classical elements. A meticulous proof of a significant lemma devoted to the verification of the inf-sup condition is put in Appendix~\ref{sec:app}.

\subsection{Notations}
Throughout this paper, $\Omega$ is a bounded and connected polygonal domain in~$\mathbb{R}^{2}$.  We use $\nabla$, $\Delta$, $\dv$, $\rot$, $\curl$ to denote the gradient, Laplace, divergence, rotation, and curl operators, respectively.  As usual, we use $L^{p}(\Omega)$, $H^{s}(\Omega)$, $H(\dv,\Omega)$, $H(\rot,\Omega)$, $H_{0}^{s}(\Omega)$, and $H_{0}(\dv,\Omega)$ for standard Sobolev spaces. Denote $\displaystyle L^{2}_0(\Omega):=\Big\{w\in L^{2}(\Omega):\int_\Omega w \ud \Omega =0\Big\}$. We use `` $\undertilde{~}$ " for vector valued quantities. Specifically, we denote $\uL{}^{p}(\Omega):=\big(L^{p}(\Omega)\big)^{2}$, $\undertilde{H}{}^{s}(\Omega):=\big(H^{s}(\Omega)\big)^{2}$, $\undertilde{H}{}(\dv,\Omega):= \big(H(\dv,\Omega)\big)^{2}$, and $\uH{}(\rot,\Omega):= \big(H(\rot,\Omega)\big)^{2}$. Denote, by $H^{-s}(\Omega)$ and $\uH{}^{-s}(\Omega)$, the dual spaces of $H_{0}^{s}(\Omega)$ and $\uH{}_{0}^{s}(\Omega)$, respectively. We utilize the subscript $``\cdot_h"$ to indicate the dependence on grids. Particularly, an operator with the subscript $``\cdot_h"$ implies the operation is done cell by cell. We denote $(\cdot, \cdot)$ and $\langle \cdot, \cdot \rangle$ as the usual inner product and the dual product, respectively. Finally, $\lesssim$, $\gtrsim$, and $\cequiv$ respectively denote $\leqslant$, $\geqslant$, and $=$ up to some multiplicative generic constant~\cite{J.Xu1992}, which only depends on the domain and the shape-regularity of subdivisions. 

Let $\big\{\mathcal{T}_h\big\}$ be in a family of triangular grids of domain $\Omega$. The boundary $\partial \Omega = \Gamma_{D} \cup \Gamma_{N}$. Let $\mathcal{N}_h$ be the set of all vertices, $\mathcal{N}_h=\mathcal{N}_h^i\cup\mathcal{N}_h^b$, with $\mathcal{N}_h^i$ and $\mathcal{N}_h^b$ comprising the interior vertices and the boundary vertices, respectively. Similarly, let $\mathcal{E}_{h}=\mathcal{E}_{h}^{i}\bigcup\mathcal{E}_{h}^{b}$ be the set of all the edges, with $\mathcal{E}_{h}^{i}$ and $\mathcal{E}_{h}^{b}$ comprising the interior edges and the boundary edges, respectively. For an edge $e$, $\mathbf{n}_e$ is a unit vector normal to $e$ and $\boldsymbol{t}_e$ is a unit tangential vector of $e$ such that $\mathbf{n}_e\times \boldsymbol{t}_e>0$. On the edge $e$, we use $\llbracket\cdot\rrbracket_e$ for the jump across $e$. We stipulate that, if $e = T_{1}\cap T_{2}$, then $\llbracket v \rrbracket_e = \big(v|_{T_{1}} - v|_{T_{2}}\big)|_{e}$ if the direction of $\mathbf{n}_{e}$ goes from $T_{1}$ to $T_{2}$, and if $e\subset\partial\Omega$, then $\llbracket\cdot\rrbracket_e$ is the evaluation on $e$. 

Suppose that $T$ represents a triangle in $\mathcal{T}_{h}$.  Let $h_{T}$ and $\rho_{T}$ be the circumscribed radius and the inscribed circles radius of $T$, respectively.
Let $h := \max\limits_{T \in \mathcal{T}_{h}}h_{T}$ be the mesh size of $\mathcal{T}_{h}$. Let $P_l(T)$ denote the space of polynomials on $T$ of the total degree no more than~$l$. Similarly, we define the space $P_l(e)$ on an edge~$e$. We assume that $\{\mathcal{T}_{h}\}$ is a family of regular subdivisions, i.e.,
\begin{equation}\label{eq:regularity}
\max_{T\in \mathcal{T}_{h}}\frac{h_{T}}{\rho_{T}} \leq \gamma_{0},
\end{equation}
where $\gamma_{0}$ is a generic constant independent of $h$.

\section{A new $P_2-P_1$ finite element pair}
\label{sec:new element}
\subsection{Construction of a new finite element pair}
Let $T$ be a triangle with nodes $\{a_{i},a_{j},a_{k}\}$, and $e_{i}$ be an edge of $T$ opposite to the $i$-th vertex $a_{i}$; see Figure~\ref{fig:triangle}. Denote a unit vector normal to $e_{i}$ and a unit tangential vector of $e_{i}$ as  $\mathbf{n}_{T,e_{i}}$ and $\mathbf{t}_{T,e_{i}}$, respectively. 
\begin{figure}[htbp]
\centerline{\includegraphics[width=2in]{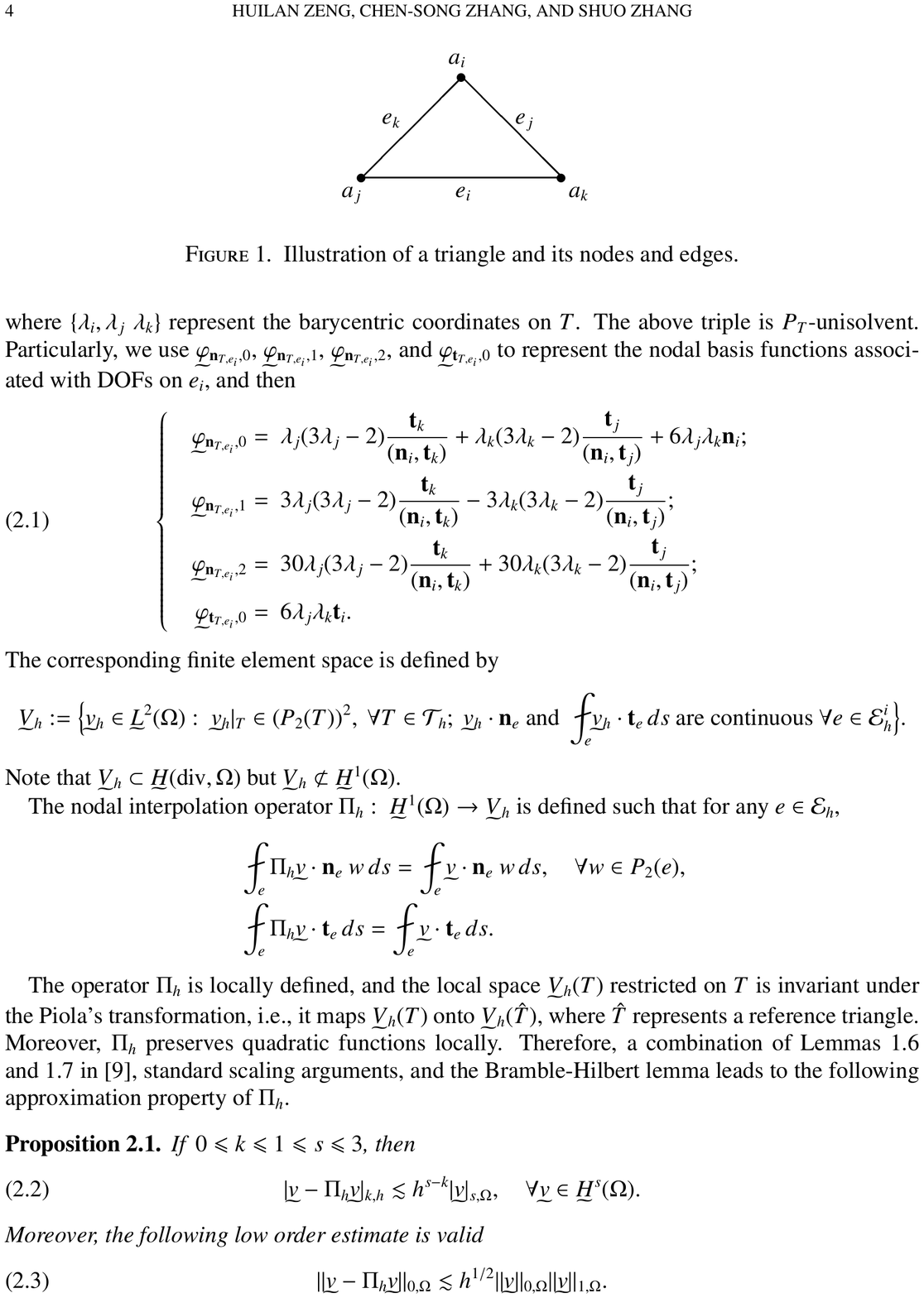}}
\caption{Illustration of a triangle and its nodes and edges.}\label{fig:triangle}
\end{figure}

The new vector $P_{2}$ element is defined by the triple
 $(T,P_{T},D_{T})$:
\begin{itemize}
\item[(1)] $T$ is a triangle;
\item[(2)] $P_{T} := (P_{2}(T))^{2}$;
\item[(3)] for any $\uv\in (H^{1}(T))^{2}$, the degrees of freedom on $T$, denoted by $D_{T}$, are
$$\Big\{  \  \fint_{e_{i}}\uv\cdot  \mathbf{n}_{T,e_{i}} \ud s, \ \fint_{e_{i}}\uv\cdot  \mathbf{n}_{T,e_{i}} (\lambda_{j}-\lambda_{k})\ud s,
\ \fint_{e_{i}}\uv\cdot  \mathbf{n}_{T,e_{i}} (-\lambda_{j}\lambda_{k}+\frac{1}{6})\ud s, \ \fint_{e_{i}}\uv\cdot  \mathbf{t}_{T,e_{i}} \ud s \ \Big\}_{i=1:3},$$
\end{itemize}
where $\{\lambda_{i}, \lambda_{j}\ \lambda_{k}\}$ represent the barycentric coordinates on $T$. 
The above triple is $P_{T}$-unisolvent. Particularly, we use $\uphi{}_{\mathbf{n}_{T,e_{i}},0}$, $\uphi{}_{\mathbf{n}_{T,e_{i}},1}$, $\uphi{}_{\mathbf{n}_{T,e_{i}},2}$, and $\uphi{}_{\mathbf{t}_{T,e_{i}},0}$ to represent the nodal basis functions associated with DOFs on $e_{i}$, and then
\begin{equation}\label{eq:basis on T}
\left\{ \quad
\begin{split}
\uphi{}_{\mathbf{n}_{T,e_{i}},0}= \ \ & \lambda_{j}(3\lambda_{j}-2)\frac{\mathbf{t}_{k}}{(\mathbf{n}_{i},\mathbf{t}_{k})} + \lambda_{k}(3\lambda_{k}-2)\frac{\mathbf{t}_{j}}{(\mathbf{n}_{i},\mathbf{t}_{j})} + 6\lambda_{j}\lambda_{k}\mathbf{n}_{i}; \\
\uphi{}_{\mathbf{n}_{T,e_{i}},1}=  \ \ & 3\lambda_{j}(3\lambda_{j}-2)\frac{\mathbf{t}_{k}}{(\mathbf{n}_{i},\mathbf{t}_{k})}  - 3\lambda_{k}(3\lambda_{k}-2)\frac{\mathbf{t}_{j}}{(\mathbf{n}_{i},\mathbf{t}_{j})}; \\
\uphi{}_{\mathbf{n}_{T,e_{i}},2}= \ \  & 30\lambda_{j}(3\lambda_{j}-2)\frac{\mathbf{t}_{k}}{(\mathbf{n}_{i},\mathbf{t}_{k})} + 30 \lambda_{k}(3\lambda_{k}-2)\frac{\mathbf{t}_{j}}{(\mathbf{n}_{i},\mathbf{t}_{j})}; \\
\uphi{}_{\mathbf{t}_{T,e_{i}},0}= \ \ & 6\lambda_{j}\lambda_{k}\mathbf{t}_{i}.
\end{split}
\quad
\right.
\end{equation}
The corresponding finite element space is defined by
\begin{multline*}\uV{}_{h} := \Big\{\uv{}_{h} \in \undertilde{L}{}^{2}(\Omega):\ \uv{}_{h}|_{T} \in (P_{2}(T))^{2}, \ \forall T \in \mathcal{T}_{h};\ \uv{}_{h}\cdot \mathbf{n}_{e} 
\mbox{ and } \fint_{e} \uv{}_{h}\cdot \mathbf{t}_{e} \ud s 
\mbox{ are continuous } \forall e\in \mathcal{E}_{h}^{i}  \Big\}.
\end{multline*}
Note that $\uV{}_{h}\subset \uH(\dv,\Omega)$ but $\uV{}_{h}\not\subset \uH{}^{1}(\Omega)$. 

Define a nodal interpolation operator $\Pi_{h}: \ \uH{}^{1}(\Omega) \rightarrow \uV{}_{h}$ such that for any $e\in \mathcal{E}_{h}$,
\begin{align*}
& \fint_{e}\Pi_{h}\uv\cdot \mathbf{n}_{e} \ w\ud s = \fint_{e}\uv\cdot \mathbf{n}_{e} \ w \ud s, \quad \forall w \in P_{2}(e), \\
&\fint_{e}\Pi_{h}\uv\cdot \mathbf{t}_{e} \ud s = \fint_{e}\uv\cdot \mathbf{t}_{e} \ud s.
\end{align*}

The operator $\Pi_{h}$ is locally defined, and the local space $\uV{}_{h}(T)$ restricted on $T$ is invariant under the Piola's transformation, i.e., it maps $\uV{}_{h}(T)$ onto $\uV{}_{h}(\hat{T})$, where $\hat{T}$ represents a reference triangle. Moreover, $\Pi_{h}$ preserves quadratic functions locally. Therefore, a combination of Lemmas~1.6 and~1.7~in~\cite{Brezzi;Fortin1991}, standard scaling arguments, and the Bramble-Hilbert lemma leads to the following approximation property of $\Pi_{h}$. 
\begin{proposition}\label{pro:interpolation}
If $\,0\leqslant k \leqslant 1 \leqslant s \leqslant 3$, then 
\begin{align}\label{eq:interpolation error}
|\uv - \Pi_{h} \uv|_{k,h}\lesssim h^{s-k}|\uv|_{s,\Omega}, \quad \forall \uv \in \uH^{s}(\Omega). 
\end{align}
Moreover, the following low order estimate is valid
\begin{align}\label{eq:interpolation lower error}
\|\uv - \Pi_{h}\uv \|_{0,\Omega} \lesssim h^{1/2}\|\uv\|_{0,\Omega}\|\uv\|_{1,\Omega}.
\end{align}
\end{proposition}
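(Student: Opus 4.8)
The plan is to follow the classical affine-equivalence recipe encoded in Lemmas~1.6 and~1.7 of~\cite{Brezzi;Fortin1991}: reduce everything to a reference triangle $\hat T$ through the contravariant Piola transform, prove boundedness and polynomial reproduction of the reference interpolant, invoke the Bramble--Hilbert lemma, and finally scale back and sum over $T\in\mathcal T_h$. The two structural facts already recorded before the statement---that $\uV{}_h(T)$ is mapped onto $\uV{}_h(\hat T)$ by Piola and that $\Pi_h$ reproduces $(P_2(T))^2$ locally---are exactly the two hypotheses needed to run this machinery, so the remaining work is to (i) check that the reference operator is bounded and (ii) track the powers of $h$ correctly.

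First I would fix $\hat T$ and the associated interpolant $\widehat\Pi$ defined by the same edge degrees of freedom. The key observation is that every functional in $D_{\hat T}$ is an integral over an edge of $\hat T$ of $\hat\uv\cdot\mathbf n$ or $\hat\uv\cdot\mathbf t$ tested against a fixed polynomial. By the trace theorem $\uH{}^1(\hat T)\hookrightarrow \uL{}^2(\partial\hat T)$, each such functional is bounded on $\uH{}^1(\hat T)$, whence $\|\widehat\Pi\hat\uv\|_{1,\hat T}\lesssim\|\hat\uv\|_{1,\hat T}$, since the image is a fixed finite-dimensional polynomial space on which all norms are equivalent. Because $\widehat\Pi$ fixes $(P_2(\hat T))^2\supseteq (P_{s-1}(\hat T))^2$ for $1\le s\le 3$, the operator $I-\widehat\Pi$ annihilates $(P_{s-1}(\hat T))^2$, so Bramble--Hilbert gives $|\hat\uv-\widehat\Pi\hat\uv|_{k,\hat T}\lesssim|\hat\uv|_{s,\hat T}$ for $0\le k\le 1\le s\le 3$. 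Mapping back through the Piola transform, the seminorm equivalences under scaling produce the factor $h^{s-k}$ on each $T$; squaring and summing over $\mathcal T_h$ yields~\eqref{eq:interpolation error}.

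For the low-order bound~\eqref{eq:interpolation lower error} I would not aim for a fresh estimate but combine two already-available ones. A scaling argument identical to the one above, but keeping the full norm in place of invoking reproduction, gives the local stability bound $\|\Pi_h\uv\|_{0,T}\lesssim\|\uv\|_{0,T}+h_T|\uv|_{1,T}$, and hence $\|\uv-\Pi_h\uv\|_{0,T}\lesssim\|\uv\|_{0,T}+h_T|\uv|_{1,T}$; on the other hand,~\eqref{eq:interpolation error} with $s=1$, $k=0$ gives the local approximation bound $\|\uv-\Pi_h\uv\|_{0,T}\lesssim h_T|\uv|_{1,T}$. Multiplying these two local bounds, summing over $T$, applying the Cauchy--Schwarz inequality, and using $h_T\le h$ produces
\[
\|\uv-\Pi_h\uv\|_{0,\Omega}^2\lesssim h\,\|\uv\|_{0,\Omega}\,\|\uv\|_{1,\Omega},
\]
which is the asserted geometric-mean estimate.

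The step I expect to be the genuine obstacle is the scaling under Piola rather than any single inequality. The contravariant Piola transform acts anisotropically, scaling the normal and tangential components of $\uv$ by different factors, so one must verify that the seminorm equivalences transfer with constants depending only on the shape-regularity constant $\gamma_0$ in~\eqref{eq:regularity}; this is also where the tangential-average degree of freedom, less naturally compatible with Piola than the normal moments, must be checked to remain uniformly bounded. A secondary point worth stating explicitly is that, because all degrees of freedom live on edges, $\Pi_h$ is defined only on $\uH{}^1$ and can never be $\uL{}^2$-stable by itself---which is precisely why the low-order bound has to be obtained as a geometric mean rather than directly.
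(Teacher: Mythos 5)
Your proof of \eqref{eq:interpolation error} is essentially the paper's own argument: the authors justify it in one sentence by invoking Lemmas~1.6--1.7 of Brezzi--Fortin, the Piola-invariance of the local space, preservation of quadratics, scaling, and Bramble--Hilbert, and your reference-element details (trace-theorem boundedness of the edge functionals on $\uH{}^{1}(\hat T)$, norm equivalence on the finite-dimensional image, annihilation of $(P_{s-1}(\hat T))^{2}$) fill that sketch in correctly. The Piola subtlety you flag for the tangential-average degree of freedom is real but routine: one need not make $\Pi_h$ commute with the Piola map at all, since each degree of freedom can be bounded directly on the physical element by a scaled trace inequality, with constants depending only on $\gamma_0$ in \eqref{eq:regularity}.

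Your derivation of \eqref{eq:interpolation lower error}, however, has a genuine leak in the last step. Multiplying the local bounds $\|\uv-\Pi_h\uv\|_{0,T}\lesssim \|\uv\|_{0,T}+h_T|\uv|_{1,T}$ and $\|\uv-\Pi_h\uv\|_{0,T}\lesssim h_T|\uv|_{1,T}$ and summing produces $h\|\uv\|_{0,\Omega}|\uv|_{1,\Omega}+h^{2}|\uv|_{1,\Omega}^{2}$, and the second term is \emph{not} controlled by $h\|\uv\|_{0,\Omega}\|\uv\|_{1,\Omega}$: for $\uv=(\sin(x/\delta),0)$ with $\delta\ll h$ one has $h^{2}|\uv|_{1,\Omega}^{2}\sim h^{2}\delta^{-2}$ while $h\|\uv\|_{0,\Omega}\|\uv\|_{1,\Omega}\sim h\delta^{-1}$. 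This is not a cosmetic loss, because the estimate is applied in \eqref{eq:uniform Pi_h L2 norm} to $\uu-\uu{}^{0}$, whose $H^{1}$-norm blows up like $\varepsilon^{-1/2}$; the leaked term destroys the uniform $h^{1/2}$ rate precisely in the regime $\varepsilon\lesssim h$ that Theorem~\ref{thm:uniform error Brinkman} is designed for. The repair is to replace the additive stability bound by one based on the \emph{multiplicative} trace inequality $\|\uv\|_{0,\partial T}^{2}\lesssim h_T^{-1}\|\uv\|_{0,T}^{2}+\|\uv\|_{0,T}|\uv|_{1,T}$, which together with $\|\Pi_h\uv\|_{0,T}\lesssim h_T^{1/2}\|\uv\|_{0,\partial T}$ gives
\begin{equation*}
\|\uv-\Pi_h\uv\|_{0,T}^{2}\lesssim \|\uv\|_{0,T}^{2}+h_T\|\uv\|_{0,T}|\uv|_{1,T},
\end{equation*}
and then to argue elementwise by cases: if $h_T|\uv|_{1,T}\leqslant\|\uv\|_{0,T}$, the approximation bound gives $\|\uv-\Pi_h\uv\|_{0,T}^{2}\lesssim h_T^{2}|\uv|_{1,T}^{2}\leqslant h_T\|\uv\|_{0,T}|\uv|_{1,T}$; otherwise $\|\uv\|_{0,T}^{2}<h_T\|\uv\|_{0,T}|\uv|_{1,T}$ and the stability bound gives the same conclusion. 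In either case $\|\uv-\Pi_h\uv\|_{0,T}^{2}\lesssim h_T\|\uv\|_{0,T}|\uv|_{1,T}$, and Cauchy--Schwarz over $\mathcal{T}_h$ yields $\|\uv-\Pi_h\uv\|_{0,\Omega}\lesssim h^{1/2}\|\uv\|_{0,\Omega}^{1/2}\|\uv\|_{1,\Omega}^{1/2}$. Note this is the correctly homogeneous form of the claim: as printed, \eqref{eq:interpolation lower error} is missing the square roots (it is not $1$-homogeneous in $\uv$), and it is the geometric-mean version that is actually used in \eqref{eq:uniform Pi_h L2 norm}; your squared display shows you were targeting the right statement, but your route does not reach it.
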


Assume $\Gamma_{D}$ to be a part of the boundary $\partial\Omega$. Define 
\begin{align*}
\uV{}_{hD} := \Big\{\uv{}_{h} \in \undertilde{L}{}^{2}(\Omega): \ \uv{}_{h}|_{T} \in (P_{2}(T))^{2}, \ \forall T \in \mathcal{T}_{h}; \ \uv{}_{h}\cdot \mathbf{n}_{e}  \  \mbox{and } \fint_{e} \uv{}_{h}\cdot \mathbf{t}_{e} \ud s
\\
 \mbox{ are continuous  } \mbox{for any } e\in \mathcal{E}_{h}^{i} \mbox{ and vanish for any } e\subset \Gamma_{D}  \Big\}.
\end{align*}
Specially, if $\Gamma_{D} = \partial\Omega$, $\uV{}_{hD}$ is written as $\uV{}_{h0}$. 
Define
\begin{align*}
Q_{h} := \{q\in L^{2}(\Omega): q|_{T} \in P_{1}(T), \forall T \in \mathcal{T}_{h}\}, \ \mbox{ and } \
Q_{h*}: = Q_{h}\cap L_{0}^{2}(\Omega).
\end{align*}
Evidently, $\dv\,\uV{}_{h}\subset Q_{h}$. Therefore, $\uV{}_{hD}\times Q_h$ and $\uV{}_{h0}\times Q_{h*}$ each forms a conservative pair. The stability and discrete Korn's inequality also hold. We firstly introduce an assumption on the triangulations. 

\medskip

\noindent{\bf Assumption A.}\label{ass:A}
Every triangle in $\mathcal{T}_{h}$ has at least one vertex in the interior of $\Omega$. 

The theorems below, which will be proved in the sequel subsections, hold on triangulations that satisfy {\bf Assumption A}. 

\begin{theorem}[Inf-sup conditions]\label{thm:inf-sup mixed boundary}
Let $\{\mathcal{T}_{h}\}$ be a family of  triangulations satisfying~{\bf Assumption~A}. Then 
\begin{align}
&\sup_{ \uv{}_{h} \in \uV{}_{hD}} 
\frac{\int_{\Omega} \dv\,\uv{}_{h}\,q_{h}\ud \Omega}{\|\uv{}_{h}\|_{1,h}} \gtrsim \|q\|_{0,\Omega}, \quad \forall q \in Q_{h},\quad\mbox{if }\ \Gamma_D\neq\partial\Omega,\label{eq:disc inf-sup}\\
&\sup_{ \uv{}_{h} \in \uV{}_{h0}} \frac{\int_{\Omega} \dv\, \uv{}_{h}\,q_{h}\ud \Omega}{\|\uv{}_{h}\|_{1,h}} \gtrsim \|q\|_{0,\Omega}, \quad \forall q \in Q_{h*}.\label{eq:disc inf-sup Vh0}
\end{align}
\end{theorem}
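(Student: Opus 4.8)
The plan is to derive both estimates from Stenberg's macroelement technique~\cite{Stenberg1990technique}, which reduces the global inf-sup inequality to a local surjectivity property of the divergence on a covering of $\mathcal{T}_h$ by macroelements. First I would set up the macroelement family: for each interior vertex $a\in\mathcal{N}_h^i$ let $M_a$ be the patch of all triangles sharing $a$. \textbf{Assumption~A} guarantees that every triangle possesses an interior vertex, so the patches $\{M_a\}$ cover $\Omega$; the shape-regularity~\eqref{eq:regularity} then ensures that, up to affine mapping, there are only finitely many equivalence classes of patches and that each triangle lies in a uniformly bounded number of them. This bookkeeping is what makes all constants in the ensuing scaling arguments independent of $h$.

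The heart of the matter is the local kernel condition on a single macroelement $M=M_a$. Writing $\uV{}_h^{0}(M)$ for the velocity functions whose support lies in $M$, I would show that
\[
N_M:=\Big\{q\in Q_h(M):\ \int_M \dv\,\uv{}_h\,q\ud\Omega=0\ \ \forall\,\uv{}_h\in\uV{}_h^{0}(M)\Big\}
\]
consists precisely of the functions that are constant on $M$. The route is integration by parts: since the admissible $\uv{}_h$ vanish on $\partial M$ and are $\uH{}(\dv)$-conforming, the pairing collapses to $-\sum_{T\subset M}\int_T\uv{}_h\cdot\nabla q\ud\Omega+\sum_{e}\int_e \uv{}_h\cdot\mathbf{n}_e\jump{q}_e\ud s$ over the interior edges of $M$. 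Testing against the explicit nodal basis in~\eqref{eq:basis on T} --- the three normal moments and the tangential moment on each interior edge --- turns membership in $N_M$ into a linear system for the piecewise-linear coefficients of $q$; the normal degrees of freedom control $\uv{}_h\cdot\mathbf{n}_e$ as a full $P_2(e)$ datum and first force $\jump{q}_e=0$ on every interior edge, after which the remaining volume constraints force $\nabla_h q=0$ cellwise, i.e.\ $q$ is globally constant on $M$. Equivalently, this identifies the local discretized Stokes complex and shows that $\dv$ maps $\uV{}_h^{0}(M)$ onto $Q_h(M)\cap L_0^2(M)$. I expect this kernel computation, delicate because the tangential component is continuous only in the average sense, to be the main obstacle; it is precisely the lemma that the paper defers to Appendix~\ref{sec:app}.

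Granting the local condition, Stenberg's theorem assembles the global result in two layers. For the component of $q$ that has zero mean on each macroelement, local surjectivity produces patchwise velocity corrections $\uv{}_h$ with $\|\uv{}_h\|_{1,h}\lesssim\|q\|_{0,\Omega}$, and these are summed with bounded overlap. The residual component of $q$ is constant on each macroelement; for it I would run a Fortin-type argument using only the constant normal-flux degrees of freedom, reducing the claim to the classical inf-sup surjectivity of the lowest-order $\uH{}(\dv)$ pairing on the coarser macroelement mesh.

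The two cases are separated only by the boundary treatment. When $\Gamma_D\neq\partial\Omega$ there is a Neumann portion on which normal fluxes remain free, so the macroelement-constant layer can absorb the full mean of $q$ and~\eqref{eq:disc inf-sup} holds for every $q\in Q_h$. When $\Gamma_D=\partial\Omega$ the velocity vanishes on all of $\partial\Omega$, whence $\int_\Omega\dv\,\uv{}_h\ud\Omega=0$ forces the test pressures into $Q_{h*}=Q_h\cap L_0^2(\Omega)$; the same two-layer construction then yields~\eqref{eq:disc inf-sup Vh0}, the coarse step now being the standard surjectivity onto mean-zero macroelement constants.
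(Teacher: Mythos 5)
Your global architecture coincides with the paper's: vertex-centred macroelement patches whose covering and bounded overlap are guaranteed by \textbf{Assumption~A} and shape regularity, Stenberg's conditions $(C_1)$--$(C_4)$ with the interpolant $\Pi_h$ supplying $(C_4)$, and the free normal fluxes on $\Gamma_N$ to pass from~\eqref{eq:disc inf-sup Vh0} to~\eqref{eq:disc inf-sup} (the paper carries this out quantitatively via the Kouhia--Stenberg four-step combination with the parameter $\kappa$). The genuine gap is in your proposed proof of the local condition $(C_1)$. Testing $q\in N_M$ against an edge-based normal basis function $\uv{}_h$ supported on the two cells $T_1,T_2$ sharing an interior edge $e$ gives, after integration by parts,
\begin{equation*}
0=\int_e (\uv{}_h\cdot\mathbf{n}_e)\,\jump{q}_e \ud s-\sum_{l=1,2}\nabla q|_{T_l}\cdot\int_{T_l}\uv{}_h \ud T,
\end{equation*}
and the cell averages of the basis functions~\eqref{eq:basis on T} do \emph{not} vanish: for instance $\int_T \uphi{}_{\mathbf{n}_{T,e_{i}},1}\ud T=\tfrac{S}{2}\bigl[\mathbf{t}_{j}/(\mathbf{n}_{i},\mathbf{t}_{j})-\mathbf{t}_{k}/(\mathbf{n}_{i},\mathbf{t}_{k})\bigr]\neq \undertilde{0}$ in general, and similarly for $\uphi{}_{\mathbf{n}_{T,e_{i}},0}$, $\uphi{}_{\mathbf{n}_{T,e_{i}},2}$, $\uphi{}_{\mathbf{t}_{T,e_{i}},0}$. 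So the normal moments do not ``first force $\jump{q}_e=0$'': each test equation couples the two jump parameters on $e$ with the four gradient parameters of $q$ on $T_1\cup T_2$, edge-by-edge the system is underdetermined, and your claimed decoupling (jumps first, gradients afterwards) is false as a proof step. Closing the system requires exploiting the closed chain of cells around the interior vertex, which is exactly the content your sketch omits.

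The paper sidesteps this dual linear system entirely by a primal dimension count: $\dim \uV{}_{h0,M}=4m$ (four DOFs per interior edge), $\dim Q_{h*,M}=3m-1$, and the hard input is Lemma~\ref{lem:localkernel}, $\dim\bigl({\rm ker}(\dv,\uV{}_{h0,M})\bigr)\leqslant m+1$, proved in Appendix~\ref{sec:app} by explicitly constructing divergence-free ``atom functions'' supported on four-cell chains and sweeping around the vertex; rank--nullity then yields ${\rm Im}(\dv,\uV{}_{h0,M})=Q_{h*,M}$, hence $(C_1)$. If you want to keep your orthogonality route you must solve the coupled jump--gradient system globally on the patch, which is essentially equivalent in difficulty to the appendix argument; as written, your key step would fail. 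The remainder of your outline (the Stenberg assembly, the $\Gamma_D=\partial\Omega$ mean-zero restriction, the Neumann-flux absorption of the global constant) is consistent with the paper, but without a valid proof of $(C_1)$ the theorem is not established.
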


\begin{theorem}[Discrete Korn's inequality]\label{thm:Korn's ineq}
Let $\{\mathcal{T}_{h}\}$ be a family of  triangulations satisfying {\bf Assumption~A}. Let $\epsilon (\uv) := \frac{1}{2}[\nabla v + (\nabla v)^{T}]$. Then
\begin{align}\label{eq:Korn's ineq}
\sum_{T \in \mathcal{T}_{h}}\int_{T}|\epsilon (\uv)|^{2} \ud T \gtrsim |\uv|_{1,h}^{2}, \quad \forall \uv \in \uV{}_{hD}.
\end{align}
\end{theorem}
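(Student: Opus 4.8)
The plan is to deduce \eqref{eq:Korn's ineq} from a Korn-type inequality for piecewise $H^1$ vector fields in the spirit of Brenner, and then to check that the weak continuity encoded in $\uV_{hD}$ annihilates every interelement penalty term that appears. For a general broken field $\uv\in\prod_{T\in\mathcal{T}_h}(H^1(T))^2$, such an inequality controls the broken seminorm $|\uv|_{1,h}^2$ by the broken symmetric gradient $\sum_T\|\epsilon(\uv)\|_{0,T}^2$, plus a sum of edge functionals that involve the jump $\jump{\uv}$ only through its lowest moment $\int_e\jump{\uv}\,ds$, plus finitely many functionals that pin down the rigid-body motions (on which $\epsilon$ and all jumps vanish). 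Accordingly the proof splits into two tasks: show that each edge functional vanishes on $\uV_{hD}$, and show that the boundary conditions on $\Gamma_D$ remove the rigid-body kernel.

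For the edge functionals, the decisive point is that only the zeroth moment of the jump enters, and this is exactly what the two defining constraints of $\uV_{hD}$ control. On an interior edge $e\in\mathcal{E}_h^i$ write
\[
\jump{\uv}=\jump{\uv\cdot\mathbf{n}_e}\,\mathbf{n}_e+\jump{\uv\cdot\mathbf{t}_e}\,\mathbf{t}_e .
\]
Since $\uV_{hD}\subset\uH(\dv,\Omega)$, the normal component is fully continuous, $\jump{\uv\cdot\mathbf{n}_e}=0$ pointwise, while the continuity of $\fint_e\uv\cdot\mathbf{t}_e\,ds$ gives $\int_e\jump{\uv\cdot\mathbf{t}_e}\,ds=0$; hence $\int_e\jump{\uv}\,ds=\mathbf{0}$. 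On a Dirichlet edge $e\subset\Gamma_D$ the definition forces $\uv\cdot\mathbf{n}_e\equiv0$ and $\fint_e\uv\cdot\mathbf{t}_e\,ds=0$, so the corresponding one-sided functional $\int_e\uv\,ds$ also vanishes, and the Neumann edges carry no penalty. Therefore every edge contribution is zero and the inequality collapses to $|\uv|_{1,h}^2\lesssim\sum_T\|\epsilon(\uv)\|_{0,T}^2$, which is \eqref{eq:Korn's ineq}.

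The step I expect to be the main obstacle is the treatment of the rigid-body motions, i.e.\ making sure the right-hand side controls the full broken seminorm rather than only its quotient modulo the three-dimensional space of rigid motions. A nonzero rigid motion $\mathbf{r}(x)=\mathbf{a}+b(-x_2,x_1)$ is affine and globally smooth, hence lies in $\uV_h$ and is annihilated by $\epsilon$ and by all the jump functionals, so it can only be excluded through the data on $\Gamma_D$. The verification is that on a single edge $e\subset\Gamma_D$ the homogeneous conditions $\mathbf{r}\cdot\mathbf{n}_e\equiv0$ on $e$ and $\fint_e\mathbf{r}\cdot\mathbf{t}_e\,ds=0$ already impose three independent linear constraints on $\mathbf{a}$ and $b$ and thus force $\mathbf{r}=\mathbf{0}$; this is where $\Gamma_D\neq\emptyset$ is genuinely used and is the reason the estimate is asserted on $\uV_{hD}$ and not on all of $\uV_h$. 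The care needed here lies in matching the mesh-independent rigid-motion functionals supplied by the Brenner-type inequality to the boundary functionals available in $\uV_{hD}$ with an $h$-uniform constant; once this identification is made, all rigid-motion terms vanish together with the edge terms and the claimed bound follows.
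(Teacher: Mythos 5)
Your reduction rests on a piecewise Korn inequality in which the interelement terms see the jump only through its zeroth moment $\int_e\jump{\uv}\,ds$. No such inequality exists, and this is precisely the trap this element is engineered to avoid. Concretely, let $T_1,T_2$ share an edge $e$ with midpoint $m_e=(m_1,m_2)$, and set $\uv=\undertilde{0}$ on $T_1$ and $\uv(x)=b\,\big(-(x_2-m_2),\,x_1-m_1\big)$ on $T_2$ (a rigid rotation about $m_e$). Then $\epsilon(\uv)=0$ on each cell, the jump across $e$ equals $\pm b\,s\,\mathbf{n}_e$ in the arclength variable $s$, so $\int_e\jump{\uv}\,ds=0$ and even $\int_e\jump{\uv\cdot\mathbf{t}_e}\,ds=0$; moreover $\uv$ vanishes identically on any edge of $T_1$, so every boundary functional you propose for pinning rigid motions vanishes as well --- yet $|\uv|_{1,h}>0$. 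This is the same mechanism by which the vector Crouzeix--Raviart element (whose jumps also have vanishing edge means) fails the discrete Korn inequality, a classical fact due to Falk and discussed by Brenner (Math.\ Comp.\ 2004) and Mardal--Winther (Math.\ Comp.\ 2006). In the correct form of Brenner's inequality the edge penalty is $h_e^{-1}\|\pi_e\jump{\uv}\|_{0,e}^2$ with $\pi_e$ the $L^2(e)$-projection onto traces of rigid motions (equivalently, by Mardal--Winther's observation one needs $\int_e\jump{\uv}\cdot\mathbf{m}\,ds=0$ for every rigid motion $\mathbf{m}$), and the zeroth moments you verified do not make this projection vanish: the linear-in-$s$ normal part of a rigid-motion trace is not controlled by them.

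The gap is repairable with a fact you state but do not exploit: since $\uV{}_{hD}\subset\uH(\dv,\Omega)$ and the normal trace is a quadratic on $e$ matched through three moments, $\jump{\uv\cdot\mathbf{n}_e}=0$ holds \emph{pointwise}. Along a straight edge a rigid motion $\mathbf{m}$ has constant tangential trace and linear normal trace, so $\int_e\jump{\uv}\cdot\mathbf{m}\,ds=\int_e\jump{\uv\cdot\mathbf{n}_e}\,(\mathbf{m}\cdot\mathbf{n}_e)\,ds+(\mathbf{m}\cdot\mathbf{t}_e)\int_e\jump{\uv\cdot\mathbf{t}_e}\,ds=0$, the first term pointwise and the second by the mean tangential continuity; the same computation on edges of $\Gamma_D$, together with your (correct) elimination of global rigid motions via the boundary conditions, then yields \eqref{eq:Korn's ineq} from Brenner's inequality. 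Note that even after this repair your route differs genuinely from the paper's, which follows Kouhia--Stenberg: it writes $\epsilon(\uv)=\nabla\uv-\frac12\rot\,\uv\,\chi$, constructs an auxiliary enriched Bernardi--Raugel space $\uC{}_{h}^{N}$ with an inf-sup condition against $Q_h$ (Lemma \ref{lem:Fortin operator for BR element}), and exploits the orthogonality $\sum_{T}\int_T\nabla\uv:\curl\,\uz\ud T=0$ (Lemma \ref{lem:rhs eqs 0}) --- whose case-by-case edge verification uses exactly the same two continuity properties that your corrected jump check needs.
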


\subsection{Proof of inf-sup conditions} 
Note that the the commutativity $\dv\,\Pi_h\uw=P_{Q_{h*}}\dv\,\uw$ does not hold for all $\uw\in \uH{}^1_0(\Omega)$, where $P_{Q_{h*}}$ represents the $L^2$ projection onto $Q_{h*}$. To prove the inf-sup conditions~\eqref{eq:disc inf-sup} and \eqref{eq:disc inf-sup Vh0}, we adopt the macroelement technique by Stenberg~\cite{Stenberg1990technique} We postpone the proof of Theorem \ref{thm:inf-sup mixed boundary} after some technical preparations.

\subsubsection{Stenberg's macroelement technique}

A macroelement is a connected set of at least two cells in $\mathcal{T}_{h}$. And a macroelement partition of $\mathcal{T}_{h}$, denoted by $\mathcal{M}_{h}$, is a set of macroelements such that each triangle in 
of $\mathcal{T}_{h}$ is covered by at least one macroelement in $\mathcal{M}_{h}$.
\begin{definition}
{\rm
Two macroelements $M_{1}$ and $M_{2}$ are said to be equivalent if there exists a continuous one-to-one mapping $G: \ M_{1} \rightarrow M_{2}$, such that
\begin{itemize}
\item[(a)] $G(M_{1}) = M_{2};$
\item[(b)] if $M_{1} = \cup_{i = 1}^{m}T_{i}^{1}$, then $T_{i}^{2} = G(T_{i}^{1})$ with $i = 1:m$ are the cells of $M_{2}$.
\item[(c)] $G|_{T_{i}^{1}} = F_{T_{i}^{2}}\circ F_{T_{i}^{1}}^{-1}$, $i = 1:m$, where  $F_{T_{i}^{1}}$ and $F_{T_{i}^{2}}$ are the mappings from a reference element $\hat{T}$ onto $T_{i}^{1}$ and $T_{i}^{2}$, respectively.
\end{itemize}
}
\end{definition}
A class of equivalent macroelements is a set of all the macroelements which are equivalent to each other. Given a macroelement $M$, we denote
\begin{align*}
\uV{}_{h0,M}: = \uV{}_{h0}(M), \quad Q_{h,M}: = Q_{h}(M), \quad \mbox{and} \quad Q_{h*,M}: = Q_{h*}(M).
\end{align*}
And we denote
\begin{align}\label{eq:spurious pressure}
N_{M} := \Big\{q_{h}\in Q_{h,M}: \ \int_{M} \dv\,\uv{}_{h} \ q_{h} \ud M = 0,\ \forall \uv{}_{h} \in \uV{}_{h0,M}\Big\}.
\end{align}
Stenberg's macroelement technique can be summarized as the following proposition.

\begin{proposition}{\cite[Theorem 3.1]{Stenberg1990technique}}\label{pro:macro stable tech}
Suppose there exist a macroelement partitioning $\mathcal{M}_{h}$ with a fixed set of equivalence classes $\mathbb{E}_{i}$ of macroelements, $i = 1,\ 2,\ \ldots,\ n$, a positive integer $N$ ($n$ and $N$ are independent of $h$), and an operator $\Pi: \uH{}^1_0(\Omega)\rightarrow \uV{}_{h0}$, such that
\begin{itemize}
\item[$(C_{1})$] for each $M\in \mathbb{E}_{i}$, $i = 1,\ 2,\ \ldots,\ n$, the space $N_{M}$ defined in~\eqref{eq:spurious pressure} is one-dimensional, which consists of functions that are constant on $M$;
\item[$(C_{2})$] each $M\in \mathcal{M}_{h}$ belongs to one of the classes $\mathbb{E}_{i}$, $i = 1,\ 2,\ \ldots,\ n$;
\item[$(C_{3})$] each $e\in \mathcal{E}_{h}^{i}$ is an interior edge of at least one and no more than $N$ macroelements;
\item[$(C_{4})$] for any $\uw\in \uH{}^1_0(\Omega)$, it holds that 
\begin{align*}
\sum\limits_{T\in \mathcal{T}_{h}} h_{T}^{-2}\|\uw-\Pi \uw\|_{0,T}^{2} + \sum\limits_{e\in \mathcal{E}_{h}^{i}} h_{e}^{-1}\|\uw-\Pi \uw\|_{0,e}^{2} \lesssim \|\uw\|_{1,\Omega}^{2}  \quad {and } \quad \|\Pi \uw\|_{1,h} \lesssim  \|\uw\|_{1,\Omega}.
\end{align*}
\end{itemize}
Then the stability~\eqref{eq:disc inf-sup Vh0} is valid.
\end{proposition}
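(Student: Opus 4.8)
The plan is to reduce the global inf-sup \eqref{eq:disc inf-sup Vh0} to a quantitative \emph{local} condition on each macroelement, and then to patch the local estimates into a global test field. Fix $q\in Q_{h*}$; the goal is to build $\uv{}_h\in\uV{}_{h0}$ with $\int_\Omega\dv\,\uv{}_h\,q\,\ud\Omega\gtrsim\|q\|_{0,\Omega}^2$ and $\|\uv{}_h\|_{1,h}\lesssim\|q\|_{0,\Omega}$. For $M\in\mathcal{M}_h$ let $\bar q_M$ denote the mean of $q$ on $M$, and set the oscillation seminorm $|q|_{\rm osc}^2:=\sum_{M\in\mathcal{M}_h}\|q-\bar q_M\|_{0,M}^2$. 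I would split $\|q\|_{0,\Omega}$ into this oscillation part, handled locally via $(C_1)$–$(C_3)$, and a complementary ``mean'' part, handled globally via the operator $\Pi$ of $(C_4)$.

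First I would turn the kernel statement $(C_1)$ into a \emph{uniform local inf-sup}. On a reference macroelement $\hat M$ for a class $\mathbb{E}_i$, the form $(\hat{\uv},\hat q)\mapsto\int_{\hat M}\dv\,\hat{\uv}\,\hat q$ acts on the finite-dimensional $\uV{}_{h0,\hat M}\times Q_{h,\hat M}$, and since by $(C_1)$ its right kernel $N_{\hat M}$ is exactly the constants, finite dimensionality and a closed-range argument give $\|\hat q-\overline{\hat q}\|_{0,\hat M}\lesssim\sup_{\hat{\uv}}\int_{\hat M}\dv\,\hat{\uv}\,\hat q/\|\hat{\uv}\|_{1,\hat M}$. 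Because each $M\in\mathbb{E}_i$ is the image of $\hat M$ under the map $G$ built from element maps, and the contravariant Piola transform preserves $\uH(\dv)$, the divergence, and the moments defining $\uV{}_{h0,M}$, the form and the norm $\|\cdot\|_{1,M}$ scale homogeneously; with the shape-regularity \eqref{eq:regularity} this transfers the bound to every $M$, and by the finiteness $(C_2)$ I obtain one constant $\beta_0>0$ with $\|q-\bar q_M\|_{0,M}\lesssim\sup_{\uv{}_h\in\uV{}_{h0,M}}\int_M\dv\,\uv{}_h\,q/\|\uv{}_h\|_{1,M}$ for all $M$. Choosing a maximizer normalized so that $\int_M\dv\,\uv{}_{h,M}\,q=\|q-\bar q_M\|_{0,M}^2$ and $\|\uv{}_{h,M}\|_{1,M}\lesssim\|q-\bar q_M\|_{0,M}$ (legitimate since $\int_M\dv\,\uv{}_{h,M}=0$ lets me replace $q$ by $q-\bar q_M$), and extending by zero (members of $\uV{}_{h0,M}$ vanish on $\partial M$, so the extension lies in $\uV{}_{h0}$), the sum $\uv{}_h^{\rm osc}:=\sum_M\uv{}_{h,M}$ obeys, by the finite overlap $(C_3)$, $\int_\Omega\dv\,\uv{}_h^{\rm osc}\,q=|q|_{\rm osc}^2$ and $\|\uv{}_h^{\rm osc}\|_{1,h}\lesssim|q|_{\rm osc}$.

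Next I would control the complementary part with $\Pi$. Since $q\in L^2_0(\Omega)$, the continuous inf-sup for $\dv:\uH{}^1_0(\Omega)\to L^2_0(\Omega)$ yields $\uw\in\uH{}^1_0(\Omega)$ with $\dv\,\uw=q$ and $\|\uw\|_{1,\Omega}\lesssim\|q\|_{0,\Omega}$. With $\uv{}_h^{\rm glob}:=\Pi\uw$ one has $\int_\Omega\dv\,\uv{}_h^{\rm glob}\,q=\|q\|_{0,\Omega}^2-\int_\Omega\dv\,(\uw-\Pi\uw)\,q$. Integrating by parts cell-by-cell, and using that $\uw-\Pi\uw$ has continuous normal trace (so only the jumps survive) while both terms vanish on $\partial\Omega$, gives
\[
\int_\Omega\dv\,(\uw-\Pi\uw)\,q=-\sum_{T\in\mathcal{T}_h}\int_T(\uw-\Pi\uw)\cdot\nabla q+\sum_{e\in\mathcal{E}_h^i}\int_e(\uw-\Pi\uw)\cdot\mathbf{n}_e\,\jump{q}_e.
\]
I then bound both terms by Cauchy--Schwarz, using $(C_4)$ for the factors $(\sum_T h_T^{-2}\|\uw-\Pi\uw\|_{0,T}^2)^{1/2}$ and $(\sum_e h_e^{-1}\|\uw-\Pi\uw\|_{0,e}^2)^{1/2}$, both $\lesssim\|\uw\|_{1,\Omega}\lesssim\|q\|_{0,\Omega}$. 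For the pressure factors, since $q$ is piecewise linear, a scaled Poincar\'e estimate gives $h_T\|\nabla q\|_{0,T}\lesssim\|q-\bar q_M\|_{0,T}$, and a trace estimate gives $h_e^{1/2}\|\jump{q}_e\|_{0,e}\lesssim\|q-\bar q_M\|_{0,M}$ whenever $e$ is interior to some $M$ containing the two adjacent cells, which is guaranteed by $(C_3)$; summing with finite overlap bounds both by $|q|_{\rm osc}$. Hence $|\int_\Omega\dv\,(\uw-\Pi\uw)\,q|\lesssim\|q\|_{0,\Omega}\,|q|_{\rm osc}$, so $\int_\Omega\dv\,\uv{}_h^{\rm glob}\,q\ge\|q\|_{0,\Omega}^2-C\|q\|_{0,\Omega}\,|q|_{\rm osc}$ with $\|\uv{}_h^{\rm glob}\|_{1,h}\lesssim\|q\|_{0,\Omega}$.

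Finally I would combine the two fields: taking $\uv{}_h:=\uv{}_h^{\rm glob}+\delta\,\uv{}_h^{\rm osc}$ and applying Young's inequality to the cross term,
\[
\int_\Omega\dv\,\uv{}_h\,q\ge\|q\|_{0,\Omega}^2-C\|q\|_{0,\Omega}\,|q|_{\rm osc}+\delta\,|q|_{\rm osc}^2\ge\tfrac12\|q\|_{0,\Omega}^2
\]
once $\delta>C^2/2$, while $\|\uv{}_h\|_{1,h}\lesssim\|q\|_{0,\Omega}+\delta\,|q|_{\rm osc}\lesssim\|q\|_{0,\Omega}$ since $|q|_{\rm osc}\le\|q\|_{0,\Omega}$; dividing yields \eqref{eq:disc inf-sup Vh0}. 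I expect the main obstacle to be the first step, namely upgrading the qualitative kernel condition $(C_1)$ to a local inf-sup that is \emph{uniform} across each equivalence class: positivity on a fixed reference macroelement is routine finite-dimensional linear algebra, but transferring it with an $h$-independent constant requires verifying that the divergence form, the moments cutting out $\uV{}_{h0,M}$, and the broken $H^1$ norm all scale compatibly under the Piola-type map $G$, with the residual distortion absorbed by $\gamma_0$. The overlap bookkeeping from $(C_3)$ and the pressure-jump scaling are the remaining points that need care.
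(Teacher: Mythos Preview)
The paper does not prove this proposition; it is quoted verbatim as \cite[Theorem~3.1]{Stenberg1990technique} and used as a black box. Your proposal is essentially the standard proof of Stenberg's result: split $q$ into a macroelement-oscillation part controlled by the local inf-sup (derived from $(C_1)$ on a reference macroelement and transferred via the equivalence maps and $(C_2)$), and a global part controlled by $\Pi\uw$ with $\dv\,\uw=q$, the residual being estimated after elementwise integration by parts using $(C_4)$ and $(C_3)$. This is correct and matches Stenberg's original argument.

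Two minor points to tighten. First, the bound $h_T\|\nabla q\|_{0,T}\lesssim\|q-\bar q_M\|_{0,T}$ is an \emph{inverse} inequality applied to the polynomial $q-\bar q_M$, not a Poincar\'e inequality; the result is the same. Second, the finite-overlap step you invoke to bound $\|\uv{}_h^{\rm osc}\|_{1,h}$ requires that each \emph{cell} lies in a bounded number of macroelements, while $(C_3)$ is phrased for interior \emph{edges}; you should note that any macroelement containing $T$ must, being connected with at least two cells, contain at least one edge of $T$ as an interior edge, whence the cell-overlap bound follows from $(C_3)$ and shape regularity.
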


\subsubsection{Technical lemmas}
In general, the main difficulty to design a stable mixed element stems from $(C_{1})$. We use the specific type of macroelements as below.
\begin{definition}\label{def:macroelement}{\rm
A macroelement, denoted by $M$, being a union of the $m$ cells that share exactly one common vertex in the interior of the macroelement, is called an {\bf ${\mathbf m}$-cell vertex-centred macroelement}, and {\bf${\mathbf m}$-macroelement} for short.
}\end{definition}

The set of interior edges and cells of $M$ are denoted by $\{e_{i}\}_{i = 1:m}$ and $\{T_{i}\}_{i = 1:m}$, respectively. Denote the lengths of interior edges by $\{d_{i}\}_{i=1:m}$ and the areas of cells by $\{S_{i}\}_{i = 1:m}$. Figure~\ref{fig:6 cell macroelement} gives an illustration of a {\bf${\mathbf 6}$-macroelement}. 

\begin{figure}[htbp]
\centerline{\includegraphics[width=2.35in]
{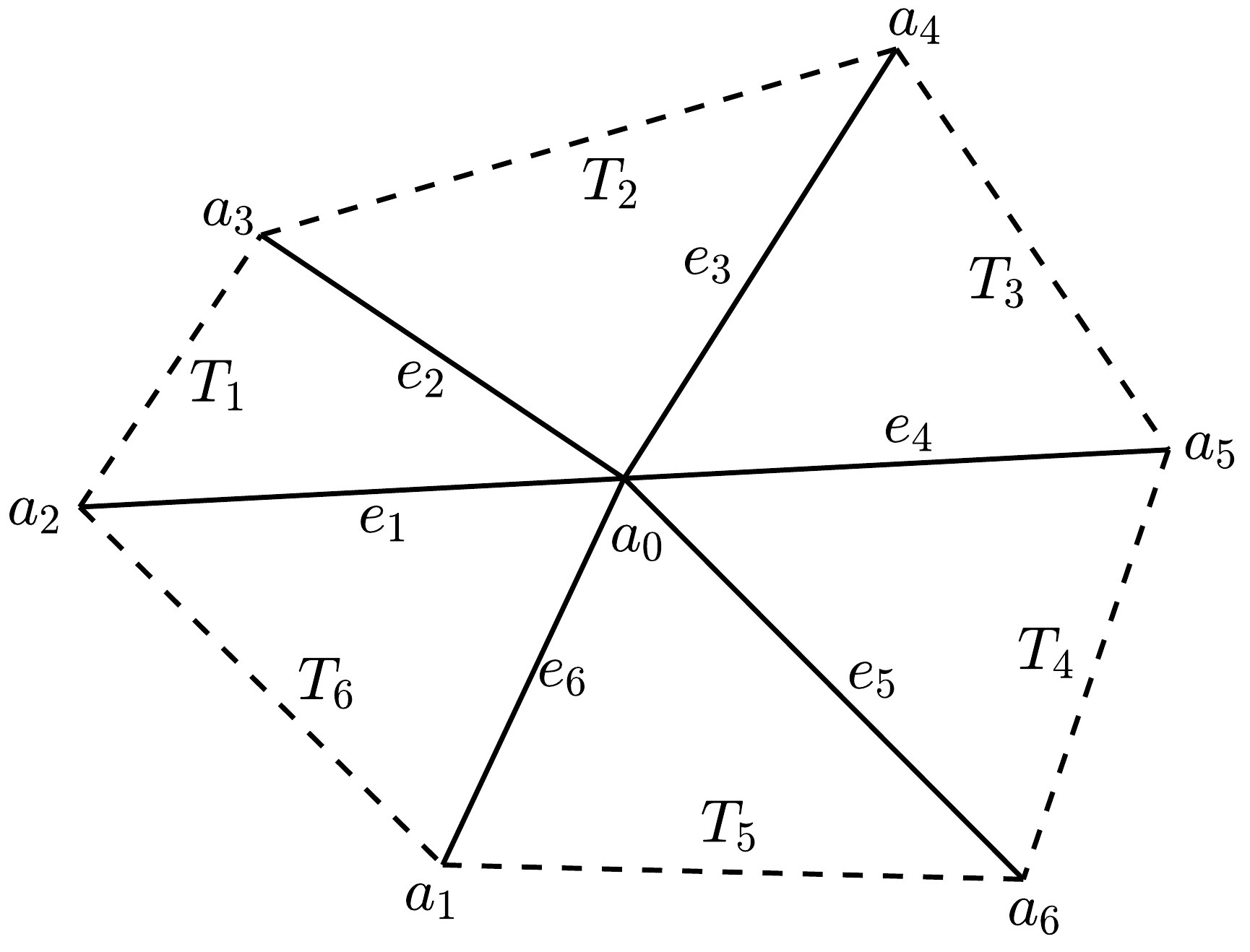}}
\caption{A macroelement composed of six cells with one interior vertex}\label{fig:6 cell macroelement}
\end{figure}

Below are concrete definitions of some local defined spaces on $M$ introduced in the previous context.
\begin{align*}
\uV{}_{h0,M} := \Big\{\uv{}_{h} \in \uL{}^{2}(M):\ \uv{}_{h}|_{T}&  \in (P_{2}(T))^{2},   \forall T \subset M, \  \uv{}_{h}\cdot \mathbf{n}_{e}  \mbox{ and }  \fint_{e} \uv{}_{h}\cdot \mathbf{t}_{e}  \ud s   \mbox{ are }\\
& \mbox{continuous across  interior edges and vanish on } \partial M \Big\},
\end{align*}
\begin{align*}
& Q_{h,M} = \Big\{q_{h}\in L^{2}(M): q_{h}|_{T} \in P_{1}(T), \forall T \in M\Big\},\\
& Q_{h*,M} = \Big\{q_{h}\in Q_{h,M}: \int_{M} q_{h} \ud M = 0\Big\}.
\end{align*}

For an {\bf${\mathbf m}$-macroelement} $M$, denote by ${\rm ker}(\dv, \uV{}_{h0,M}) := \Big\{\uv{}_{h} \in \uV{}_{h0,M}: \dv\,\uv{}_{h}  = 0\Big\}$ and ${\rm Im}\big(\dv, \uV{}_{h0,M}\big)= \dv(\uV{}_{h0,M})$. The main technical issue is the following lemma.
\begin{lemma}\label{lem:localkernel}
It holds that ${\rm dim} \big({\rm ker}(\dv, \uV{}_{h0,M})\big) \leqslant m+1$.
\end{lemma}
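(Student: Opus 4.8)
## Proof Proposal

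The plan is to compute the dimension of $\mathrm{ker}(\dv, \uV{}_{h0,M})$ directly by counting degrees of freedom and then accounting for the constraints imposed by the divergence-free condition together with the boundary and continuity requirements. First I would set up the local dimension count. On the $m$-macroelement $M$, there are $m$ interior edges $\{e_i\}$ and $m$ boundary edges; each triangle carries a local $(P_2(T))^2$ space of dimension $12$, and the global DOFs of $\uV{}_{h0,M}$ are governed by the normal-component moments (three per edge) and the tangential average (one per edge). Since functions in $\uV{}_{h0,M}$ vanish on $\partial M$, the boundary edges contribute nothing, so the free DOFs live on the $m$ interior edges: three normal moments and one tangential average per interior edge, giving $4m$ candidate degrees of freedom. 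The $\uH(\dv)$-conformity is already built into the space by the continuity of $\uv{}_h\cdot\mathbf{n}_e$, so I would regard these $4m$ numbers as the coordinates parametrizing $\uV{}_{h0,M}$.

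Next I would impose $\dv\,\uv{}_h = 0$ cellwise. On each triangle $T_i$, $\dv\,\uv{}_h|_{T_i} \in P_1(T_i)$, a three-dimensional space, so the constraint $\dv\,\uv{}_h|_{T_i}=0$ is \emph{a priori} three linear conditions per cell, i.e. $3m$ conditions in total. However these conditions are not independent: by the divergence theorem, $\int_{T_i}\dv\,\uv{}_h\,\ud T = \int_{\partial T_i}\uv{}_h\cdot\mathbf{n}\,\ud s$, and summing the zero-mean ($P_0$) component of the divergence over all cells telescopes against the vanishing boundary data, producing a global compatibility relation. So I would carefully identify how many of the $3m$ conditions are genuinely independent. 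The clean way to organize this is through the local discretized Stokes complex structure alluded to in the introduction: the range $\mathrm{Im}(\dv,\uV{}_{h0,M}) = \dv(\uV{}_{h0,M})$ sits inside $Q_{h,M}$, and by rank–nullity,
\begin{equation*}
\dim\big(\mathrm{ker}(\dv,\uV{}_{h0,M})\big) = \dim \uV{}_{h0,M} - \dim\,\mathrm{Im}(\dv,\uV{}_{h0,M}).
\end{equation*}
With $\dim \uV{}_{h0,M} = 4m$, the lemma is equivalent to the lower bound $\dim\,\mathrm{Im}(\dv,\uV{}_{h0,M}) \geqslant 3m-1$; that is, the divergence must nearly surject onto the $3m$-dimensional space $Q_{h,M}$, missing at most the single constant direction removed by the global mean-zero compatibility.

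The main obstacle will be establishing that lower bound on the image, i.e. exhibiting enough functions in $\uV{}_{h0,M}$ whose divergences span a $(3m-1)$-dimensional subspace of $Q_{h,M}$. I would split this into the $P_0$ (cellwise-constant) part and the genuinely linear part of the divergence. For the linear part, I expect the quadratic normal bubbles $\uphi{}_{\mathbf{n}_{T,e_i},1}$, $\uphi{}_{\mathbf{n}_{T,e_i},2}$ and the tangential bubble $\uphi{}_{\mathbf{t}_{T,e_i},0}$ to generate divergences with nontrivial linear content on the two adjacent cells, and I would show these are linearly independent across the $m$ interior edges. For the constant part, the divergence-theorem identity shows that the cell-average of $\dv\,\uv{}_h$ on $T_i$ is controlled by the net flux $\uv{}_h\cdot\mathbf{n}$ through the two interior edges bounding $T_i$; arranging fluxes around the central vertex lets me realize any vector of cell-averages summing to zero, which is exactly the codimension-one constant constraint. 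Tracking the precise interaction between the normal and tangential DOFs through the explicit basis \eqref{eq:basis on T}, and verifying that no unexpected cancellations drop the image below $3m-1$, is the delicate part; this is presumably where the explicit coefficients ($3$, $6$, $30$, the factors $(\mathbf{n}_i,\mathbf{t}_k)$) in the basis have been tuned, and it is the computation I would expect to occupy the bulk of the rigorous argument.
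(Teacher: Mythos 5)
Your rank--nullity reformulation is correct as linear algebra: since $\dim\uV{}_{h0,M}=4m$ and ${\rm Im}(\dv,\uV{}_{h0,M})\subset Q_{h*,M}$, the claimed bound $\dim\big({\rm ker}(\dv,\uV{}_{h0,M})\big)\leqslant m+1$ is equivalent to $\dim\,{\rm Im}(\dv,\uV{}_{h0,M})\geqslant 3m-1$. But notice that this is exactly the bridge the paper crosses in the \emph{opposite} direction: Lemma~\ref{lem:conds C1} uses the kernel bound of Lemma~\ref{lem:localkernel} to deduce surjectivity of $\dv$ onto $Q_{h*,M}$, whereas you propose to prove surjectivity directly and recover the kernel bound as a corollary. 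Logically admissible, but it means your plan must independently establish the very statement the lemma exists to supply, and here lies the genuine gap: the lower bound $\dim\,{\rm Im}\geqslant 3m-1$ is deferred wholesale to ``the computation I would expect to occupy the bulk of the rigorous argument.'' The piece you do sketch soundly is the constant block: prescribing the fluxes $f_i=\int_{e_i}\uv{}_h\cdot\mathbf{n}_{e_i}$ gives $\int_{T_i}\dv\,\uv{}_h\ud T=f_{i+1}-f_i$, which telescopes around the cycle and realizes exactly the codimension-one space of cellwise integrals summing to zero. The linear block is not sound as stated: asserting that the divergences of $\uphi{}_{\mathbf{n}_{T,e_i},1}$, $\uphi{}_{\mathbf{n}_{T,e_i},2}$, $\uphi{}_{\mathbf{t}_{T,e_i},0}$ across the $m$ interior edges are independent ``with no unexpected cancellations'' is precisely the statement that $N_M$ contains no spurious pressure mode, i.e., condition $(C_1)$ itself --- which the paper singles out as ``the main difficulty.'' Such cancellations do occur for other pairs (checkerboard modes for low-order equal-degree pairs; the singular-vertex kernel inflation for Scott--Vogelius), and whether they occur here depends on the patch geometry, since each bubble's divergence couples two adjacent cells and mixes constant and linear content; you give no mechanism to exclude a nonzero $q_h\in Q_{h*,M}$ orthogonal to all these divergences uniformly over all shapes of {\bf$m$-macroelement}.

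For comparison, the paper supplies that missing substance constructively from the kernel side: explicit local divergence-free bases $\uw{}_{T,e_j}$, $\uw{}_{T,e_j,e_k}$, $\uw{}_{T,e_k,e_j}$, $\uw{}_{T,a_i}$ on a single triangle with DOFs annihilated on prescribed edges; rigidity on chains of two and three cells (Lemmas~\ref{lem:two cell 2 dim} and~\ref{lem:three cells dim 0}); the one-dimensional ``atom function'' on four consecutive cells, \eqref{eq:Four-Cell ker represent}; the sweeping argument of Lemma~\ref{lem:dim m} showing that the zero-flux kernel $\uZ{}_{h0}^{\mathbf{n}}(M)$ is spanned by the $m$ atoms; and finally the divergence-theorem observation that all interior fluxes of a kernel function coincide, adding at most one further dimension. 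To complete your route you would need work of comparable weight on the image side --- e.g., computing the divergences of the edge bubbles in barycentric coordinates and exhibiting, uniformly over admissible geometries, a rank-$(3m-1)$ minor of the resulting $4m\times 3m$ matrix, or equivalently characterizing the annihilator $N_M$ directly. As it stands, your proposal is a correct reduction wrapped around an unproven core, not a proof.
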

We postpone the technical proof of Lemma \ref{lem:localkernel} to Appendix~\ref{sec:app}.
\begin{lemma}\label{lem:conds C1}
Let $M$ be an {\bf${\mathbf m}$-macroelement}. Then $N_{M}$ is a one-dimensional space consisting of constant functions on $M$. 
\end{lemma}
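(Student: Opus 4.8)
The plan is to determine $\dim N_{M}$ exactly through a dimension count, with the kernel bound of Lemma~\ref{lem:localkernel} supplying the decisive inequality. First I would record the easy inclusion that every function constant on $M$ lies in $N_{M}$: for $q_{h}\equiv c$ and any $\uv{}_{h}\in\uV{}_{h0,M}$, the normal trace of $\uv{}_{h}$ is continuous across interior edges and vanishes on $\partial M$, so the divergence theorem applied cell by cell (the interior-edge contributions cancelling) gives $\sum_{T\subset M}\int_{T}\dv\,\uv{}_{h}\,c\ud T = c\int_{\partial M}\uv{}_{h}\cdot\mathbf{n}\ud s = 0$. Hence $\dim N_{M}\geqslant 1$, and it remains only to prove $\dim N_{M}\leqslant 1$.

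The central observation is that $N_{M}$ is precisely the $L^{2}(M)$-orthogonal complement of ${\rm Im}\big(\dv,\uV{}_{h0,M}\big)$ inside $Q_{h,M}$; since all spaces are finite-dimensional, this yields $\dim N_{M} = \dim Q_{h,M} - \dim {\rm Im}\big(\dv,\uV{}_{h0,M}\big)$. Applying the rank--nullity theorem to $\dv:\uV{}_{h0,M}\to Q_{h,M}$ then gives
\[
\dim N_{M} = \dim Q_{h,M} - \dim \uV{}_{h0,M} + \dim{\rm ker}\big(\dv,\uV{}_{h0,M}\big).
\]

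Next I would evaluate the two elementary dimensions. Since $Q_{h,M}$ is the discontinuous piecewise-$P_{1}$ space over the $m$ cells, $\dim Q_{h,M}=3m$. For $\uV{}_{h0,M}$ I count the surviving edge-based degrees of freedom: the element carries three normal moments and one tangential average on each edge; on each of the $m$ boundary edges all four are annihilated by the homogeneous conditions, while on each of the $m$ interior edges the full normal trace ($3$ DOFs) and the tangential average ($1$ DOF) are single-valued. Because the interior and boundary edges are disjoint and impose independent constraints, and because by $P_{T}$-unisolvence the restriction to each cell is recovered from its edge data, this gives $\dim\uV{}_{h0,M}=4m$. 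Substituting, $\dim N_{M} = 3m-4m+\dim{\rm ker}\big(\dv,\uV{}_{h0,M}\big) = \dim{\rm ker}\big(\dv,\uV{}_{h0,M}\big)-m$.

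Finally, Lemma~\ref{lem:localkernel} bounds $\dim{\rm ker}\big(\dv,\uV{}_{h0,M}\big)\leqslant m+1$, whence $\dim N_{M}\leqslant 1$. Together with $\dim N_{M}\geqslant 1$ this forces $\dim N_{M}=1$, and since the constants already span a one-dimensional subspace of $N_{M}$, the space $N_{M}$ consists exactly of the functions constant on $M$. The main obstacle is the bookkeeping for $\dim\uV{}_{h0,M}$: one must use unisolvence of the local triple to recover each cell function from its edge DOFs and keep careful track that only the \emph{average} of the tangential component—not its full profile—is shared across interior edges, so that no spurious dependency inflates or deflates the count. The genuinely hard ingredient, the kernel estimate of Lemma~\ref{lem:localkernel}, is deferred to Appendix~\ref{sec:app}.
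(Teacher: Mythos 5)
Your proof is correct and follows essentially the same route as the paper: both arguments rest on the counts $\dim \uV{}_{h0,M}=4m$ and $\dim Q_{h,M}=3m$, rank--nullity for $\dv$, and the kernel bound $\dim\big({\rm ker}(\dv,\uV{}_{h0,M})\big)\leqslant m+1$ from Lemma~\ref{lem:localkernel}. The only difference is packaging: the paper first deduces surjectivity ${\rm Im}(\dv,\uV{}_{h0,M})=Q_{h*,M}$ and then splits $q_{h}$ into its mean and mean-zero parts, whereas you compute $\dim N_{M}$ directly as the codimension of the divergence image and check separately, via the divergence theorem, that constants lie in $N_{M}$ --- a cosmetic rearrangement of the identical argument.
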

\begin{proof}
First we show ${\rm Im}\big(\dv, \uV{}_{h0,M}\big)= Q_{h*,M}$.  As 
${\rm dim}\uV{}_{h0,M} = {\rm dim} \big({\rm ker}(\dv, \uV{}_{h0,M})\big) + {\rm dim} \big({\rm Im}(\dv, \uV{}_{h0,M})\big),$ we obtain ${\rm dim}\Big({\rm Im}\big(\dv, \uV{}_{h0,M}\big)\Big) \geqslant 3m-1$, where we have utilized ${\rm dim}\uV{}_{h0,M} = 4m$ by definition and ${\rm dim} \big({\rm ker}(\dv, \uV{}_{h0,M})\big) \leqslant m+1$ by Lemma~\ref{lem:localkernel}. From ${\rm dim} \big(Q_{h*,M}\big) = 3m-1$ and ${\rm Im}\big(\dv, \uV{}_{h0,M}\big) \subset  Q_{h*,M}$, we derive ${\rm Im}\big(\dv, \uV{}_{h0,M}\big)= Q_{h*,M}$. Then, for any $q_{h}\in N_{M}$, it holds that $\int_{M}p^{*}_{h}\ q_{h} \ud M = 0,$ for any $p^{*}_{h}\in Q_{h*,M}. $ Let $q_{h} = \overline{q}_{h} + q_{h}^{*}$, where $\overline{q}_{h} = \fint_{M}q_{h} \ud M$ and $q_{h}^{*}\in Q_{h*,M}$. Then $\int_{M}p^{*}_{h}\ q_{h}^{*} \ud M = 0, \ \forall p^{*}_{h}\in Q_{M,*}$, which yields $q_{h}^{*} = 0$ and hence $q_{h}= \overline{q}_{h}$. Therefore, $N_{M}$ is a one-dimensional space consisting of constant functions on $M$.  
\end{proof}

\begin{lemma}\label{lem:conds C2 C3 C4}
Let $\{\mathcal{T}_{h}\}$ be a family of  triangulations satisfying {\bf Assumption~A}. Each macroelement in $\mathcal{M}_{h}$ has one interior vertex. Then conditions $C_{2}$,  $C_{3}$, and $C_{4}$ in Theorem~\ref{pro:macro stable tech} are satisfied.
\end{lemma}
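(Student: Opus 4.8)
The plan is to verify the three structural conditions $(C_2)$, $(C_3)$, and $(C_4)$ of Proposition~\ref{pro:macro stable tech} directly for the cover $\mathcal{M}_{h}$ consisting of one vertex-centred macroelement $M_{v}$ for each interior vertex $v\in\mathcal{N}_{h}^{i}$, where $M_{v}$ is the closed fan of all triangles having $v$ as a vertex.

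First, Assumption~A guarantees that every $T\in\mathcal{T}_{h}$ has at least one interior vertex $v$, whence $T\subset M_{v}$, so that $\{M_{v}\}_{v\in\mathcal{N}_{h}^{i}}$ is indeed a macroelement partition in the required covering sense. Each $M_{v}$ is an $m$-macroelement with $m$ the number of cells meeting at $v$. The regularity~\eqref{eq:regularity} bounds the minimal angle of every cell from below by a constant depending only on $\gamma_{0}$, and hence bounds the valence by some $m_{\max}=m_{\max}(\gamma_{0})$, so that $3\leqslant m\leqslant m_{\max}$. For each fixed $m$ any two $m$-macroelements are combinatorially identical closed fans, and one constructs the map $G$ of the equivalence definition cell by cell, composing reference maps with matching vertex labels around the fan; thus all $m$-macroelements form a single class $\mathbb{E}_{m}$. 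There are therefore only $n=m_{\max}-2$ classes, a number independent of $h$, and each $M\in\mathcal{M}_{h}$ lies in one of them, which is $(C_2)$.

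Next, for $(C_3)$, the interior edges of a fan are exactly its spokes, so an interior edge $e$ is an interior edge of $M_{v}$ precisely when $v$ is an endpoint of $e$ with $v\in\mathcal{N}_{h}^{i}$. As $e$ has two endpoints, it is an interior edge of at most two macroelements, which gives the upper bound with $N=2$. The lower bound requires that each interior edge meet at least one interior vertex, so as to be a spoke of some $M_{v}$; this is the one genuinely geometric point, where Assumption~A must be used to exclude, or otherwise absorb, interior edges both of whose endpoints lie on $\partial\Omega$. I expect this lower bound to be the main obstacle of the lemma.

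Finally, $(C_4)$ is essentially a restatement of Proposition~\ref{pro:interpolation} for $\Pi=\Pi_{h}$. The volumetric term follows from \eqref{eq:interpolation error} with $k=0$, $s=1$ applied cellwise, $\|\uw-\Pi_{h}\uw\|_{0,T}\lesssim h_{T}|\uw|_{1,T}$, so that $\sum_{T}h_{T}^{-2}\|\uw-\Pi_{h}\uw\|_{0,T}^{2}\lesssim|\uw|_{1,\Omega}^{2}$. The edge term is obtained from the scaled trace inequality $\|\uw-\Pi_{h}\uw\|_{0,e}^{2}\lesssim h_{T}^{-1}\|\uw-\Pi_{h}\uw\|_{0,T}^{2}+h_{T}|\uw-\Pi_{h}\uw|_{1,T}^{2}$ together with \eqref{eq:interpolation error} for $k=0,1$ and $s=1$; multiplying by $h_{e}^{-1}$, using $h_{e}\cequiv h_{T}$, and summing yields the bound by $|\uw|_{1,\Omega}^{2}$. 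The stability $\|\Pi_{h}\uw\|_{1,h}\lesssim\|\uw\|_{1,\Omega}$ then follows from the triangle inequality and the same estimates. One should also check that $\Pi_{h}$ maps $\uH{}^{1}_{0}(\Omega)$ into $\uV{}_{h0}$, i.e.\ that its edge degrees of freedom vanish on $\partial\Omega$, which is immediate from the definition of $\Pi_{h}$ on boundary edges; hence $(C_4)$ is routine and the weight of the lemma rests on the incidence bookkeeping of $(C_2)$ and $(C_3)$.
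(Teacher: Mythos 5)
Your handling of $(C_2)$ and $(C_4)$ coincides with the paper's proof, which is even terser: the paper obtains $(C_2)$ from \textbf{Assumption~A} together with the shape regularity~\eqref{eq:regularity} (bounded valence, hence a fixed finite number of equivalence classes of fans), and $(C_4)$ by citing Proposition~\ref{pro:interpolation} and the trace theorem; the details you supply (cellwise use of~\eqref{eq:interpolation error}, the scaled trace inequality with $h_e$ comparable to $h_T$, and the check that the edge degrees of freedom of $\Pi_h$ vanish on $\partial\Omega$) are correct and are exactly what the paper leaves implicit. Your upper bound $N=2$ in $(C_3)$ is also the paper's argument verbatim.

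The gap is precisely where you flagged it: you never establish the lower bound in $(C_3)$, i.e.\ that every $e\in\mathcal{E}_h^i$ is interior to at least one vertex-centred macroelement, and a proof cannot end with ``I expect this to be the main obstacle.'' The paper dispatches this step with the bare assertion ``if $e\in\mathcal{E}_h^i$, then at least one endpoint of $e$ is an interior vertex,'' but your suspicion is well founded, since this does not follow from \textbf{Assumption~A} alone. Take a convex hexagon with boundary vertices $b_1,\dots,b_6$, place $c_1$ inside the quadrilateral $b_1b_2b_3b_4$ and $c_2$ inside $b_4b_5b_6b_1$, and triangulate by the two fans around $c_1$ and $c_2$: every triangle has $c_1$ or $c_2$ as a vertex, so \textbf{Assumption~A} holds, yet the chord $b_1b_4$ is an interior edge both of whose endpoints lie on $\partial\Omega$; it lies on the rim of both stars $M_{c_1}$ and $M_{c_2}$ and is interior to no macroelement of $\mathcal{M}_h$, so $(C_3)$ fails for this partition. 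Analogous configurations (a chord cutting a corner, with an extra interior vertex between chord and corner) can be arranged at every mesh size without losing shape regularity. To close the argument one must either strengthen \textbf{Assumption~A} to ``every interior edge has at least one interior endpoint'' --- which is evidently what the paper's one-line claim implicitly uses --- or enlarge the partition by macroelements formed as unions of two adjacent stars and then re-verify $(C_1)$ for the new equivalence classes. Since your proposal carries out neither, the lemma is not proved as written, although your identification of the difficulty is more candid than the paper's own treatment of it.
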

\begin{proof}
From the {\bf Assumption A} and the regularity~\eqref{eq:regularity} of $\mathcal{T}_{h}$, there exists a generic constant $n$, independent of $h$, such that condition $C_{2}$ holds.  If $e\in \mathcal{E}_{h}^{i}$, then at least one endpoint of $e$ is an interior vertex. Hence $e$ is an interior edge of at least one macroelement of $\mathcal{M}_{h}$. On the other hand, $e$ is interior to at most two macroelements, which occurs if both endpoints of $e$ are interior in $\Omega$. Therefore, condition $C_{3}$ also holds. By Proposition~\ref{pro:interpolation} and the well-known trace theorem~(see, e.g.,~\cite[Theorem 1.6.6]{Brenner;Scott2002}),  condition $C_{4}$ can be obtained. \end{proof}

\subsubsection{Proof of Theorem \ref{thm:inf-sup mixed boundary}}
By Lemma~\ref{lem:conds C1}, Lemma~\ref{lem:conds C2 C3 C4}, and Theorem~\ref{pro:macro stable tech}, it holds that $\uV{}_{h0}\times Q_{h*}$ satisfies the inf-sup condition~\eqref{eq:disc inf-sup Vh0}. 
The inf-sup stability of $\uV{}_{hD}\times Q_{h}$ is proved utilizing the technique introduced in~\cite{Kouhia;Stenberg1995}  by the following four steps.

\noindent{\bf Step 1.} 
Given $q_{h} \in Q_{h}$, let $\overline{q}_{h} = \frac{1}{|\Omega|}\int_{\Omega}q_{h}\ud \Omega$ and $q^{*}_{h} = q_{h}-\overline{q}_{h}$. Then $q^{*}_{h}\in L_{0}^{2}(\Omega)$ and 
\begin{align}\label{eq:orth decom of q}
\|q_{h}\|_{0,\Omega}^{2} = \|\overline{q}_{h}\|_{0,\Omega}^{2}+ \|q^{*}_{h}\|_{0,\Omega}^{2}.
\end{align}

\noindent{\bf Step 2.}
By~\eqref{eq:disc inf-sup Vh0}, there exists some $\uv^{*} \in \uV{}_{h0}$, such that 
\begin{align}\label{eq:step2}
(\dv\,\uv{}_{h}^{*}, q_{h}^{*})  \geqslant C_{1} \|q_{h}^{*}\|_{0,\Omega}^{2} \quad \mbox{and} \quad |\uv{}_{h}^{*}|_{1,h} = \|q_{h}^{*}\|_{0,\Omega},
\end{align}
where $C_{1}>0$ is a generic constant independent of $h$.

\noindent{\bf Step 3.}
Let $\Gamma_{N} : = \partial\Omega \backslash \Gamma_{D}$. Notice that $\overline{q}_{h}$ is constant in $\Omega$. Let $\overline{\uv}{}_{h} \in \uV{}_{hD}$ satisfy that $\overline{\uv}{}_{h} \cdot \mathbf{n_{e}} = C_{0}\overline{q}_{h}|_{e}$ for any $e \subset \Gamma_{N}$, and other degrees of freedom vanish. The value of $C_{0}$ is chosen such that $|\overline{\uv}{}_{h}|_{1,h} = \|\overline{q}_{h}\|_{0,\Omega}$. Then it holds with $C_{2}:=C_{0}\frac{|\Gamma_{N}|}{|\Omega|}$ that
\begin{equation}\label{eq:step3}
(\dv\,\overline{\uv}{}_{h}, \overline{q}_{h}) = \sum_{T \in \mathcal{T}_{h}}\int_{T}\dv\,\overline{\uv}{}_{h} \,\overline{q}_{h} \ud T   = \sum_{e\in \Gamma_{N}} \int_{e} \overline{\uv}{}_{h} \cdot \mathbf{n} \, \overline{q}_{h} \ud s   = C_{2}\|\overline{q}_{h}\|_{0,\Omega}^{2}.
\end{equation}

\noindent{\bf Step 4.} Let $\uv{}_{h} = \uv{}_{h}^{*}+\kappa\overline{\uv}{}_{h}$ with $\kappa = \frac{2C_{1}C_{2}}{(C_{2})^{2}+2}$. By the Schwarz inequality,  the elementary inequality, and~\eqref{eq:orth decom of q}--\eqref{eq:step3}, we obtain
\begin{equation*}
\begin{split}
(\dv\,\uv{}_{h}, q_{h}) &= (\dv\,\uv{}_{h}^{*}, q_{h}^{*}) + \kappa(\dv\,\overline{\uv}{}_{h}, \overline{q}_{h}) + \kappa(\dv\,\overline{\uv}{}_{h}, q_{h}^{*})  \\
& \geqslant C_{1}\|q_{h}^{*}\|_{0,\Omega}^{2} + C_{2}\kappa\|\overline{q}_{h}\|_{0,\Omega}^{2} + \kappa(\dv\,\overline{\uv}{}_{h}, q_{h}^{*}) \\
& \geqslant C_{1}\|q_{h}^{*}\|_{0,\Omega}^{2} + C_{2}\kappa\|\overline{q}_{h}\|_{0,\Omega}^{2} -\sqrt{2} \kappa\Big( \frac{C_{2}}{2\sqrt{2}}|\overline{\uv}{}_{h}|_{1,h}^{2} + \frac{\sqrt{2}}{2C_{2}}\|q_{h}^{*}\|_{0,\Omega}^{2} \Big)\\
&=(C_{1} - \frac{\kappa}{C_{2}})\|q_{h}^{*}\|_{0,\Omega}^{2} + \frac{C_{2}\kappa}{2}\|\overline{q}_{h}\|_{0,\Omega}^{2} = \frac{C_{1}(C_{2})^{2}}{(C_{2})^{2}+2}\|q_{h}\|_{0,\Omega}^{2}. 
\end{split}
\end{equation*}
From $|\uv{}_{h}^{*}|_{1,h} = \|q_{h}^{*}\|_{0,\Omega}$,  $|\overline{\uv}{}_{h}|_{1,h} = \|\overline{q}_{h}\|_{0,\Omega}$, and the Poincar\'{e} inequality, we have
$
\|\uv{}_{h}\|_{1,h} \lesssim \|q_{h}\|_{0,\Omega}.
$
This completes the proof of \eqref{eq:disc inf-sup} and Theorem \ref{thm:inf-sup mixed boundary}.

\subsection{Proof of discrete Korn's inequality}

To verify the discrete Korn's inequality, we follow the lines of~\cite{Kouhia;Stenberg1995} and firstly introduce an auxiliary element scheme constructed by adding element bubble functions to the the standard Bernardi-Raugel element~\cite{Bernardi;Raugel1985}. Denote 
 $$
 \undertilde{P}{}_{T}: = \big(P_{1}(T)\big)^{2} \oplus {\rm span}\big\{\lambda_{2}\lambda_{3}\mathbf{n}_{1},\ \lambda_{3}\lambda_{1}\mathbf{n}_{2}, \lambda_{1}\lambda_{2}\mathbf{n}_{3}\big\} \oplus \big({\rm span}\big\{\lambda_{1}\lambda_{2}\lambda_{3}\big\}\big)^{2}.
 $$ 
 Define
\begin{align*}
\uC{}_{h}:= \Big\{\uz{}_{h} \in \uH{}^{1}(\Omega): \ \uz{}_{h}|_{T}\in \undertilde{P}{}_{T}, \forall T \in \mathcal{T}_{h},\ \uz{}_{h}(a)  \mbox{ is continuous at any } a\in \mathcal{N}_{h}^{i},  
\\
\mbox{and} \fint_{e}\uz{}_{h}\cdot \mathbf{n}_{e} \ud s \mbox{ is continuos across any } e\in \mathcal{E}_{h}^{i}\Big\},
\end{align*}
 and $\uC{}_{h}^{N}: = \Big\{\uz{}_{h}\in \uC{}_{h}: \ \uz{}_{h}(a) = 0, \ \forall a \in \Gamma_{N} \mbox{ and }  \fint_{e}\uz{}_{h}\cdot \mathbf{n}_{e} \ud s = 0, \ \forall e\subset \Gamma_{N} \Big\}$,
 where $\Gamma_{N} = \partial\Omega\backslash\Gamma_{D}$. Here we are concerned about the case of $\Gamma_{D} \ne \partial \Omega$, in which the discrete Korn's inequality plays a crucial role for outflow conditions.

\subsubsection{Technical lemmas}
\begin{lemma}\label{lem:Fortin operator for BR element}
The element pair $\uC{}_{h}^{N} \times Q_{h}$ satisfies the inf-sup condition
\begin{align}\label{eq:auxiliary spaces stable}
\sup_{ \uz{}_{h} \in \uC{}_{h}^{N}} \frac{\int_{\Omega}\dv\,\uz{}_{h} \, q_{h}\ud \Omega}{|\uz{}_{h}|_{1,h}} \gtrsim \|q_{h}\|_{0,\Omega}, \quad \forall q_{h} \in Q_{h}.
\end{align}
\end{lemma}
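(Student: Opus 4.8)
The plan is to verify the inf-sup condition by Fortin's criterion, splitting the pressure into its element-wise constant and element-wise mean-free linear parts and treating the two parts by different mechanisms before gluing them with the two-parameter argument already used in Steps 1--4 above. Given $q_h\in Q_h$, write $q_h=\bar q_h+\tilde q_h$, where $\bar q_h|_T:=\fint_T q_h\ud T$ is piecewise constant and $\tilde q_h:=q_h-\bar q_h$ is linear with zero mean on every cell; since the two parts are $L^2(T)$-orthogonal on each $T$, one has $\|q_h\|_{0,\Omega}^2=\|\bar q_h\|_{0,\Omega}^2+\|\tilde q_h\|_{0,\Omega}^2$.

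First I would treat the constant part $\bar q_h$ through the stability of the Bernardi--Raugel subspace of $\uC_h^N$, i.e. the subspace obtained by dropping the interior bubbles $(\mathrm{span}\{\lambda_1\lambda_2\lambda_3\})^2$. For this I would build the classical Bernardi--Raugel Fortin operator $\Pi^{\mathrm{BR}}=\Pi^{SZ}+$ (edge-normal-bubble correction): here $\Pi^{SZ}$ is a Scott--Zhang quasi-interpolation into the continuous $(P_1)^2$-component that is $H^1$-stable and preserves the vanishing trace on $\Gamma_N$, and to each edge one adds the multiple of the normal edge bubble $\lambda_j\lambda_k\mathbf{n}_e$ that restores $\fint_e(\uv-\Pi^{SZ}\uv)\cdot\mathbf{n}_e\ud s$. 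By construction $\int_e(\uv-\Pi^{\mathrm{BR}}\uv)\cdot\mathbf{n}_e\ud s=0$ on every edge, so $\int_\Omega\dv(\uv-\Pi^{\mathrm{BR}}\uv)\,\bar q_h\ud\Omega=0$ for every piecewise-constant $\bar q_h$ (every edge contribution, interior and on $\partial\Omega$, vanishes since the edge flux is matched and $\uv|_{\Gamma_N}=0$ is preserved), while $\|\Pi^{\mathrm{BR}}\uv\|_{1,h}\lesssim\|\uv\|_{1,\Omega}$. Combined with the continuous inf-sup for the divergence from $\{\uv\in\uH^1(\Omega):\uv|_{\Gamma_N}=0\}$ onto the full space $L^2(\Omega)$ --- which holds because $\Gamma_D$ carries positive measure, so the net flux through $\Gamma_D$ is unconstrained --- this yields a $\uz^{(0)}\in\uC_h^N$ with $\int_\Omega\dv\uz^{(0)}\,\bar q_h\ud\Omega\geqslant c_0\|\bar q_h\|_{0,\Omega}^2$ and $|\uz^{(0)}|_{1,h}\leqslant\|\bar q_h\|_{0,\Omega}$.

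Next I would absorb the mean-free linear part $\tilde q_h$ by a purely local construction using the interior cubic bubbles. On a single cell $T$, for any $\mathbf{c}\in\mathbb{R}^2$ integration by parts gives $\int_T\dv(\lambda_1\lambda_2\lambda_3\,\mathbf{c})\,\tilde q_h\ud T=-\big(\int_T\lambda_1\lambda_2\lambda_3\ud T\big)\,\mathbf{c}\cdot\nabla\tilde q_h$, because $\lambda_1\lambda_2\lambda_3$ vanishes on $\partial T$ and $\nabla\tilde q_h$ is constant; choosing $\mathbf{c}$ proportional to $-\nabla\tilde q_h$ and rescaling by $h_T^2$ produces a cell bubble whose divergence pairs with $\tilde q_h$ as $\gtrsim\|\tilde q_h\|_{0,T}^2$ while its seminorm is $\lesssim\|\tilde q_h\|_{0,T}$, as one checks by the standard scaling to the reference triangle. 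Summing over $T$ gives $\uz^{(1)}\in\uC_h^N$ (each bubble has zero trace, so all continuity and boundary constraints hold trivially) with $\int_\Omega\dv\uz^{(1)}\,\tilde q_h\ud\Omega\geqslant c_1\|\tilde q_h\|_{0,\Omega}^2$ and $|\uz^{(1)}|_{1,h}\leqslant\|\tilde q_h\|_{0,\Omega}$. The key bookkeeping point is that $\int_\Omega\dv\uz^{(1)}\,\bar q_h\ud\Omega=\sum_T\bar q_h|_T\int_{\partial T}\uz^{(1)}\cdot\mathbf{n}\ud s=0$, i.e. the interior bubbles are invisible to the constant pressure. Finally I would set $\uz_h=\uz^{(1)}+\kappa\uz^{(0)}$ and, exactly as in Steps 1--4, bound the single surviving cross term $\int_\Omega\dv\uz^{(0)}\,\tilde q_h\ud\Omega$ by $\sqrt2\,|\uz^{(0)}|_{1,h}\|\tilde q_h\|_{0,\Omega}$ and apply Young's inequality; choosing $\kappa$ small enough gives $\int_\Omega\dv\uz_h\,q_h\ud\Omega\gtrsim\|\tilde q_h\|_{0,\Omega}^2+\|\bar q_h\|_{0,\Omega}^2=\|q_h\|_{0,\Omega}^2$ together with $|\uz_h|_{1,h}\lesssim\|q_h\|_{0,\Omega}$, which is the claim.

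The step I expect to be the real obstacle is the first one: verifying the Bernardi--Raugel/piecewise-constant stability in the present mixed-boundary setting with the \emph{full} (not mean-free) constant pressure space. One must ensure that the continuous divergence-surjectivity onto all of $L^2(\Omega)$ is available (which forces $\Gamma_D$ to have positive measure) and that the Scott--Zhang ingredient of $\Pi^{\mathrm{BR}}$ both is $H^1$-stable and exactly preserves the homogeneous condition on $\Gamma_N$; the interior-bubble estimate and the final combination are routine scaling and the already-used Kouhia--Stenberg algebra.
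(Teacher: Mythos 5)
Your proposal is correct, and it proves the lemma by a genuinely different organization of the same two underlying mechanisms. The paper does not split the pressure at all: it constructs a single Fortin operator $\Pi_{C}:\uH{}_{N}^{1}(\Omega)\to \uC{}_{h}^{N}$ fixed by three groups of conditions --- vertex values taken from the local $L^{2}$-projection $R_{h}$ of Girault--Raviart, exact matching of the mean normal flux $\int_{e}(\Pi_{C}\,\uv-\uv)\cdot\mathbf{n}_{e}\ud s=0$ on every edge (which fixes the normal edge bubbles), and the first-moment conditions $\int_{T}x\,\dv\,(\Pi_{C}\,\uv-\uv)\ud T=\int_{T}y\,\dv\,(\Pi_{C}\,\uv-\uv)\ud T=0$ (which fix the interior vector bubble). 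Together these make $\dv\,(\Pi_{C}\,\uv-\uv)$ orthogonal to all of $P_{1}(T)$ cellwise --- edge fluxes kill the constants, the moment conditions kill the linear parts --- so $(\dv\,\Pi_{C}\,\uv,q_{h})=(\dv\,\uv,q_{h})$ holds \emph{exactly} for every $q_{h}\in Q_{h}$, and the lemma follows from the $H^{1}$-stability of $\Pi_{C}$ and Fortin's criterion with no splitting, no cross terms, and no Young's inequality. Your argument unbundles precisely these two mechanisms: the edge-flux matching becomes the classical Bernardi--Raugel operator tested against the cellwise-constant part $\bar q_{h}$, and the interior-bubble moments become your explicit local test function $\uz{}^{(1)}$ against the mean-free linear part $\tilde q_{h}$ (your computation $\int_{T}\dv(\lambda_{1}\lambda_{2}\lambda_{3}\,\mathbf{c})\,\tilde q_{h}\ud T=-\big(\int_{T}\lambda_{1}\lambda_{2}\lambda_{3}\ud T\big)\,\mathbf{c}\cdot\nabla\tilde q_{h}$ is exactly the dual pairing the paper's moment conditions encode, and your observation that the bubbles are invisible to $\bar q_{h}$ is automatic in the operator formulation). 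The price of your modular route is the cross-term bookkeeping and the small-$\kappa$ gluing; what it buys is that you only need to quote the standard Bernardi--Raugel result plus a one-line bubble scaling, and --- usefully --- it makes explicit the one genuinely global ingredient, the continuous inf-sup for $\big\{\uv\in\uH{}^{1}(\Omega):\uv|_{\Gamma_{N}}=0\big\}$ onto the full space $L^{2}(\Omega)$, which requires $\Gamma_{D}$ to have positive measure and which the paper leaves implicit in its citation of Fortin's propositions. Your closing caveat about the mixed-boundary setting and the Scott--Zhang operator preserving the homogeneous condition on $\Gamma_{N}$ is well placed, but both points are standard and do not constitute a gap.
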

\begin{proof}
Let $\uH{}_{N}^{1}(\Omega) : = \big\{\uv\in\uH{}^{1}(\Omega): \ \uv = 0\ \mbox{on} \ \Gamma_{N}\big\}.$
Define $\Pi_{C}: \ \uH{}_{N}^{1}(\Omega) \mapsto \uC{}_{h}^{N}$ by 
\begin{equation*}
\left\{
\begin{split}
& \quad \Pi_{C}\, \uv\, (a)  = R_{h}\,\uv\, (a), \quad \forall a \in \mathcal{N}_{h}, \\
 & \int_{e}(\Pi_{C}\,\uv - \uv)\cdot \mathbf{n}_{e}\ud s = 0, \quad \forall e\in \mathcal{E}_{h},\\
& \int_{T}x\ \dv\,(\Pi_{C}\,\uv - \uv)\ud T = 0, \quad \int_{T}y\ \dv\,(\Pi_{C}\,\uv - \uv)\ud T = 0, \quad \forall T\in \mathcal{T}_{h},
\end{split}
\right.
\end{equation*}
where $R_{h}$ represents the local $L^{2}$-projection given in~\cite[(A.53)--(A.54)]{Girault;Raviart1986}.
It can be verified directly that
$(\dv\,\Pi_{C}\,\uv, q_{h}) = (\dv\,\uv, q_{h})$ for any $q_{h}\in Q_{h}$, and $|\Pi_{C}\,\uv|_{1,h}\lesssim |\uv|_{1,h}$. Hence the stability~\eqref{eq:auxiliary spaces stable} is valid~\cite[Propositions 4.1 -- 4.2]{Fortin1977}.
\end{proof}
\begin{lemma}\label{lem:rhs eqs 0}
For any $\uv\in \uV{}_{hD}$ and $\uz \in \uC{}_{h}^{N}$, it holds that 
\begin{align}\label{eq:orthogonality}
\sum_{T \in \mathcal{T}_{h}}\int_{T}\nabla \uv : \curl\, \uz \ud T = 0.
\end{align}
\end{lemma}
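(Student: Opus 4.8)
The plan is to turn the elementwise pairing into a sum of edge integrals via integration by parts, and then annihilate each edge contribution using the degrees of freedom defining $\uV_{hD}$ together with the $\uH^1$-conformity of $\uC_h^N$. First I would record the pointwise identity $\nabla\uv:\curl\uz=\sum_i\nabla v_i\cdot\curl z_i$ and integrate by parts on each $T$. Since $\dv\,\curl z_i=0$, the volume term drops and only the boundary survives:
$$\int_T\nabla\uv:\curl\uz\ud T=\int_{\partial T}\uv\cdot\partial_{\mathbf t}\uz\ud s,$$
where $\partial_{\mathbf t}\uz$ denotes the tangential derivative of $\uz$ along $\partial T$; here one uses the two-dimensional identity $\curl z_i\cdot\mathbf n=\partial_{\mathbf t}z_i$.

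Next I would sum over $T\in\mathcal{T}_h$ and regroup the boundary integrals edge by edge. Because $\uz\in\uC_h^N\subset\uH^1(\Omega)$ is globally continuous, the tangential derivative $\partial_{\mathbf t_e}\uz$ is single-valued on each $e$; on an interior edge the two adjacent triangles induce opposite boundary orientations, so their contributions combine into $\int_e\jump{\uv}\cdot\partial_{\mathbf t_e}\uz\ud s$, while each boundary edge contributes a one-sided trace.

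The heart of the argument is a degree count on each edge. Decompose $\jump{\uv}=(\jump{\uv}\cdot\mathbf n_e)\mathbf n_e+(\jump{\uv}\cdot\mathbf t_e)\mathbf t_e$; the normal jump vanishes since $\uv\cdot\mathbf n_e$ is continuous, leaving only $(\jump{\uv}\cdot\mathbf t_e)\,\partial_{\mathbf t_e}(\uz\cdot\mathbf t_e)$. Restricted to an edge, the interior bubble $\lambda_1\lambda_2\lambda_3$ vanishes identically, and the single enrichment function $\lambda_j\lambda_k\mathbf n_i$ that is quadratic on that edge points in the direction $\mathbf n_e$, hence contributes nothing to $\uz\cdot\mathbf t_e$; therefore $\uz\cdot\mathbf t_e$ is affine along $e$ and $\partial_{\mathbf t_e}(\uz\cdot\mathbf t_e)$ is a constant. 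Pulling that constant out reduces the interior contribution to a multiple of $\int_e\jump{\uv}\cdot\mathbf t_e\ud s$, which vanishes because $\fint_e\uv\cdot\mathbf t_e$ is continuous. The same reduction disposes of boundary edges: on $\Gamma_D$ one uses $\uv\cdot\mathbf n_e\equiv0$ together with $\fint_e\uv\cdot\mathbf t_e=0$, whereas on $\Gamma_N$ the affine trace $\uz\cdot\mathbf t_e$ vanishes at both endpoints (as $\uz$ vanishes at the $\Gamma_N$ vertices), so $\partial_{\mathbf t_e}(\uz\cdot\mathbf t_e)=0$ outright. Collecting all edges yields the stated identity.

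I expect the main obstacle to be the bookkeeping in the edge-by-edge regrouping: fixing orientation conventions so that interior contributions genuinely assemble into jumps, and verifying in each case (interior, $\Gamma_D$, $\Gamma_N$) that the available continuity or vanishing of the tangential average of $\uv$ matches precisely the fact that $\partial_{\mathbf t_e}(\uz\cdot\mathbf t_e)$ is constant on the edge. The crucial structural input is that the quadratic enrichment of $\uC_h^N$ is purely normal and therefore invisible to the tangential trace, which is exactly what makes the constant-times-average cancellation succeed.
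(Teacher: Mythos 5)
Your overall route coincides with the paper's proof: elementwise integration by parts (using $\dv\,\curl z_i=0$ and $\curl z_i\cdot\mathbf n=\partial_{\mathbf t}z_i$) to reduce to edge terms, a normal/tangential splitting on each edge, the observation that the Bernardi--Raugel enrichment is purely normal on each edge so that $\uz\cdot\mathbf t_e$ is affine and $\partial_{\mathbf t_e}(\uz\cdot\mathbf t_e)$ is constant, and the constant-times-tangential-average cancellation via the continuity of $\fint_e\uv\cdot\mathbf t_e\ud s$. The paper organizes the cancellation triangle by triangle with $\ug{}_{T,e}=\nabla\uz\cdot\mathbf t_{T,e}$ rather than through jumps, an immaterial difference, and your treatment of interior edges and of $\Gamma_D$ is complete and correct.

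There is, however, a genuine gap on $\Gamma_N$. Your reduction to the single term $(\jump{\uv}\cdot\mathbf t_e)\,\partial_{\mathbf t_e}(\uz\cdot\mathbf t_e)$ rests on $\jump{\uv}\cdot\mathbf n_e=0$, which holds on interior edges (continuity of $\uv\cdot\mathbf n_e$) and on $\Gamma_D$ (where $\uv\cdot\mathbf n_e\equiv 0$ by the definition of $\uV{}_{hD}$), but fails on $\Gamma_N$: there the one-sided trace $\uv\cdot\mathbf n_e$ is a generic quadratic. So on a $\Gamma_N$ edge you must also annihilate $(\uv\cdot\mathbf n_e)\,\partial_{\mathbf t_e}(\uz\cdot\mathbf n_e)$, and your stated argument --- that $\uz$ vanishes at the $\Gamma_N$ vertices --- does not accomplish this: the vertex values kill only the affine part of $\uz\cdot\mathbf n_e$ along $e$, while the quadratic normal bubble $c\,\phi_{T,e}\,\mathbf n_e$ survives, with $\partial_{\mathbf t_e}(\uz\cdot\mathbf n_e)=c\,\phi_{T,e}'$ nonconstant; paired with an arbitrary quadratic $\uv\cdot\mathbf n_e$ this term is genuinely nonzero in general. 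The missing ingredient is the second constraint in the definition of $\uC{}_{h}^{N}$, namely $\fint_e\uz\cdot\mathbf n_e\ud s=0$ for every $e\subset\Gamma_N$, which forces $c=0$; combined with the vertex vanishing this yields $\uz|_e=\undertilde{0}$ on every $\Gamma_N$ edge --- exactly how the paper's Case~3 proceeds, concluding $\ug{}_{T,e}=\undertilde{0}$ there. With this one additional invocation your proof closes.
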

\begin{proof}
Let subscripts $``\cdot_1"$ and $``\cdot_2"$ represent the components of the vector in the $x$ and $y$ directions, respectively. Integration by parts and direct calculation lead to
\begin{multline}\label{eq:rhs zero}
\sum_{T \in \mathcal{T}_{h}}\int_{T}\nabla \uv : \curl\, \uz \ud T = \sum_{T \in \mathcal{T}_{h}}\sum_{e \subset \partial T}\int_{e} (v_{1}\nabla z_{1}\cdot \mathbf{t}_{T,e} + v_{2}\nabla z_{2}\cdot \mathbf{t}_{T,e})\ud s
\\
=\sum_{T \in \mathcal{T}_{h}}\sum_{e \subset \partial T}\int_{e} (\uv \cdot \mathbf{n}_{T,e})(\ug{}_{T,e}\cdot \mathbf{n}_{T,e}) + (\uv \cdot \mathbf{t}_{T,e})(\ug{}_{T,e}\cdot \mathbf{t}_{T,e}) \ud s,
\end{multline}
where $\ug{}_{T,e}: =\nabla\uz \cdot  \mathbf{t}_{T,e}$, $\mathbf{t}_{T,e}$ is the counter-clockwise unit tangent vector of $T$ on $e$ and $\mathbf{n}_{T,e}$ represents the unit outer normal vector. Notice that $\uz|_{e} \in \big(P_{1}\big)^{2}+ {\rm span}\big\{\phi_{T,e}\cdot \mathbf{n}_{T,e}\big\}$, and $\phi_{T,e}$ is the quadratic bubble function associated with $e$ in $T$, we derive that $\ug{}_{T,e} \cdot \mathbf{t}_{T,e}$ is constant on each $e\subset \partial T$. We check the right hand part of \eqref{eq:rhs zero} case by case.

\noindent{\bf Case 1.} For $e\in \mathcal{E}_{h}^{i}$ with $T_{1}\cap T_{2} = e$. Utilizing the continuity of $\uv \cdot \mathbf{n}$, $\mathbf{n}_{T_{1},e} = - \,\mathbf{n}_{T_{2},e}$, $\mathbf{t}_{T_{1},e} = - \, \mathbf{t}_{T_{2},e}$, and $\ug{}_{T_{1},e} = -\, \ug{}_{T_{2},e}$ by $\uz \in \uH{}^{1}(\Omega)$, we obtain
\begin{align*}
\int_{e}(\uv \cdot \mathbf{n}_{e,T_{1}})(\ug{}_{T_{1},e}\cdot \mathbf{n}_{e,T_{1}})  +\int_{e}(\uv \cdot \mathbf{n}_{T_{2},e})(\ug{}_{T_{2},e}\cdot \mathbf{n}_{T_{2},e})  
= \int_{e}\uv \cdot \mathbf{n}_{e,T_{1}}(\ug{}_{T_{1},e}\cdot \mathbf{n}_{T_{1},e} -\ug{}_{T_{1},e}\cdot \mathbf{n}_{T_{1},e} ) \ud s =0.
\end{align*}
At the same time, utilizing the continuity of $\int_{e} \uv \cdot \mathbf{t}\ud s $ across interior edges, and noticing that $\ug{}_{T_{1},e} \cdot \mathbf{t}_{T_{1},e} =  \ug{}_{T_{2},e} \cdot \mathbf{t}_{T_{2},e} = c$, where 
$c$ represent a constant on $e$, we have
\begin{align*}
\int_{e}(\uv \cdot \mathbf{t}_{T_{1},e})(\ug{}_{T_{1},e}\cdot \mathbf{t}_{T_{1},e}) +\int_{e}(\uv \cdot \mathbf{t}_{T_{2},e})(\ug{}_{T_{2},e}\cdot \mathbf{t}_{T_{2},e})
= \big(\ug{}_{T_{1},e}\cdot \mathbf{t}_{T_{1},e}\big) \Big( \int_{e}\uv \cdot \mathbf{t}_{T_{1},e} \ud s + \int_{e}\uv \cdot \mathbf{t}_{T_{2},e} \ud s\Big) \ud s= 0.
\end{align*}

\noindent{\bf Case 2.} For $e\subset \Gamma_{D}$, $\uv \cdot \mathbf{n}_{e} = \int_{e}\uv \cdot \mathbf{t}_{e} \ud s = 0$, and $\ug{}_{T,e} \cdot \mathbf{t}_{e}$ is constant on $e\subset T$. Therefore,
\begin{align*}
\int_{e} (\uv \cdot \mathbf{n}_{T,e})(\ug{}_{T,e}\cdot \mathbf{n}_{T,e}) + (\uv \cdot \mathbf{t}_{T,e})(\ug{}_{T,e}\cdot \mathbf{t}_{T,e}) \ud s = 0, \quad \forall e \subset \Gamma_{D}.
\end{align*}

\noindent{\bf Case 3.} For $e\subset \Gamma_{N}$, we have $\uz|_{e}= \undertilde{0}$ by definition. Hence $\ug{}_{T,e} = \undertilde{0}$ for $e\subset T$, and
\begin{align*}
\int_{e} (\uv \cdot \mathbf{n}_{T,e})(\ug{}_{T,e}\cdot \mathbf{n}_{T,e}) + (\uv \cdot \mathbf{t}_{T,e})(\ug{}_{T,e}\cdot \mathbf{t}_{T,e}) \ud s = 0, \quad \forall e \subset \Gamma_{N}.
\end{align*}
Namely, the right-hand-side of~\eqref{eq:rhs zero} equals to zero. The proof is completed. 
 \end{proof}

\subsubsection{Proof of Theorem \ref{thm:Korn's ineq}}
For any $\uv\in \uV{}_{hD}$, 
$\epsilon(\uv)|_{T} = (\nabla \uv - \frac{1}{2}\rot\,\uv \ \chi)|_{T}$,
where $\chi = \bigl(\begin{smallmatrix}
0 & -1\\
1 & 0
\end{smallmatrix} \bigr)$. From~\eqref{eq:auxiliary spaces stable} and $\rot\,\uv \in Q_{h}$, there exists some $\uz \in \uC{}_{h}^{N}$, such that
$$ 
\int_{\Omega} \dv\,\uz \, q \ud \Omega  = \sum_{T\in\mathcal{T}_{h}}\int_{T} \rot\,\uv \, q \ud T, \quad \forall q \in Q_{h} \quad\mbox{and}\quad
 |\uz|_{1,h}  \lesssim \|\rot\,\uv\|_{0,\Omega} \lesssim |\uv|_{1,h}. 
$$
Therefore, 
$
\|\nabla \uv - \curl\, \uz\|_{0,\Omega} \leqslant |\uv|_{1,h} + |\uz|_{1,h} \lesssim |v|_{1,h},
$
and 
\begin{multline*}
\sum_{T\in \mathcal{T}_{h}} \int_{T}\epsilon (\uv)
\,: \, (\nabla \uv - \curl\,\uz) \ud T  =  \sum_{T\in \mathcal{T}_{h}} \int_{T}(\nabla \uv - \frac{1}{2}\rot\,\uv \ \chi) \,: \, (\nabla \uv - \curl\,\uz) \ud T \\
 = \sum_{T\in \mathcal{T}_{h}} \int_{T}|\nabla \uv|^{2}\ud T - \frac{1}{2}\sum_{T\in \mathcal{T}_{h}} \int_{T} \rot\,\uv \,(\rot\,\uv - \dv\,\uz) \ud T  = \sum_{T\in \mathcal{T}_{h}} \int_{T}|\nabla \uv|^{2}\ud T = |\uv|_{1,h}^{2}.
\end{multline*}
Finally 
$\displaystyle
\big( \sum_{T\in \mathcal{T}_{h}} \int_{T} |\epsilon(\uv)|^{2} \ud T \big)^{\frac{1}{2}} \geqslant  \frac{\sum\limits_{T\in \mathcal{T}_{h}} \int_{T} \epsilon(\uv)\,:\, (\nabla \uv - \curl\, \uz) \ud T}{\|\nabla \uv - \curl\, \uz\|_{0,\Omega}} \gtrsim |v|_{1,h},
$ 
and the proof is completed.

\section{Application to Stokes problems}
\label{sec:Model problems}
\subsection{Application to the Stokes equations}
Consider the stationary Stokes system:
\begin{equation}\label{eq:Stokes eq}
\left\{
\begin{split}
-\varepsilon^{2} \Delta\,\uu  + \nabla\, p & = \uf  \quad \mbox{in} \ \Omega, \\
\dv\, \uu & =  g \quad \mbox{in} \ \Omega,  \\
\uu  & = 0, \quad \mbox{on} \ \Gamma_{D}.
\end{split}
\right.
\end{equation}
 For simplicity of presentation, we only consider the case of $\partial \Omega = \Gamma_{D}$ herein. Extensions to other boundary conditions follows directly.

 The discretization scheme of~\eqref{eq:Stokes eq} reads:
Find $(\uu{}_{h},p_{h})\in \uV{}_{h0}\times Q_{h*}$, 
such that
\begin{equation}\label{eq:discre Stokes eq}
\left\{
\begin{split} 
\varepsilon^{2}\big(\nabla_{h}\,\uu{}_{h}, \nabla_{h}\,\uv{}_{h}\big)  -( \dv\,\uv{}_{h}, p_{h}) & = \langle \uf,\uv{}_{h} \rangle \quad \forall \uv{}_{h}\in \uV{}_{h}, \\
(\dv\,\uu{}_{h}, q_{h} )& = \langle g,q_{h} \rangle \quad \forall q_{h}\in Q_{h}.
\end{split}
\right.
\end{equation}
Based on the discussions in Section \ref{sec:new element}, Brezzi's conditions can be easily verified, and \eqref{eq:discre Stokes eq} is uniformly well-posed with respect to $\varepsilon$ and $h$.

\begin{theorem}\label{thm:error estimates Stokes}
 Let $(\uu,p)$ and $(\uu{}_{h},p_{h})$ be the solutions of~\eqref{eq:Stokes eq} and \eqref{eq:discre Stokes eq}, respectively. The following estimates hold with $0 < r \leqslant 2$
\begin{align*}
 |\uu - \uu{}_{h}|_{1,h} & \lesssim h^{r}|\uu|_{r+1,\Omega} + h|\uu|_{2,\Omega}, \\
  \|p-p_{h}\|_{0,\Omega} & \lesssim h^{r}|p|_{r,\Omega} + \varepsilon^{2}\big(h^{r}|\uu|_{r+1,\Omega} + h|\uu|_{2,\Omega}\big). 
 \end{align*}
\end{theorem}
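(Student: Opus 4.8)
The plan is to treat~\eqref{eq:discre Stokes eq} as a nonconforming mixed discretization and to combine a Strang-type consistency analysis with the pressure-robust (exactly divergence-free) structure of the pair. First I would record well-posedness: the form $\varepsilon^{2}(\nabla_{h}\,\cdot,\nabla_{h}\,\cdot)$ is coercive on $\uV{}_{h0}$ in $|\cdot|_{1,h}$ (the discrete Korn's inequality of Theorem~\ref{thm:Korn's ineq} and a discrete Poincar\'e inequality are available should a symmetric-gradient form be preferred), and the inf-sup condition~\eqref{eq:disc inf-sup Vh0} holds, so Brezzi's theory gives a unique $(\uu{}_{h},p_{h})$. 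The core object is a consistency identity obtained by testing the first equation of~\eqref{eq:Stokes eq} against an arbitrary $\uv{}_{h}\in\uV{}_{h0}$: integrating $-\varepsilon^{2}\Delta\uu$ by parts cell by cell produces $\varepsilon^{2}(\nabla_{h}\uu,\nabla_{h}\uv{}_{h})$ plus the nonconformity functional $E_{h}(\uu,\uv{}_{h}):=\varepsilon^{2}\sum_{T}\int_{\partial T}(\partial_{\mathbf{n}}\uu)\cdot\uv{}_{h}\ud s$, while the $\nabla p$ term, handled through the global $\uH(\dv)$ integration by parts together with $\uv{}_{h}\cdot\mathbf{n}_{e}=0$ on $\partial\Omega$, collapses cleanly to $-(p,\dv\,\uv{}_{h})$ with no surviving edge contribution.

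The step I expect to be the crux is the estimate of $E_{h}(\uu,\uv{}_{h})$, since it is exactly here that the average-only tangential continuity of $\uV{}_{h0}$ enters and forces the first-order limitation. Rewriting $E_{h}$ as a sum over edges and using $\mathbf{n}_{T_{1},e}=-\mathbf{n}_{T_{2},e}$, I would reduce each interior-edge term to $\varepsilon^{2}\int_{e}(\partial_{\mathbf{n}}\uu\cdot\boldsymbol{t}_{e})\,\jump{\uv{}_{h}\cdot\boldsymbol{t}_{e}}\ud s$, the normal part dropping because $\uv{}_{h}\cdot\mathbf{n}_{e}$ is continuous so $\jump{\uv{}_{h}\cdot\mathbf{n}_{e}}=0$. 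As only $\fint_{e}\uv{}_{h}\cdot\boldsymbol{t}_{e}$ is continuous, the tangential jump merely has vanishing mean on $e$; I would therefore subtract the edge-average of $\partial_{\mathbf{n}}\uu\cdot\boldsymbol{t}_{e}$ and bound the remainder by a trace-plus-Poincar\'e scaling, $\|\partial_{\mathbf{n}}\uu\cdot\boldsymbol{t}_{e}-\overline{(\cdot)}\|_{0,e}\lesssim h_{T}^{1/2}|\uu|_{2,T}$ and $\|\jump{\uv{}_{h}\cdot\boldsymbol{t}_{e}}\|_{0,e}\lesssim h_{e}^{1/2}|\uv{}_{h}|_{1,\omega_{e}}$, which sums to $|E_{h}(\uu,\uv{}_{h})|\lesssim\varepsilon^{2}h\,|\uu|_{2,\Omega}\,|\uv{}_{h}|_{1,h}$. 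Boundary edges (here $\partial\Omega=\Gamma_{D}$) are handled identically using $\uv{}_{h}\cdot\mathbf{n}_{e}=0$ and $\fint_{e}\uv{}_{h}\cdot\boldsymbol{t}_{e}=0$. This single factor of $h$, rather than $h^{2}$, is precisely the source of the $h|\uu|_{2,\Omega}$ term in both estimates.

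For the velocity error I would exploit pressure robustness. Since the commuting relation $\dv\,\Pi_{h}\uw=P_{Q_{h}}\dv\,\uw$ fails, I would first correct the interpolant: the defect $\delta_{h}:=\dv\,\Pi_{h}\uu-P_{Q_{h}}\dv\,\uu$ lies in $Q_{h*}$, and because $\Pi_{h}$ reproduces the normal moments up to degree two the edge contributions cancel and only the interior-mean mismatch survives, giving $\|\delta_{h}\|_{0,\Omega}\lesssim h^{r+1}|\uu|_{r+1,\Omega}$, one order higher than the approximation error. Using~\eqref{eq:disc inf-sup Vh0} I would solve $\dv\,\uz{}_{h}=-\delta_{h}$ with $|\uz{}_{h}|_{1,h}\lesssim\|\delta_{h}\|_{0,\Omega}$ and set $\uw{}_{h}:=\Pi_{h}\uu+\uz{}_{h}$, so that $\dv\,\uw{}_{h}=P_{Q_{h}}g=\dv\,\uu{}_{h}$ and $\undertilde{e}{}_{h}:=\uw{}_{h}-\uu{}_{h}$ is exactly divergence-free. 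Combining the consistency identity with the discrete equation yields $\varepsilon^{2}(\nabla_{h}\undertilde{e}{}_{h},\nabla_{h}\uv{}_{h})=\varepsilon^{2}(\nabla_{h}(\uw{}_{h}-\uu),\nabla_{h}\uv{}_{h})+E_{h}(\uu,\uv{}_{h})+(p-p_{h},\dv\,\uv{}_{h})$; testing with $\uv{}_{h}=\undertilde{e}{}_{h}$ annihilates the pressure term, and with the consistency bound and the interpolation estimates of Proposition~\ref{pro:interpolation} I obtain $|\uu-\uu{}_{h}|_{1,h}\lesssim h^{r}|\uu|_{r+1,\Omega}+h|\uu|_{2,\Omega}$, with no dependence on $p$ or on $\varepsilon^{-2}$.

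Finally, for the pressure I would return to the same identity with general $\uv{}_{h}$ and invoke the inf-sup condition: $\|P_{Q_{h}}p-p_{h}\|_{0,\Omega}\lesssim\sup_{\uv{}_{h}}(\dv\,\uv{}_{h},p_{h}-P_{Q_{h}}p)/\|\uv{}_{h}\|_{1,h}$, and since $(\dv\,\uv{}_{h},P_{Q_{h}}p)=(\dv\,\uv{}_{h},p)$ the numerator equals $-\varepsilon^{2}(\nabla_{h}\undertilde{e}{}_{h},\nabla_{h}\uv{}_{h})+\varepsilon^{2}(\nabla_{h}(\uw{}_{h}-\uu),\nabla_{h}\uv{}_{h})+E_{h}(\uu,\uv{}_{h})$, bounded by $\varepsilon^{2}(h^{r}|\uu|_{r+1,\Omega}+h|\uu|_{2,\Omega})\|\uv{}_{h}\|_{1,h}$ by the velocity bound and the consistency estimate. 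Adding the pressure interpolation error $\|p-P_{Q_{h}}p\|_{0,\Omega}\lesssim h^{r}|p|_{r,\Omega}$ then produces the stated pressure estimate. The principal obstacle throughout is the consistency bound of the second paragraph; the failure of the commuting relation is a secondary technicality that, fortunately, turns out to contribute only at higher order.
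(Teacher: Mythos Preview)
Your proposal is correct and follows essentially the same route as the paper, only more explicitly: the paper simply invokes the abstract Strang-type estimates for conservative nonconforming pairs (citing Brezzi, Brezzi--Fortin, Crouzeix), namely
\[
|\uu-\uu{}_{h}|_{1,h}\lesssim \inf_{\uw{}_{h}\in\uV{}_{h}}|\uu-\uw{}_{h}|_{1,h}+\sup_{\uv{}_{h}\in\uZ{}_{h}(0)}\frac{|E_h(\uu,\uv{}_{h})|}{\varepsilon^{2}|\uv{}_{h}|_{1,h}},
\]
and the analogous pressure estimate, then bounds the consistency term by $\varepsilon^{2}h|\uu|_{2,\Omega}|\uv{}_{h}|_{1,h}$ exactly as you do (subtracting the edge mean, Crouzeix--Raviart style), and the approximation term by Proposition~\ref{pro:interpolation}. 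Your interpolation-correction argument via $\uz{}_{h}$ with $\dv\,\uz{}_{h}=-\delta_{h}$ is precisely one way of establishing that abstract bound; the paper absorbs it into the cited theory rather than writing it out.

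One small inaccuracy worth noting: your claim that $\|\delta_{h}\|_{0,\Omega}\lesssim h^{r+1}|\uu|_{r+1,\Omega}$ is too optimistic. Since $\int_{T}\delta_{h}\,q=-\nabla q\cdot\int_{T}(\Pi_{h}\uu-\uu)$ for $q\in P_{1}(T)$, taking $q=\delta_{h}$ and using $|\nabla\delta_{h}|\lesssim h_{T}^{-2}\|\delta_{h}\|_{0,T}$ together with $|\int_{T}(\Pi_{h}\uu-\uu)|\lesssim h_{T}^{r+2}|\uu|_{r+1,T}$ yields only $\|\delta_{h}\|_{0,T}\lesssim h_{T}^{r}|\uu|_{r+1,T}$. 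This is harmless, though, because $|\uz{}_{h}|_{1,h}\lesssim h^{r}|\uu|_{r+1,\Omega}$ is already of the same order as $|\Pi_{h}\uu-\uu|_{1,h}$ and does not degrade the final estimate.
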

\begin{proof}
Since the mixed element is inf-sup stable, divergence-free, and $\uv{}_{h}\cdot \mathbf{n}$ is continuous, the following estimates are standard~\cite{Brezzi1974,Brezzi;Fortin1991,Crouzeix1973}:
\begin{align*}
&|\uu - \uu{}_{h}|_{1,h}  \lesssim \inf_{\uw{}_{h}\in \uV{}_{h}} |\uu-\uw{}_{h}|_{1,h} + \sup_{\uv{}_{h}\in  \uZ{}_{h}(0)}\frac{\big|\sum\limits_{e \in \mathcal{E}_{h}} \varepsilon^{2} \int_{e} (\nabla \uu \cdot \mathbf{n}_{e} )\cdot \jump{\uv{}_{h}} \ud s\big|}{\varepsilon^{2}|\uv{}_{h}|_{1,h}}, 
\\
& \|p-p_{h}\|_{0,\Omega}  \lesssim \varepsilon^{2}|\uu-\uu{}_{h}|_{1,h} + \inf_{q_{h}\in Q_{h}}\|p-q_{h}\|_{0,\Omega} + \sup_{\uv{}_{h}\in  \uV{}_{h}}\frac{\big|\sum\limits_{e \in \mathcal{E}_{h}} \varepsilon^{2} \int_{e} (\nabla \uu \cdot \mathbf{n}_{e} )\cdot \jump{\uv{}_{h}} \ud s\big|}{|\uv{}_{h}|_{1,h}}. 
\end{align*}
The term $\inf_{\uw{}_{h}\in \uV{}_{h}} |\uu-\uw{}_{h}|_{1,h}$ is bounded by the interpolation error.  Since $\int_{e}\uv{}_{h}\ud s$ is continuous across interior edges and vanish on $\partial\Omega$, a standard estimate similar to that of the Crouzeix and Raviart element~\cite[Lemma~3]{Ciarlet1978} leads to 
\begin{align}\label{eq:consis error}
\big|\sum_{e \in \mathcal{E}_{h}} \varepsilon^{2} \int_{e} (\nabla \uu \cdot \mathbf{n}_{e} )\cdot \jump{\uv{}_{h}} \ud s\big| \lesssim \varepsilon^{2} h |\uu|_{2,\Omega}|\uv{}_{h}|_{1,h}.
\end{align}
Hence we derive 
$$
 |\uu - \uu{}_{h}|_{1,h}  \lesssim h^{r}|\uu|_{r+1,\Omega} + h|\uu|_{2,\Omega} \ \mbox{ with } \ 0 < r \leqslant 2. 
$$
The above estimates together with $ \inf\limits_{q_{h}\in Q_{h}}\|p-q_{h}\|_{0,\Omega}  \lesssim h^{r}|p|_{r,\Omega}$ lead to that
$$\|p-p_{h}\|_{0,\Omega}  \lesssim h^{r}|p|_{r,\Omega} + \varepsilon^{2}\big(h^{r}|\uu|_{r+1,\Omega} + h|\uu|_{2,\Omega}\big) \ \mbox{ with } \ 0<r\leqslant 2. $$
The proof is completed. 
\end{proof}

\subsection{Application to the Darcy--Stokes--Brinkman equations}
Consider the Darcy--Stokes--Brinkman equations: 
\begin{equation}\label{eq:BrinkmanEqs}
\left\{
\begin{split}
-\varepsilon^{2}\Delta\,\uu + \uu + \nabla p & = \uf \quad \mbox{in} \ \Omega, \\
\dv\,\uu & = g \quad \mbox{in} \ \Omega,\\
\uu\cdot \mathbf{n} = 0, \ \varepsilon\, \uu \cdot \mathbf{t} & = 0 \quad \mbox{on} \ \partial \Omega,
\end{split}
\right.
\end{equation}
where $\varepsilon \in (0,1]$ is a parameter. When $\varepsilon $ is not too small and $g = 0$, it is a Stokes problem with an additional lower order term. When $\varepsilon  = 0$, the first equation becomes the Darcy's law for porous medium flow. Most classic mixed elements fail to converge uniformly with respect to $\varepsilon$ when applied to~\eqref{eq:BrinkmanEqs}~\cite{Mardal;Tai;Winther2002}.

The discretization scheme of~\eqref{eq:BrinkmanEqs} reads:
Find $(\uu{}_{h},p_{h})\in \uV{}_{h0}\times Q_{h*}$, 
such that
\begin{equation}\label{eq:discre Brinkman eq}
\left\{
\begin{split} 
\varepsilon^{2}\big(\nabla_{h}\,\uu{}_{h}, \nabla_{h}\,\uv{}_{h}\big) + (\uu{}_{h}, \uv{}_{h}) -( \dv\,\uv{}_{h}, p_{h}) & = \langle \uf,\uv{}_{h} \rangle \quad \forall \uv{}_{h}\in \uV{}_{h}, \\
(\dv\,\uu{}_{h}, q_{h} )& = \langle g,q_{h} \rangle \quad \forall q_{h}\in Q_{h}.
\end{split}
\right.
\end{equation}

Since the finite element pair is stable and conservative, Brezzi's conditions can be easily verified for \eqref{eq:discre Brinkman eq}, and it is uniformly well-posed with respect to $\varepsilon$ and $h$, provided $\int_{\Omega}g \ud \Omega = 0$. Robust convergence can be obtained both for smooth continuous solutions and for the case that the effect of the $\varepsilon$-dependent boundary layers is taken into account later.  
\begin{theorem}\label{thm:error Brinkman smooth}
If $\uu \in \uH{}^{r+1}(\Omega)\cap \uH{}_{0}^{1}(\Omega)$ and $p\in H^{r}(\Omega)\cap L_{0}^{2}(\Omega)$ with $0<r \leqslant 2$, then
\begin{align}
& \|\dv\,\uu - \dv\,\uu{}_{h}\|_{0,\Omega}   \lesssim  h^{r}|\uu|_{r+1,\Omega}, \label{eq:error div vel Brinkman}
 \\
& \|\uu - \uu{}_{h}\|_{0,\Omega} + \varepsilon |\uu-\uu{}_{h}|_{1,h} \lesssim h^{r}(\varepsilon + h)|\uu|_{r+1,\Omega} + \varepsilon h|\uu|_{2,\Omega},\label{eq:error vel Brinkman} 
\\
& \|p - p_{h}\|_{0,\Omega}   \lesssim    h^{r}|p|_{r,\Omega}  + h^{r}(\varepsilon + h)|\uu|_{r+1,\Omega} + \varepsilon h|\uu|_{2,\Omega}.\label{eq:error pre Brinkman}
\end{align}
\end{theorem}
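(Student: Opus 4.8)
The plan is to establish the three estimates by combining a standard mixed-method error analysis (Brezzi's theory, which the paper already invokes for well-posedness) with the pressure-robustness afforded by the exactly divergence-free property of the pair, together with the interpolation and consistency estimates from Proposition~\ref{pro:interpolation} and the analysis already carried out for the Stokes problem in Theorem~\ref{thm:error estimates Stokes}. The key structural fact to exploit is that $\dv\,\uV{}_{h0}\subset Q_{h}$ and the scheme is conservative, so that the divergence error decouples cleanly from the velocity/pressure errors.

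\medskip

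\noindent\textbf{Step 1 (divergence error \eqref{eq:error div vel Brinkman}).} First I would test the second discrete equation against $q_h=P_{Q_h}\dv\,\uu - \dv\,\uu{}_h$, or more directly observe that since $\dv\,\uu{}_h\in Q_h$ and $(\dv\,\uu{}_h,q_h)=\langle g,q_h\rangle=(\dv\,\uu,q_h)$ for all $q_h\in Q_h$, the discrete divergence is exactly the $L^2$-projection of $\dv\,\uu=g$ onto $Q_h$. Hence $\|\dv\,\uu-\dv\,\uu{}_h\|_{0,\Omega}=\|\dv\,\uu-P_{Q_h}\dv\,\uu\|_{0,\Omega}$, which is controlled by the approximation property of $Q_h$ (piecewise $P_1$) applied to $g=\dv\,\uu$, giving $h^{r}|\uu|_{r+1,\Omega}$. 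This step is clean and uses only conservation plus polynomial approximation on $Q_h$.

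\medskip

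\noindent\textbf{Step 2 (energy estimate \eqref{eq:error vel Brinkman}).} Next I would set up the error equation for the bilinear form $a_h(\uu{}_h,\uv{}_h):=\varepsilon^2(\nabla_h\uu{}_h,\nabla_h\uv{}_h)+(\uu{}_h,\uv{}_h)$, which is coercive on $\ker(\dv,\uV{}_{h0})$ in the norm $\varepsilon|\cdot|_{1,h}+\|\cdot\|_{0,\Omega}$. Writing $\uu-\uu{}_h=(\uu-\Pi_h\uu)+(\Pi_h\uu-\uu{}_h)$ and testing against $\uv{}_h:=\Pi_h\uu-\uu{}_h\in\ker(\dv,\uV{}_{h0})$ (which lies in the kernel because the pair is exactly divergence-free and $\Pi_h$ commutes with $\dv$ on the kernel), the pressure term drops out by pressure-robustness. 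The residual splits into an approximation part, bounded using \eqref{eq:interpolation error} with both the $L^2$ and $H^1$ pieces of $\Pi_h$, and a consistency part coming from the nonconformity $\uV{}_{h0}\not\subset\uH{}^1$. For the latter I would reuse the Crouzeix--Raviart-type consistency bound \eqref{eq:consis error} already established, yielding the $\varepsilon h|\uu|_{2,\Omega}$ contribution; the $L^2$-approximation of the lower-order term $(\uu{}_h,\uv{}_h)$ supplies the $h^{r+1}|\uu|_{r+1,\Omega}$ piece inside $h^{r}(\varepsilon+h)|\uu|_{r+1,\Omega}$. Careful tracking of the $\varepsilon$-weights across the two terms of $a_h$ is what produces the stated $\varepsilon$-robust combination.

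\medskip

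\noindent\textbf{Step 3 (pressure estimate \eqref{eq:error pre Brinkman}).} Finally I would recover the pressure from the inf-sup condition \eqref{eq:disc inf-sup Vh0}: for $p_h-P_{Q_{h*}}p$ there exists $\uv{}_h\in\uV{}_{h0}$ with $|\uv{}_h|_{1,h}\lesssim 1$ realizing the supremum, and the first error equation expresses $(\dv\,\uv{}_h,p_h-P_{Q_{h*}}p)$ in terms of the already-controlled velocity errors plus the consistency term. This gives $\|p-p_h\|_{0,\Omega}\lesssim \inf_{q_h}\|p-q_h\|_{0,\Omega}+(\varepsilon^2|\uu-\uu{}_h|_{1,h}+\|\uu-\uu{}_h\|_{0,\Omega})+\text{consistency}$, and inserting the $Q_h$ best-approximation $h^{r}|p|_{r,\Omega}$ together with Step~2 yields \eqref{eq:error pre Brinkman}. \emph{The main obstacle} I anticipate is bookkeeping the $\varepsilon$-powers so that the estimate degenerates correctly as $\varepsilon\to 0$ (the Darcy limit): one must ensure the pressure bound does not acquire an inverse power of $\varepsilon$, which is precisely where pressure-robustness (the vanishing of the pressure term on the divergence-free test function) and the exact satisfaction of the divergence constraint are indispensable rather than merely convenient.
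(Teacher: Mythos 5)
Your Steps 1 and 3 coincide with the paper's argument: the paper likewise observes $\dv\,\uu{}_{h}={\rm P}_{Q_{h*}}(\dv\,\uu)$ to get \eqref{eq:error div vel Brinkman}, and recovers the pressure through the abstract bound \eqref{eq:p-ph Brinkman} via the inf-sup condition; your overall plan for Step 2 (kernel coercivity of $a_h$, pressure-robustness, interpolation estimates plus the Crouzeix--Raviart-type consistency bound \eqref{eq:consis error} rescaled by $\varepsilon$) is also exactly the content of the standard estimate \eqref{eq:u-uh Brinkman} that the paper cites. However, Step 2 as written rests on a false claim: you put $\uv{}_{h}:=\Pi_{h}\uu-\uu{}_{h}$ into $\ker(\dv,\uV{}_{h0})$ on the grounds that ``$\Pi_{h}$ commutes with $\dv$ on the kernel.'' For this element that commutativity fails, and the paper flags this explicitly at the start of its inf-sup section: the identity $\dv\,\Pi_h\uw=P_{Q_{h*}}\dv\,\uw$ does \emph{not} hold in general---which is precisely why the inf-sup condition is proved by Stenberg's macroelement technique rather than by a Fortin operator built from $\Pi_h$. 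Concretely, all degrees of freedom sit on edges, so for $q\in P_{1}(T)$ integration by parts gives
\begin{equation*}
\int_{T}\dv\,(\uv-\Pi_{h}\uv)\,q \ud T \;=\; -\int_{T}(\uv-\Pi_{h}\uv)\cdot\nabla q \ud T,
\end{equation*}
the boundary term vanishing because $\Pi_h$ preserves normal moments against $P_{2}(e)$; the remaining volume term has no reason to vanish (there are no interior DOFs), and it does not see $\dv\,\uv$ at all, so the obstruction persists even when $\uu$ is divergence-free. Only the piecewise-constant moments of the divergence are reproduced. Hence $\Pi_{h}\uu-\uu{}_{h}$ is in general not discretely divergence-free: the pressure term does not drop for that test function, and coercivity on the kernel does not apply, so your error equation breaks down at its first move.

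The gap is repairable by a standard device, which is what the paper's citation of \eqref{eq:u-uh Brinkman} encodes: replace $\Pi_{h}\uu$ by a comparison function $\uw{}_{h}\in\uZ{}_{h}(g):=\big\{\uv{}_{h}\in\uV{}_{h0}:(\dv\,\uv{}_{h},q_{h})=(g,q_{h})\ \forall q_{h}\in Q_{h*}\big\}$; the uniform discrete inf-sup condition \eqref{eq:disc inf-sup Vh0} lets one correct an arbitrary $\uw{}_{h}\in\uV{}_{h}$ into $\uZ{}_{h}(g)$ without losing the approximation order, so the infimum over the constrained set is comparable to the infimum over all of $\uV{}_{h}$. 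Then $\uw{}_{h}-\uu{}_{h}$ does lie in the discrete kernel, which---since $\dv\,\uV{}_{h0}\subset Q_{h}$---consists of \emph{exactly} divergence-free functions, so the pressure term genuinely vanishes and pressure-robustness is restored. With this substitution (or by simply quoting the abstract bounds \eqref{eq:u-uh Brinkman}--\eqref{eq:p-ph Brinkman}, as the paper does, and then estimating the approximation and consistency terms as in Theorem~\ref{thm:error estimates Stokes}), your $\varepsilon$-bookkeeping in Steps 2 and 3 goes through and reproduces the stated estimates.
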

\begin{proof}
Evidently, $\dv\,\uu{}_{h} = {\rm P}_{Q_{h*}}(\dv\,\uu)$, where ${\rm P}_{Q_{h*}}$ represents the $L^{2}$-projection into $Q_{h*}$. Therefore, the first inequality~\eqref{eq:error div vel Brinkman} follows from the estimation of the $L^{2}$-projection. For this conservative pair, the following estimates are standard (see, e.g.,~\cite{Brezzi;Fortin1991} and~\cite{Mardal;Tai;Winther2002}), 
\begin{multline}
\|\uu - \uu{}_{h}\|_{0,\Omega} + \varepsilon |\uu-\uu{}_{h}|_{1,h} \lesssim 
\\
\inf_{\uw{}_{h}\in \uV{}_{h}} (\|\uu - \uw{}_{h}\|_{0,\Omega} + \varepsilon |\uu-\uw{}_{h}|_{1,h} ) 
+ \sup_{\uv{}_{h}\in  \uZ{}_{h}(0)}\frac{\big|\sum\limits_{e \in \mathcal{E}_{h}} \varepsilon^{2} \int_{e} (\nabla \uu \cdot \mathbf{n}_{e} )\cdot \jump{\uv{}_{h}} \ud s\big|}{\varepsilon |\uv{}_{h}|_{1,h}},
\label{eq:u-uh Brinkman}
\end{multline}

\vskip -0.3cm

\begin{multline}
\|p-p_{h}\|_{0,\Omega}  \lesssim  \|\uu - \uu{}_{h}\|_{0,\Omega} + \varepsilon |\uu-\uu{}_{h}|_{1,h} 
\\
+\inf_{q_{h}\in Q_{h}}\|p-q_{h}\|_{0,\Omega} + \sup_{\uv{}_{h}\in  \uV{}_{h}}\frac{\big|\sum\limits_{e \in \mathcal{E}_{h}} \varepsilon^{2} \int_{e} (\nabla \uu \cdot \mathbf{n}_{e} )\cdot \jump{\uv{}_{h}} \ud s \big|}{\varepsilon |\uv{}_{h}|_{1,h}}.
\label{eq:p-ph Brinkman}
\end{multline}
Hence~\eqref{eq:error vel Brinkman} and~\eqref{eq:error pre Brinkman} are derived in a similar way as those in Theorem~\ref{thm:error estimates Stokes}.
\end{proof}

As is mentioned in~\cite{Mardal;Tai;Winther2002}, it may happen that $|\uu|_{2,\Omega}$ and $|\uu|_{3,\Omega}$ blow up as $\varepsilon$ tends to 0. In this case, the convergence
estimates given in Theorem~\ref{thm:error Brinkman smooth} will deteriorate, especially when
the solution of~\eqref{eq:BrinkmanEqs} has boundary layers. To derive a uniform convergence analysis of the discrete solutions, we assume that $\Omega$ is a convex polygon . Let $\big\{a_{j} : = (x_{j},y_{j})\big\}$ denote the set of corner nodes of $\Omega$. Define 
\begin{align*}
H_{+}^{1}(\Omega) : = \Big\{ g \in H^{1}(\Omega) \cap L_{0}^{2}(\Omega): \ \int_{\Omega}\frac{|g(x,y)|}{(x-x_{j})^{2} + (y-y_{j})^{2}} \ud \Omega < \infty, \ j = 1, 2, \ldots, l \,\Big\}, 
\end{align*}
with associated norm 
\begin{align*}
\|g\|_{1,+}^{2} : = \|g\|_{1,\Omega}^{2} + \sum_{j = 1}^{l} \int_{\Omega}\frac{|g(x,y)|}{(x-x_{j})^{2} + (y-y_{j})^{2}} \ud \Omega.
\end{align*}
Let $(\uu{}^{0},p^{0})$ solves~\eqref{eq:BrinkmanEqs} in the case of $\varepsilon = 0$. Then it is proved in~\cite{Mardal;Tai;Winther2002} that
\begin{equation}\label{eq:regularity Brinkman}
\begin{split}
\varepsilon^{2}\|\uu\|_{2,\Omega} + \varepsilon\|\uu\|_{1,\Omega} + \|\uu - \uu{}^{0}\|_{0,\Omega} + \|p - p^{0}\|_{1,\Omega}  + \varepsilon^{\frac{1}{2}}\|\uu{}^{0}\|_{1,\Omega} +  \varepsilon^{\frac{1}{2}}\|p^{0}\|_{1,\Omega}  \\
 \lesssim \varepsilon^{\frac{1}{2}} \big(\|\uf\|_{\rot} + \|g\|_{1,+}\big),
 \end{split}
\end{equation}
where $\|\cdot\|_{\rot} : = \|\cdot\|_{0,\Omega} + \|\rot(\cdot)\|_{0,\Omega}$ is the norm defined in $\uH{}(\rot,\Omega)$.
Following the technique in~\cite{Mardal;Tai;Winther2002}, we can obtain the following uniform convergence estimate.
\begin{theorem}\label{thm:uniform error Brinkman}
Let $(\uu,p)$ be the exact solution of~\eqref{eq:BrinkmanEqs} and $(\uu{}_{h},p_{h})$ be its approximation in $\uV{}_{h0}\times Q_{h*}$.
If $\uf \in \uH{}(\rot,\Omega)$ and $g \in H_{+}^{1}(\Omega)$, then 
\begin{align}
& \|\dv\,\uu - \dv\,\uu{}_{h}\| \lesssim h \|g\|_{1,\Omega}, \label{eq:uniform div Brinkman}\\ 
& \|\uu - \uu{}_{h}\|_{0,\Omega} + \varepsilon |\uu-\uu{}_{h}|_{1,h} \lesssim h^{\frac{1}{2}}(\|\uf\|_{\rot} + \|g\|_{1,+}), \label{eq:vel uniform Brinkman}
\\
&\|p-p_{h}\|_{0,\Omega}  \lesssim h^{\frac{1}{2}}(\|\uf\|_{\rot} + \|g\|_{1,+}).\label{eq:pre uniform Brinkman}
\end{align}
\end{theorem}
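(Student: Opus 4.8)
The plan is to prove the three estimates separately, deriving all of them from the quasi-optimal bounds \eqref{eq:u-uh Brinkman}--\eqref{eq:p-ph Brinkman} together with the $\varepsilon$-explicit regularity \eqref{eq:regularity Brinkman}; throughout I write $\mathcal{K}:=\|\uf\|_{\rot}+\|g\|_{1,+}$. The divergence bound \eqref{eq:uniform div Brinkman} is immediate: as noted at the start of the proof of Theorem~\ref{thm:error Brinkman smooth}, $\dv\,\uu_{h}={\rm P}_{Q_{h*}}(\dv\,\uu)={\rm P}_{Q_{h*}}g$, so $\|\dv\,\uu-\dv\,\uu_{h}\|_{0,\Omega}=\|g-{\rm P}_{Q_{h*}}g\|_{0,\Omega}\lesssim h|g|_{1,\Omega}$ by the approximation property of the $L^{2}$-projection onto $Q_{h*}\supset P_{1}$. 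For \eqref{eq:vel uniform Brinkman} and \eqref{eq:pre uniform Brinkman} the essential point is that the right-hand sides of \eqref{eq:u-uh Brinkman}--\eqref{eq:p-ph Brinkman} contain an interpolation (best-approximation) term and a consistency term, each of which naively carries negative powers of $\varepsilon$; the whole task is to show that both are in fact $\lesssim h^{1/2}\mathcal{K}$ uniformly in $\varepsilon$ once \eqref{eq:regularity Brinkman} is invoked.

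For the interpolation term I would take $\uw_{h}=\Pi_{h}\uu$ and split the velocity into its Darcy limit and its boundary layer, $\uu=\uu^{0}+(\uu-\uu^{0})$. The $L^{2}$-part of the best-approximation error is handled by applying the low-order estimate \eqref{eq:interpolation lower error} to the layer $\uu-\uu^{0}$, using $\|\uu-\uu^{0}\|_{0,\Omega}\lesssim\varepsilon^{1/2}\mathcal{K}$ and $\|\uu-\uu^{0}\|_{1,\Omega}\lesssim\varepsilon^{-1/2}\mathcal{K}$ from \eqref{eq:regularity Brinkman}, which yields $h^{1/2}\mathcal{K}$, while on the smooth part one uses $\|\uu^{0}-\Pi_{h}\uu^{0}\|_{0,\Omega}\lesssim h|\uu^{0}|_{1,\Omega}\lesssim h\mathcal{K}$. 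For the $\varepsilon$-weighted $H^{1}$-part I would instead interpolate the two bounds $|\uu-\Pi_{h}\uu|_{1,h}\lesssim|\uu|_{1,\Omega}$ and $|\uu-\Pi_{h}\uu|_{1,h}\lesssim h|\uu|_{2,\Omega}$ furnished by \eqref{eq:interpolation error} to obtain $|\uu-\Pi_{h}\uu|_{1,h}\lesssim h^{1/2}\|\uu\|_{1,\Omega}^{1/2}\|\uu\|_{2,\Omega}^{1/2}$; then $\varepsilon|\uu-\Pi_{h}\uu|_{1,h}\lesssim h^{1/2}(\varepsilon\|\uu\|_{1,\Omega})^{1/2}(\varepsilon\|\uu\|_{2,\Omega})^{1/2}\lesssim h^{1/2}\mathcal{K}$, where the last step uses $\varepsilon\|\uu\|_{1,\Omega}\lesssim\varepsilon^{1/2}\mathcal{K}$ and $\varepsilon\|\uu\|_{2,\Omega}=\varepsilon^{-1}(\varepsilon^{2}\|\uu\|_{2,\Omega})\lesssim\varepsilon^{-1/2}\mathcal{K}$ from \eqref{eq:regularity Brinkman}.

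The consistency term is the crux and the main obstacle. The crude estimate \eqref{eq:consis error} only yields a bound of order $\varepsilon h|\uu|_{2,\Omega}\sim h\varepsilon^{-1/2}\mathcal{K}$, which blows up as $\varepsilon\to0$, so a sharper fractional bound is needed. Exploiting that $\jump{\uv_{h}}\cdot\mathbf{n}_{e}=0$ and $\int_{e}\jump{\uv_{h}}\cdot\mathbf{t}_{e}\ud s=0$ for all $\uv_{h}\in\uV_{h0}$, I would subtract the edge-mean of $(\nabla\uu\cdot\mathbf{n}_{e})\cdot\mathbf{t}_{e}$ and apply Cauchy--Schwarz edge by edge, controlling $\big(\sum_{e}\|\jump{\uv_{h}}\|_{0,e}^{2}\big)^{1/2}\lesssim h^{1/2}|\uv_{h}|_{1,h}$ by the scaled trace inequality and $\big(\sum_{e}\|\nabla\uu\cdot\mathbf{t}_{e}-\overline{(\cdot)}\|_{0,e}^{2}\big)^{1/2}\lesssim|\uu|_{3/2,\Omega}$ by interpolating, between $H^{1}$ and $H^{2}$, the standard trace-approximation bounds. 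This produces a consistency contribution of order $\varepsilon h^{1/2}|\uu|_{3/2,\Omega}$, and $\varepsilon|\uu|_{3/2,\Omega}\lesssim(\varepsilon\|\uu\|_{1,\Omega})^{1/2}(\varepsilon\|\uu\|_{2,\Omega})^{1/2}\lesssim\mathcal{K}$ exactly as above, so the term is $\lesssim h^{1/2}\mathcal{K}$. I expect the delicate points here to be the correct identification of the intermediate regularity index $s=3/2$ and the bookkeeping of the $\varepsilon$-powers, which must cancel precisely against \eqref{eq:regularity Brinkman}; this is where the whole argument stands or falls.

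Finally, \eqref{eq:pre uniform Brinkman} is assembled from \eqref{eq:p-ph Brinkman}: the velocity contribution $\|\uu-\uu_{h}\|_{0,\Omega}+\varepsilon|\uu-\uu_{h}|_{1,h}$ is already $\lesssim h^{1/2}\mathcal{K}$ by \eqref{eq:vel uniform Brinkman}; the pressure best-approximation error obeys $\inf_{q_{h}\in Q_{h}}\|p-q_{h}\|_{0,\Omega}\lesssim h|p|_{1,\Omega}\lesssim h\mathcal{K}$, since $\|p\|_{1,\Omega}\le\|p^{0}\|_{1,\Omega}+\|p-p^{0}\|_{1,\Omega}\lesssim\mathcal{K}$ by \eqref{eq:regularity Brinkman}; and the consistency term is bounded exactly as in the previous paragraph, the same manipulation applying to the supremum over all of $\uV_{h}$. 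Collecting these contributions gives $\|p-p_{h}\|_{0,\Omega}\lesssim h^{1/2}\mathcal{K}$, which completes the proof.
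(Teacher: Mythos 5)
Your proposal is correct and follows essentially the same route as the paper: $\dv\,\uu_h=\mathrm{P}_{Q_{h*}}g$ for the first bound, the splitting $\uu=\uu^0+(\uu-\uu^0)$ with the low-order estimate \eqref{eq:interpolation lower error} on the layer part, the geometric-mean interpolation $\varepsilon|\uu-\Pi_h\uu|_{1,h}\lesssim \varepsilon h^{1/2}|\uu|_{1,\Omega}^{1/2}|\uu|_{2,\Omega}^{1/2}$, and the regularity \eqref{eq:regularity Brinkman} with identical $\varepsilon$-bookkeeping throughout, including the pressure argument via \eqref{eq:p-ph Brinkman} and $\|p\|_{1,\Omega}\lesssim \|\uf\|_{\rot}+\|g\|_{1,+}$. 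The only difference is that for the consistency term the paper simply cites the fractional bound $\varepsilon^2 h^{1/2}|\uu|_{1,\Omega}^{1/2}|\uu|_{2,\Omega}^{1/2}|\uv_h|_{1,h}$ from \cite[Lemma 5.1]{Mardal;Tai;Winther2002}, whereas you sketch its standard proof (mean subtraction using the weak tangential continuity, scaled trace inequality, and space interpolation), arriving at the same estimate.
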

\begin{proof}
The first estimate is direct since $\dv\,\uu = g$.
To obtain the second inequality, we first analyze the interpolation error. By~\eqref{eq:interpolation error}, \eqref{eq:interpolation lower error}, and \eqref{eq:regularity Brinkman}, we have
\begin{equation}\label{eq:uniform Pi_h L2 norm}
\begin{split}
\|\uu-\Pi_{h}\uu\|_{0,\Omega} & \leqslant \|({\rm I} - \Pi_{h})(\uu-\uu{}^{0}\|_{0,\Omega} + \|\uu{}^{0}-\Pi_{h}\uu{}^{0}\|_{0,\Omega} \\
& \lesssim h^{\frac{1}{2}}\big(\|\uu - \uu{}^{0}\|_{0,\Omega}^{\frac{1}{2}}\|\uu - \uu{}^{0}\|_{1,\Omega}^{\frac{1}{2}} + h^{\frac{1}{2}}\|\uu{}^{0}\|_{1,\Omega} \big) \lesssim h^{\frac{1}{2}}(\|\uf\|_{\rot} + \|g\|_{1,+}).
\end{split}
\end{equation}
At the same time, 
\begin{equation}\label{eq:uniform Pi_h e norm}
\begin{split}
\varepsilon|\uu - \Pi{}_{h}\uu|_{1,h} &\lesssim \varepsilon |\uu|_{1,\Omega}^{\frac{1}{2}} |\uu - \Pi{}_{h}\uu|_{1,h}^{\frac{1}{2}} \lesssim  \varepsilon h^{\frac{1}{2}}|\uu|_{1,\Omega}^{\frac{1}{2}}|\uu|_{2,\Omega}^{\frac{1}{2}} \lesssim h^{\frac{1}{2}}(\|\uf\|_{\rot} + \|g\|_{1,+}),
\end{split}
\end{equation}
where we utilize $ \varepsilon|\uu|_{1,\Omega}^{\frac{1}{2}}|\uu|_{2,\Omega}^{\frac{1}{2}}  \lesssim  \varepsilon^{\frac{1}{2}}|\uu|_{1,\Omega} +  \varepsilon^{\frac{3}{2}}|\uu|_{2,\Omega} \lesssim \|\uf\|_{\rot} + \|g\|_{1,+}$. 

By the continuity of $\uv{}_{h}\cdot \mathbf{n}_{e}$ and $\fint_{e}\uv{}_{h}\cdot \mathbf{t}_{e}\ud s$, a standard estimate (see, e.g.,~\cite[Lemma 5.1]{Mardal;Tai;Winther2002}) yields
$\sum\limits_{e \in \mathcal{E}_{h}} \varepsilon^{2} \int_{e} (\nabla \uu \cdot \mathbf{n}_{e} )\cdot \jump{\uv{}_{h}} \ud s \lesssim \varepsilon^{2}h^{\frac{1}{2}}|\uu{}|_{1,\Omega}^{\frac{1}{2}} |\uu{}|_{2,\Omega}^{\frac{1}{2}} |\uv{}_{h}|_{1,h}.
$ Then we derive
\begin{equation}\label{eq:uniform consis error}
\frac{\big|\sum\limits_{e \in \mathcal{E}_{h}} \varepsilon^{2} \int_{e} (\nabla \uu \cdot \mathbf{n}_{e} )\cdot \jump{\uv{}_{h}} \ud s\big|}{\varepsilon |\uv{}_{h}|_{1,h}} \lesssim \varepsilon h^{\frac{1}{2}}|\uu|_{1,\Omega}^{\frac{1}{2}}|\uu|_{2,\Omega}^{\frac{1}{2}}  \lesssim h^{\frac{1}{2}}(\|\uf\|_{\rot} + \|g\|_{1,+}).
\end{equation}
A combination of\eqref{eq:u-uh Brinkman}, \eqref{eq:uniform  Pi_h L2 norm}, \eqref{eq:uniform Pi_h e norm}, and~\eqref{eq:uniform consis error} leads to 
\begin{align}\label{eq:uniform vel error}
 \|\uu - \uu{}_{h}\|_{0,\Omega} + \varepsilon|\uu-\uu{}_{h}|_{1,h} \lesssim h^{\frac{1}{2}}(\|\uf\|_{\rot} + \|g\|_{1,+}).
\end{align}
 Again from~\eqref{eq:regularity Brinkman} and notice that $\varepsilon <1$, we have
 \begin{equation}\label{eq:uniform Pi pressure}
 \|p-\Pi_{Q_{h*}}p\|_{0,\Omega} \lesssim h|p|_{1,\Omega} \lesssim h|p-p^{0}|_{1,\Omega} + h|p^{0}|_{1,\Omega} \lesssim h(\|\uf\|_{\rot} + \|g\|_{1,+}).
 \end{equation}
Hence, by~\eqref{eq:p-ph Brinkman}, \eqref{eq:uniform vel error}, and~\eqref{eq:uniform Pi pressure}, the last estimate~\eqref{eq:pre uniform Brinkman} is derived. 
\end{proof}

\section{Numerical Experiments}
\label{sec:numerical experiments}
In this section, we carry out numerical experiments to validate the theory and illustrate the capacity of the newly proposed element pair. Examples are given as illustrations from different perspectives.
\begin{itemize}
\item  Examples 1 and 2 test the method with the Stokes problem, especially its robustness with respect to the Reynolds' number and to the triangulations; 
\item  Examples 3 and 4 test the method with the Darcy--Stokes--Brinkman equation, especially the robustness with respect to the the small parameter, for smooth solutions as well as solutions with sharp layers;
\item Examples 5 and 6 test the method with the incompressible Navier--Stokes equation, regarding evolutionary and steady states. 
\end{itemize}

Three kinds of $P_2-P_1$ pairs are involved in the experiments, namely,
\begin{itemize}
\item[{TH:}] the Taylor-Hood element pair with continuous $P_{2}$ functions for the velocity space and continuous $P_{1}$ functions for the pressure space;
\item[{SV:}] the Scott-Vogelius element pair with continuous $P_{2}$ functions for the velocity space and discontinuous $P_{1}$ functions for the pressure space;
\item[{NPP:}] the newly proposed $P_{2}-P_{1}$ element pair.
\end{itemize}

All simulations are performed on uniformly refined grids. For the SV pair, an additional barycentric refinement is applied on each grid to guarantee the stability.

\medskip

\paragraph{\bf Example 1} This example was suggested in~\cite{Neilan2017} to illustrate the non-pressure-robustness of classical elements. Let $\Omega =(0,1)^{2}$. Consider the Stokes equations~\eqref{eq:Stokes eq} with $\varepsilon^{2} = 1$, $g=0$, and $\uf = (0,Ra(1-y+3y^{2}))^{T}$, where $Ra>0$ represents a parameter. No-slip boundary conditions are imposed on $\partial\Omega$. 
 The exact solution pair is
$\uu = \undertilde{0}$ and $ p = Ra(y^{3}-\frac{y^{2}}{2} + y -\frac{7}{12}).$ 
 
\begin{figure}[H]
\begin{minipage}[t]{0.5\linewidth}
\centerline{\includegraphics[width=2.75in]{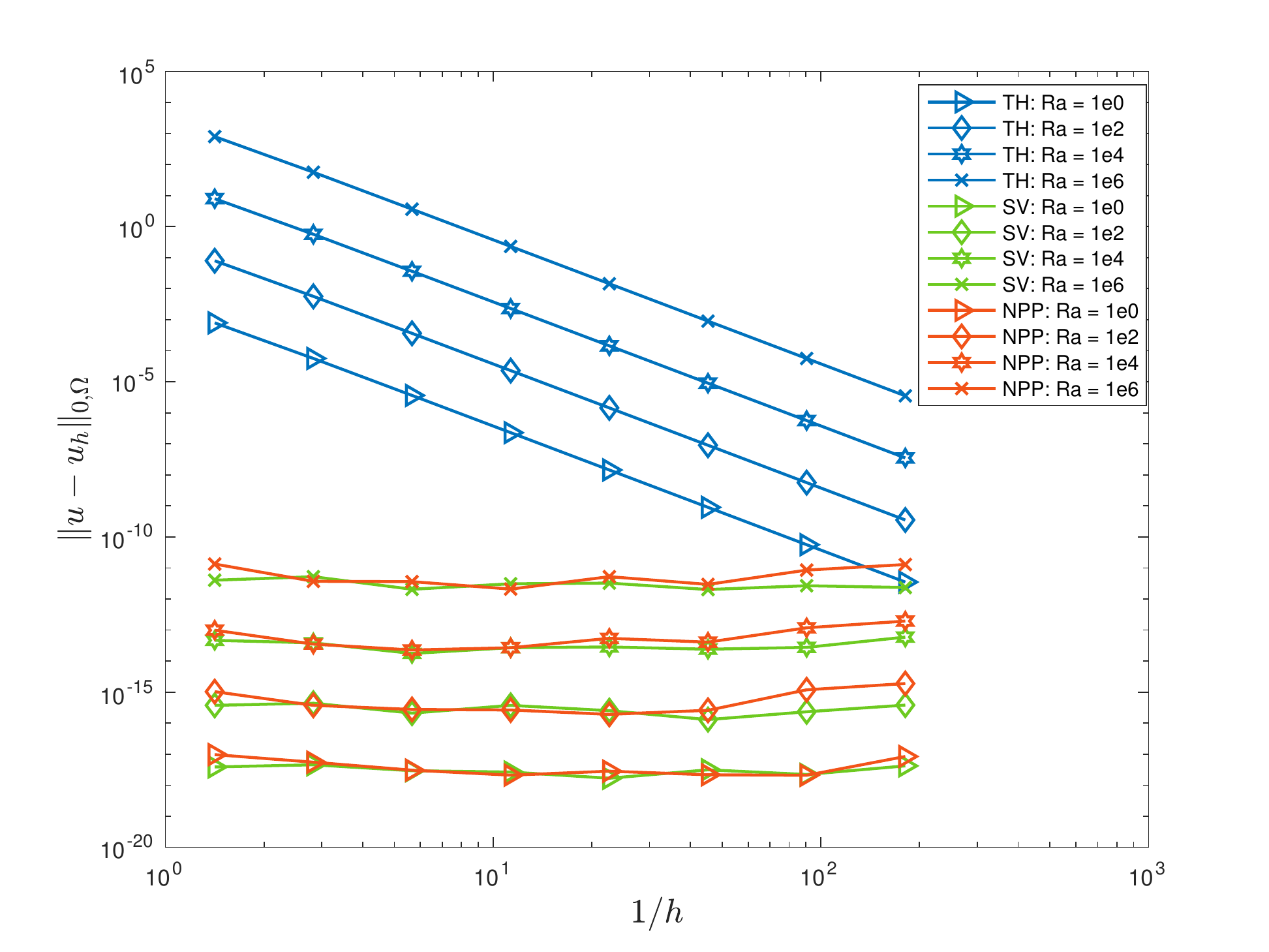}}
\end{minipage}%
\begin{minipage}[t]{0.5\linewidth}
\centerline{\includegraphics[width=2.75in]{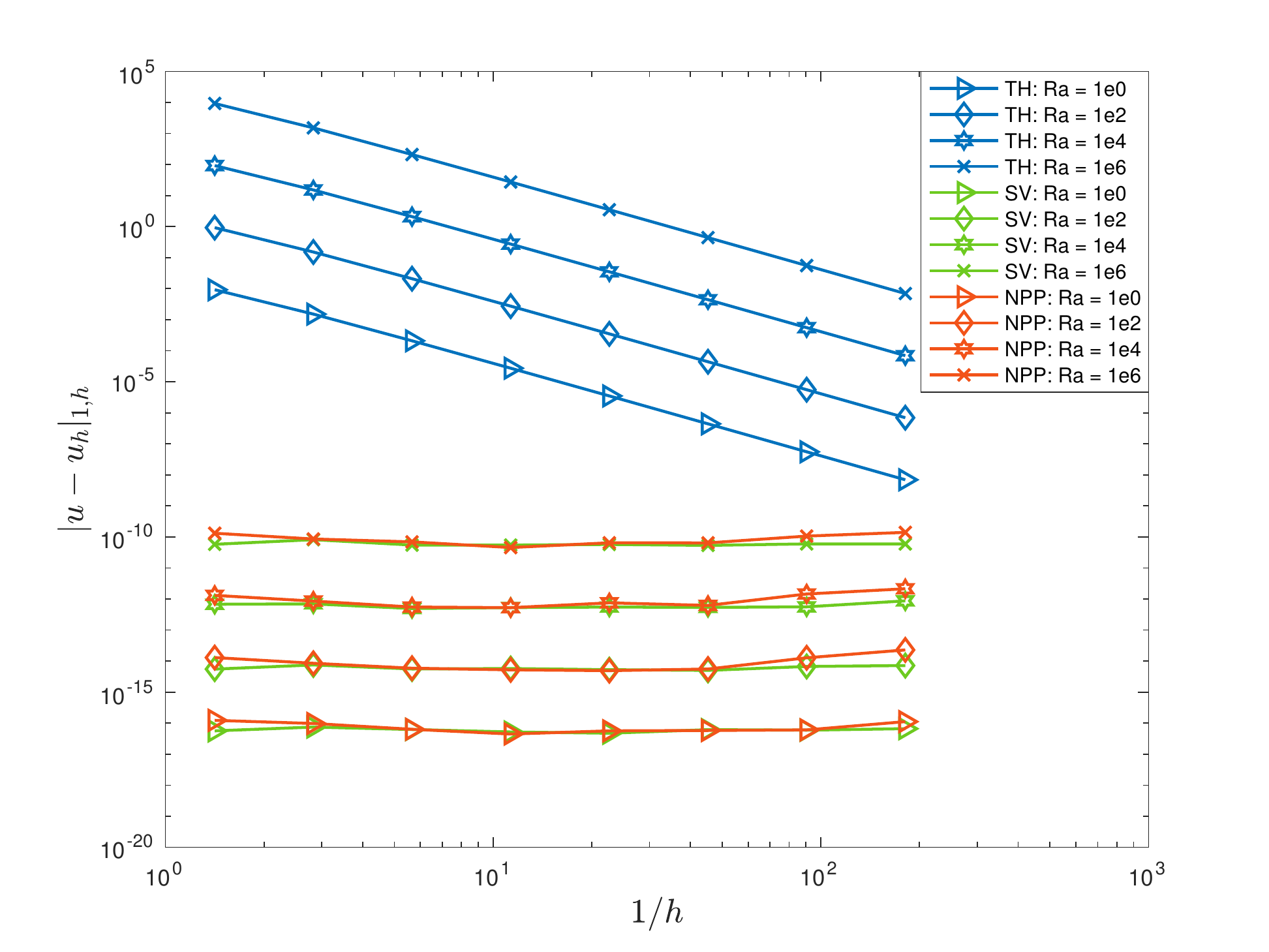}}
\end{minipage}%
\caption{Example 1: Velocity errors in the no-flow  Stokes equations by the TH, SV, and NPP pairs.}\label{fig:no flow err}
\end{figure}

For the continuous problem, different values of $Ra$  result in different exact pressures and the same exact velocity vector. As is shown in Figure~\ref{fig:no flow err}, for both the SV and NPP pairs, the numerical velocities are very close to zero for different values of $Ra$. However, for the TH pair, the discrete velocity is far from zero, even when $Ra = 1$. It demonstrates the advantage of pressure-robust pairs especially for problems with large pressures.

\medskip

\paragraph{\bf Example 2} This example was also introduced in~\cite{Neilan2017}.  Let $\Omega = (0,4)\times (0,2)\backslash [2,4]\times[0,1]$. Consider a flow with Coriolis forces with the following form 
\begin{equation*}
\left\{
\begin{split}
-\varepsilon^{2} \Delta\,\uu + \nabla p + 2\, \uw \times \uu = \uf  \quad \mbox{in } \Omega,  \\
\dv\,\uu = 0 \quad \mbox{in } \Omega,
\end{split}
\right.
\end{equation*}
where $\uw = (0,0,w)^{T}$ is a constant angular velocity vector. Changing the magnitude $w$ will change only the exact pressure, and not the true velocity solution. Dirichlet boundary conditions are imposed on $\partial \Omega$; see Figure~\ref{fig:grids Ex2} (Left). The computed domain and initial unstructured grid are depicted in Figure~\ref{fig:grids Ex2}. Simulations were performed with $\varepsilon^{2} = 0.01$, while $w= 100$ or $w= 1000$. 

\begin{figure}[htbp]
\centering
\begin{minipage}[t]{0.33\linewidth}
\centerline{\includegraphics[width=2in]{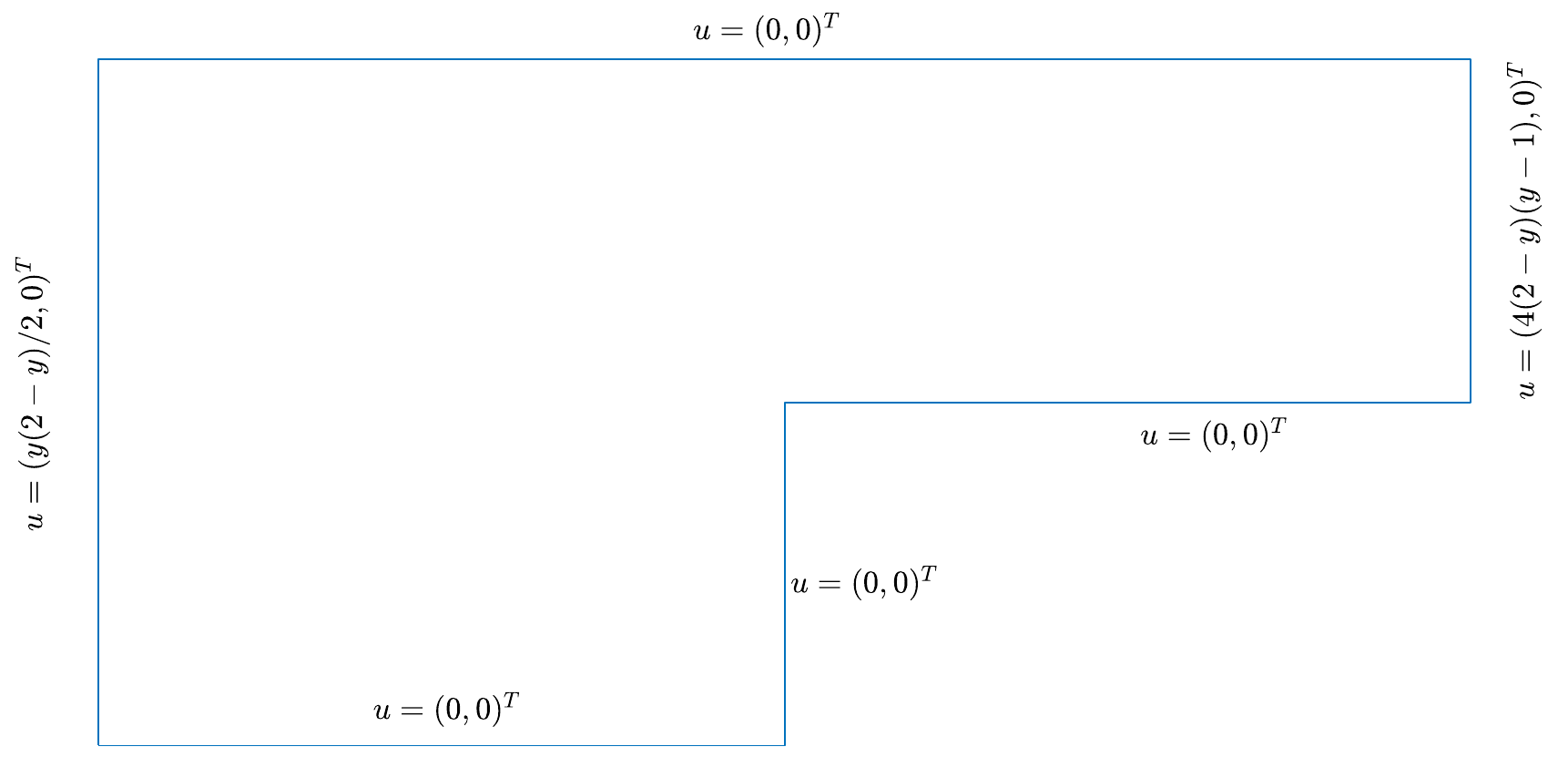}}
\end{minipage}%
\begin{minipage}[t]{0.33\linewidth}
\centerline{\includegraphics[width=1.9in]{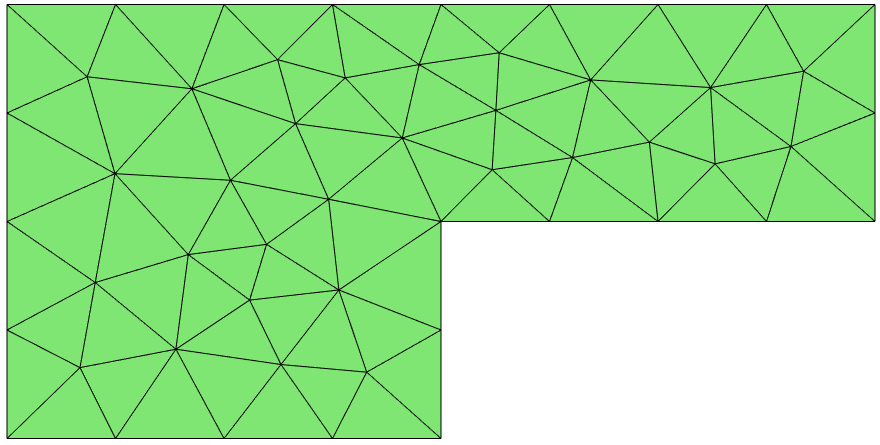}}
\end{minipage}%
\begin{minipage}[t]{0.33\linewidth}
\centerline{\includegraphics[width=1.9in]{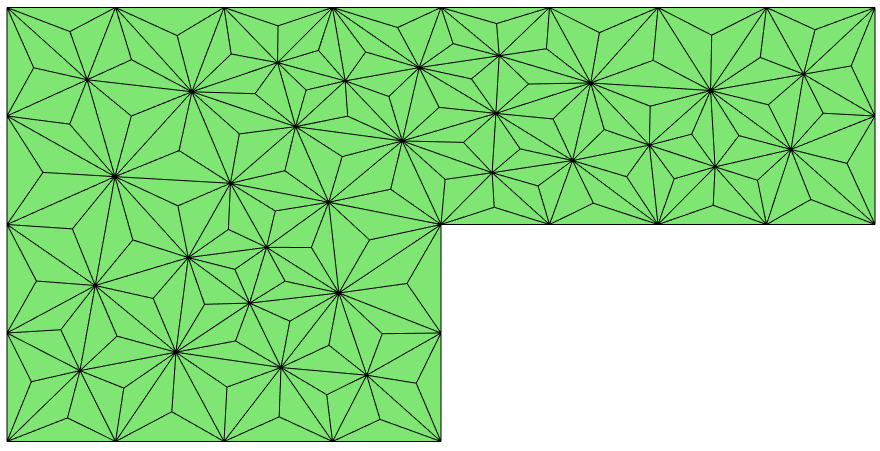}}
\end{minipage}%
\caption{Example~2: Forward facing step domain, unstructured mesh (level 1), and unstructured barycentric mesh for the SV pair (level 1).}\label{fig:grids Ex2}
\end{figure}

Computed velocities (speed) with $w= 100$ and $w= 1000$ are depicted in Figures~\ref{fig:w100 Ex2} and~\ref{fig:w1000 Ex2}, respectively. The solutions computed with the SV and NPP pairs are considerably more accurate compared with the TH pair.
Moreover, when $w = 1000$, the advantages of divergence-free elements are more obvious on the third level grid.

\begin{figure}[htbp]
\begin{minipage}[t]{0.33\linewidth}
\centerline{\includegraphics[width=1.9in]{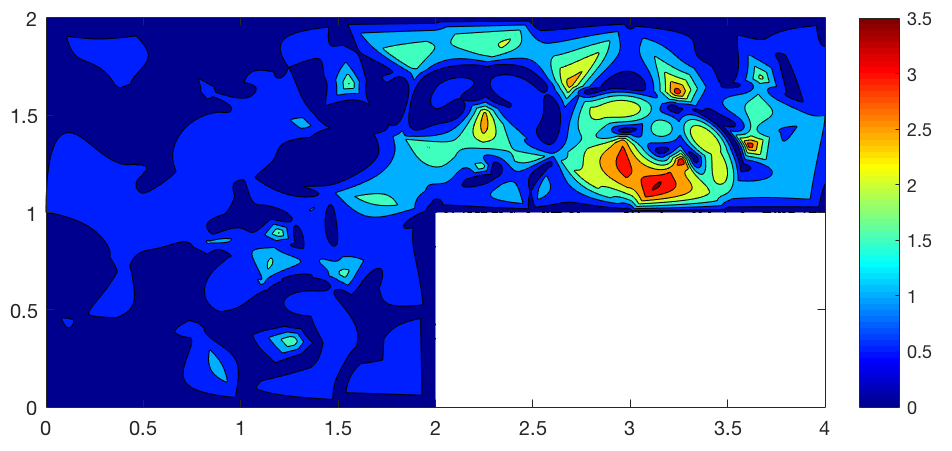}}
\end{minipage}%
\begin{minipage}[t]{0.33\linewidth}
\centerline{\includegraphics[width=1.9in]{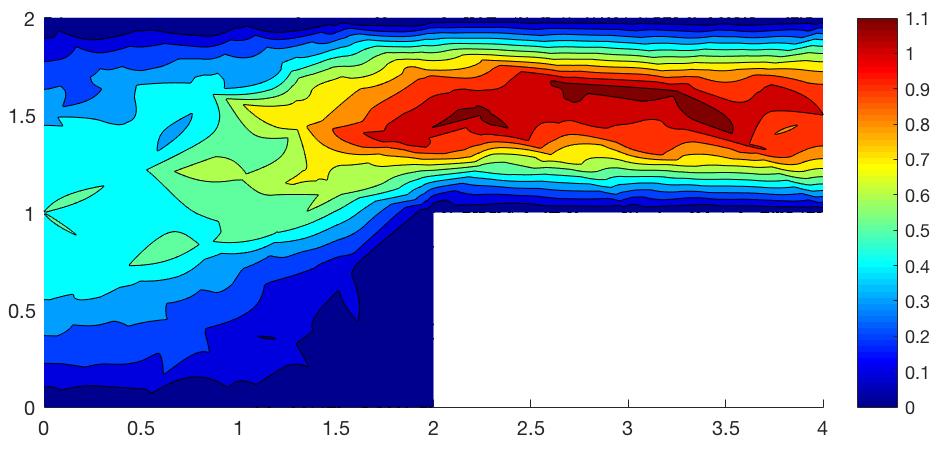}}
\end{minipage}%
\begin{minipage}[t]{0.33\linewidth}
\centerline{\includegraphics[width=1.9in]{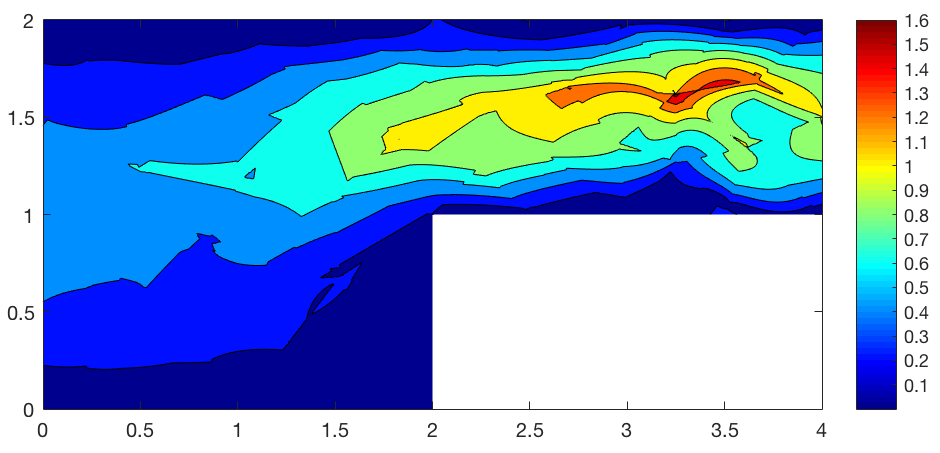}}
\end{minipage}%
 	
\vspace*{8pt}
	
\begin{minipage}[t]{0.33\linewidth}
\centerline{\includegraphics[width=1.9in]{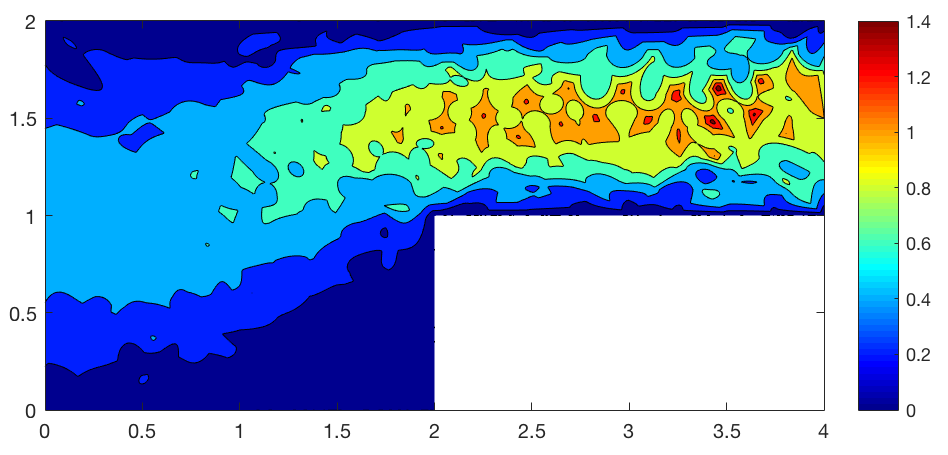}}
\end{minipage}%
\begin{minipage}[t]{0.33\linewidth}
\centerline{\includegraphics[width=1.9in]{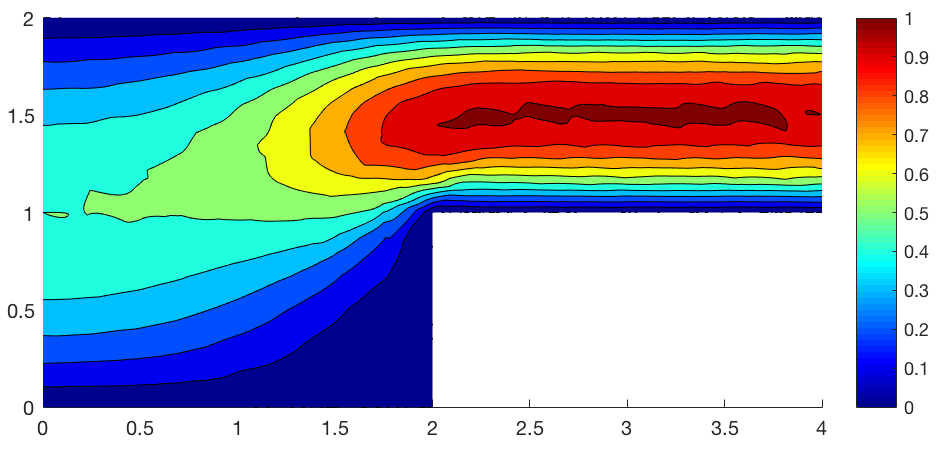}}
\end{minipage}%
\begin{minipage}[t]{0.33\linewidth}
\centerline{\includegraphics[width=1.9in]{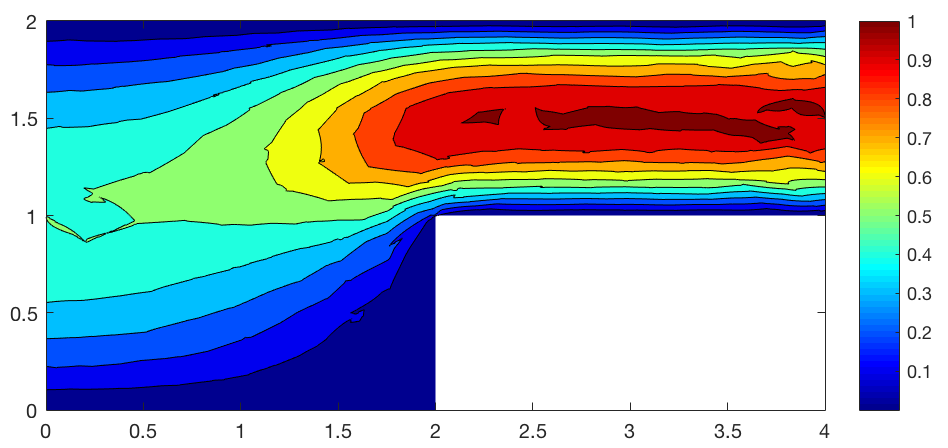}}
\end{minipage}%
	
\vspace*{8pt}

\begin{minipage}[t]{0.33\linewidth}
\centerline{\includegraphics[width=1.9in]{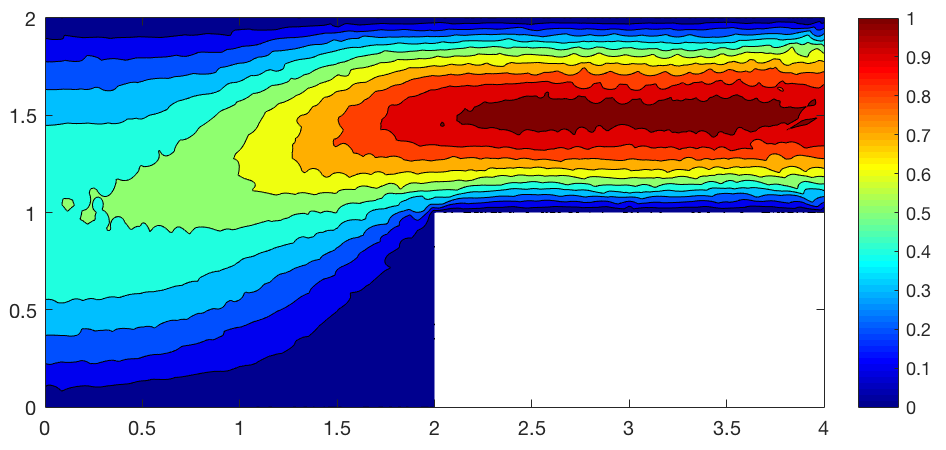}}
\end{minipage}%
\begin{minipage}[t]{0.33\linewidth}
\centerline{\includegraphics[width=1.9in]{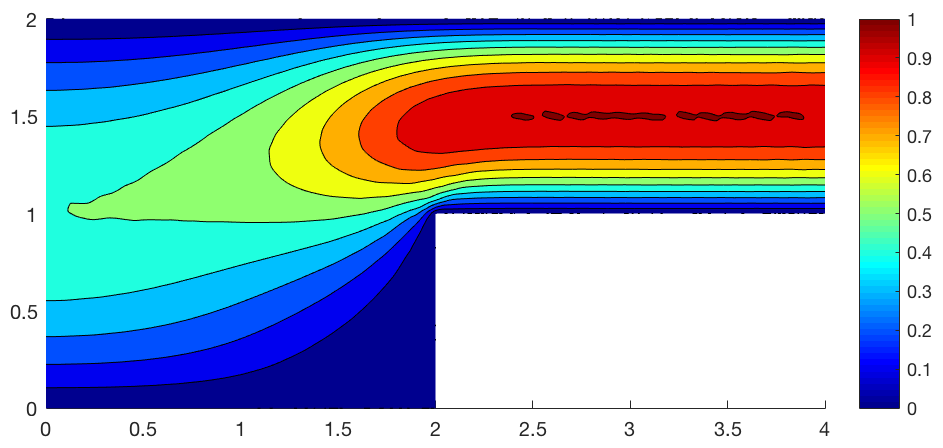}}
\end{minipage}%
\begin{minipage}[t]{0.33\linewidth}
\centerline{\includegraphics[width=1.9in]{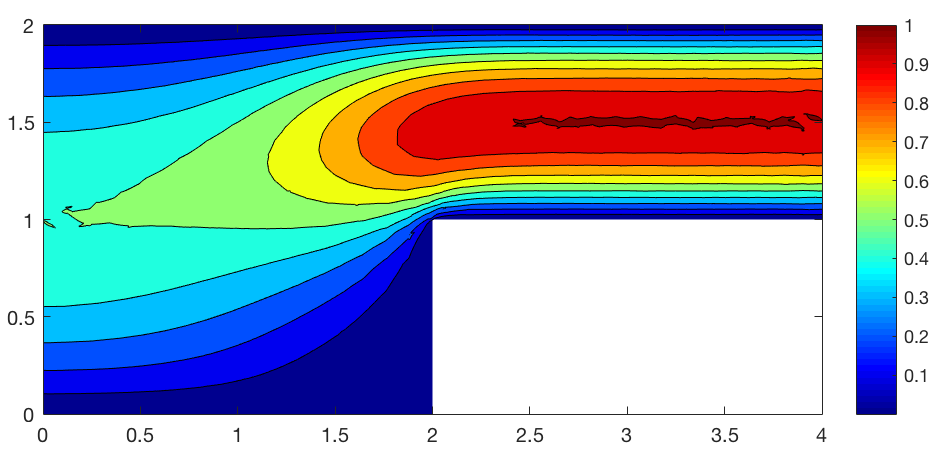}}
\end{minipage}	%
\caption{Example~2 ($w =100$ and $\varepsilon^{2}= 0.01$): Speed obtained by the TH pair (left column), the SV pair (middle column), and the NPP pair (right column); rows~1~--~3 are results on meshes 1--  3.}\label{fig:w100 Ex2}
\end{figure}

{
\begin{figure}[H]
\begin{minipage}[t]{0.33\linewidth}
\centerline{\includegraphics[width=1.9in]{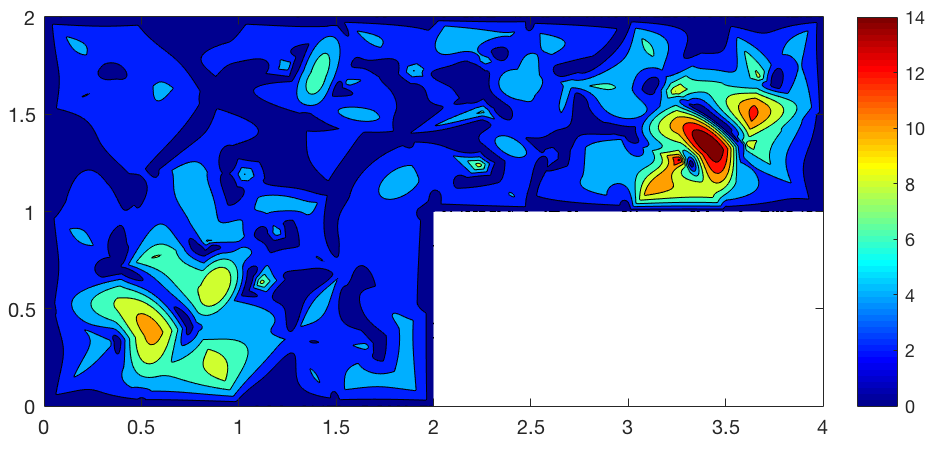}}
\end{minipage}%
\begin{minipage}[t]{0.33\linewidth}
\centerline{\includegraphics[width=1.9in]{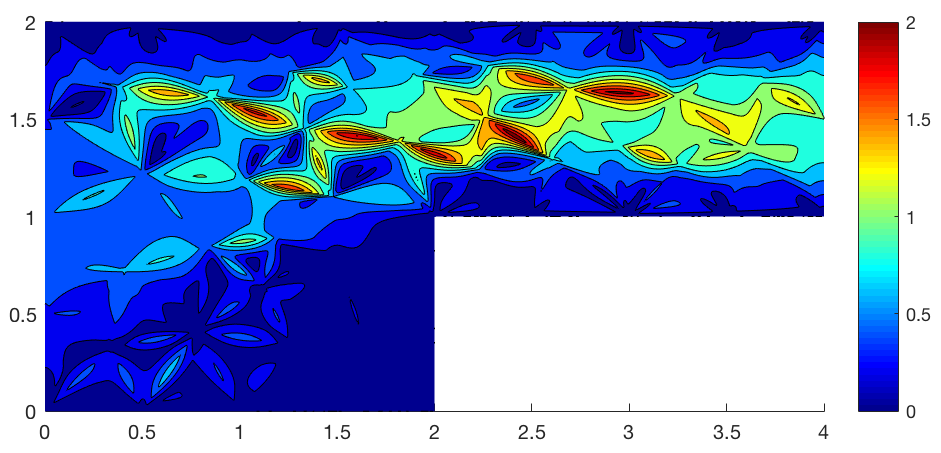}}
\end{minipage}%
\begin{minipage}[t]{0.33\linewidth}
\centerline{\includegraphics[width=1.9in]{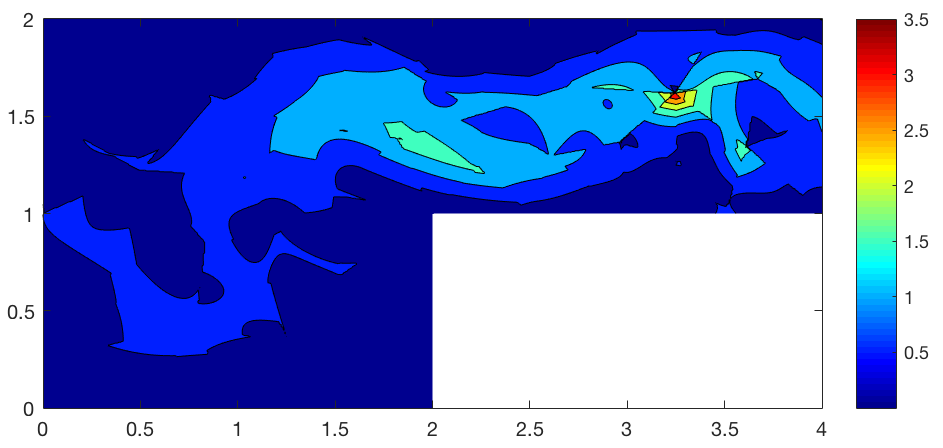}}
\end{minipage}%

\begin{minipage}[t]{0.33\linewidth}
\centerline{\includegraphics[width=1.9in]{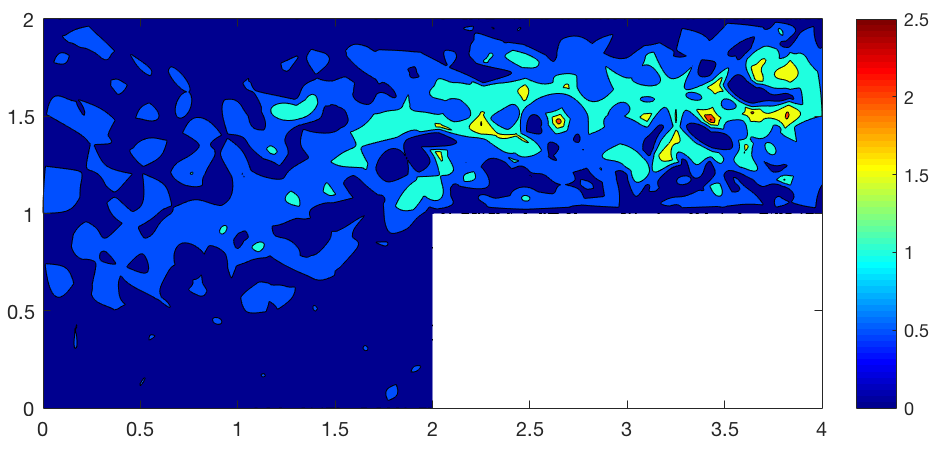}}
\end{minipage}%
\begin{minipage}[t]{0.33\linewidth}
\centerline{\includegraphics[width=1.9in]{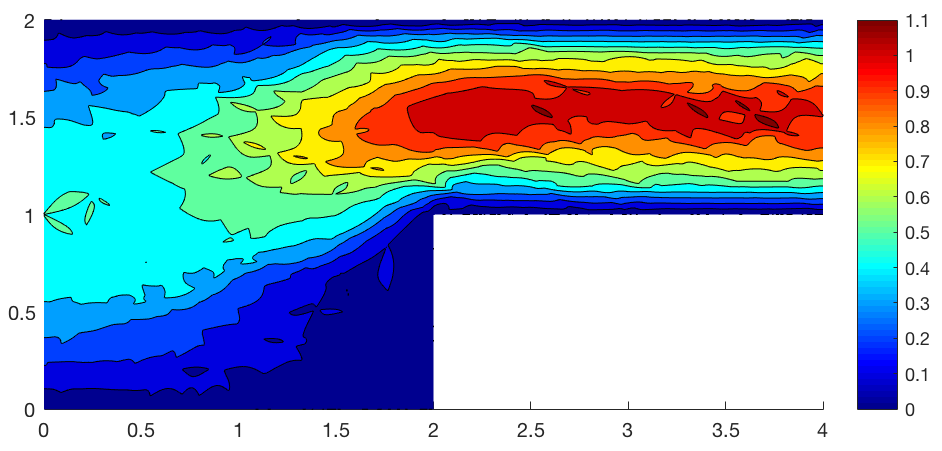}}
\end{minipage}%
\begin{minipage}[t]{0.33\linewidth}
\centerline{\includegraphics[width=1.9in]{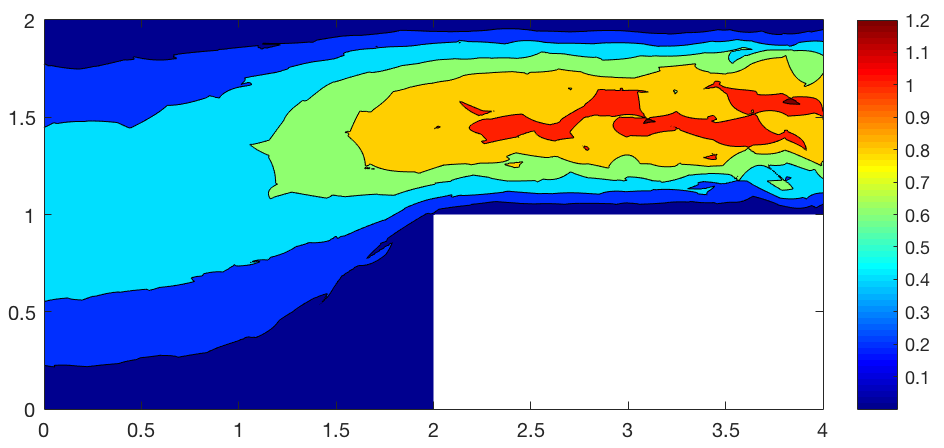}}
\end{minipage}%

\vspace*{8pt}
	
\begin{minipage}[t]{0.33\linewidth}
\centerline{\includegraphics[width=1.9in]{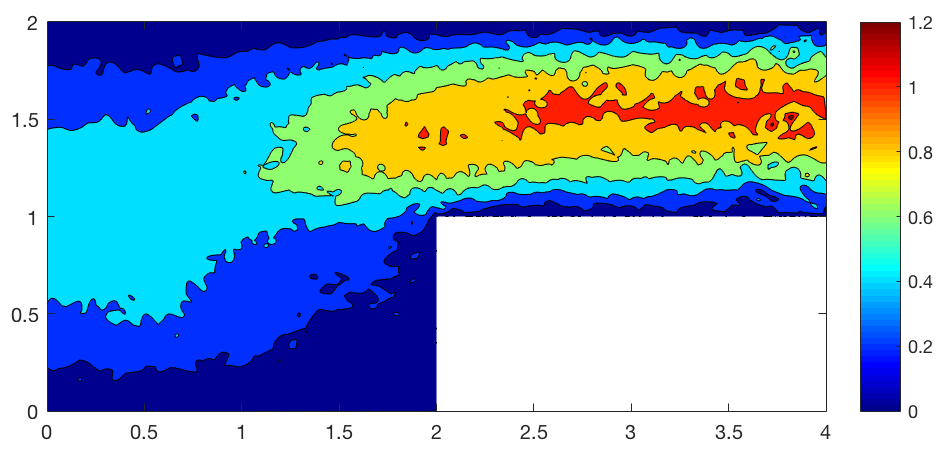}}
\end{minipage}%
\begin{minipage}[t]{0.33\linewidth}
\centerline{\includegraphics[width=1.9in]{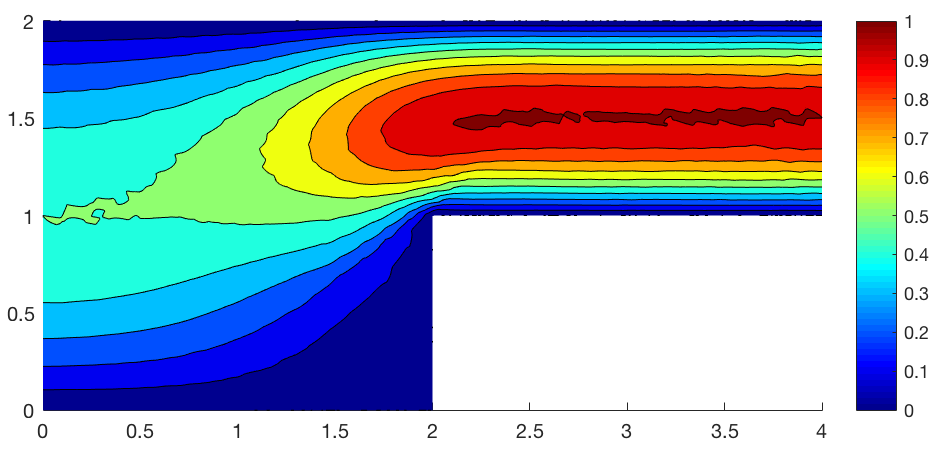}}
\end{minipage}%
\begin{minipage}[t]{0.33\linewidth}
\centerline{\includegraphics[width=1.9in]{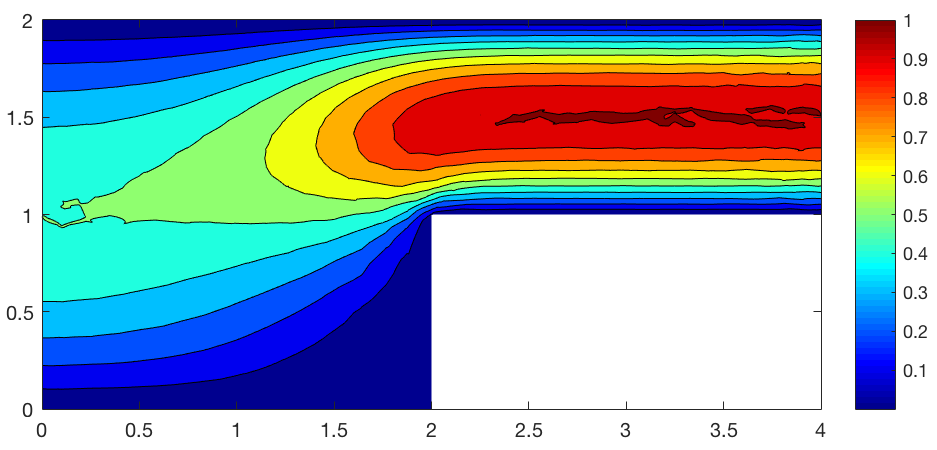}}
\end{minipage}%

\vspace*{8pt}

\caption{Example~2 ($w =1000$ and $\varepsilon^{2} = 0.01$): Speed obtained by the TH pair (left column), the SV (middle column), and the NPP pair (right column); rows~1~-- ~3 are results on meshes~1~--~3.}\label{fig:w1000 Ex2}
\end{figure}
}

To compare the mesh dependence of  these three pairs, we apply them on a structured mesh without additional barycentric refinement (Figure~\ref{fig:structured mesh ex2}). This type of meshes are generally considered of good quality and commonly used. As is shown in Figure~\ref{fig:mesh-dependence ex2}, the simulation by the SV pair turns out to be not reliable on the grid, while the NPP pair plays fine.

{
\begin{figure}[htbp]
\centering
\begin{minipage}[t]{0.5\linewidth}
\centerline{\includegraphics[width=1.95in]
{facing_step_mesh.pdf}}
\end{minipage}%
\begin{minipage}[t]{0.5\linewidth}
\centerline{\includegraphics[width=1.9in]
{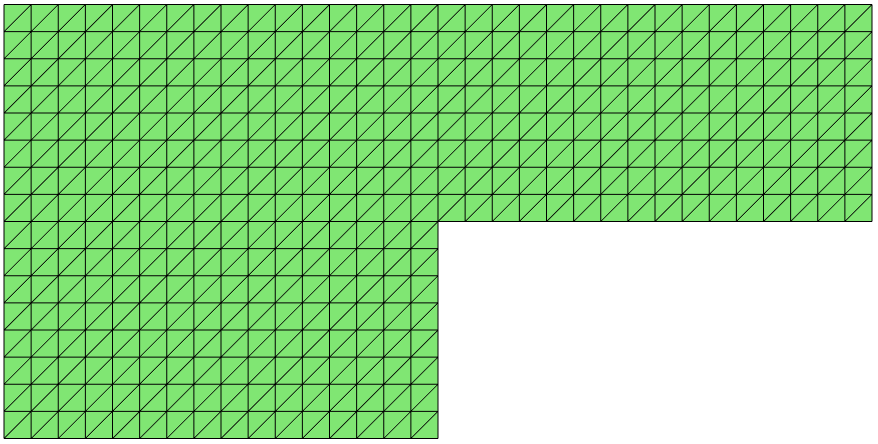}}
\end{minipage}%
\caption{Forward facing step domain and structured non-barycentric mesh.}
\label{fig:structured mesh ex2}
\end{figure}
}

\begin{figure}[H]
\begin{minipage}[t]{0.33\linewidth}
\centerline{\includegraphics[width=1.9in]{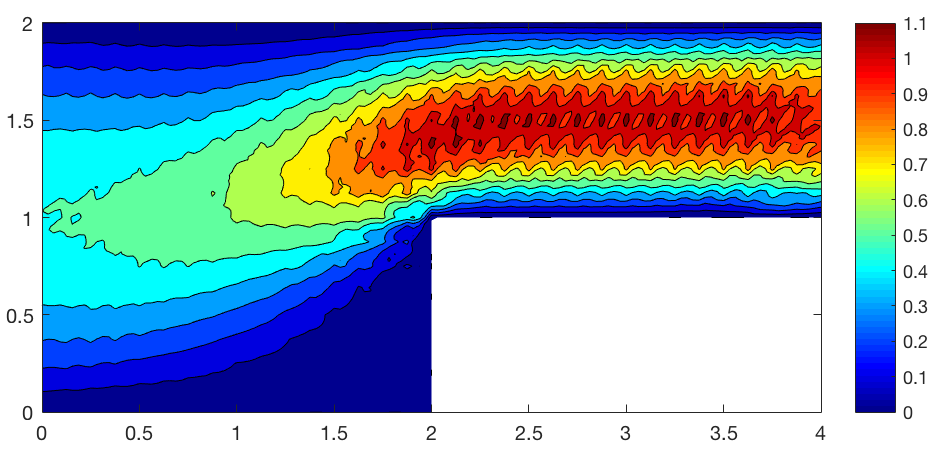}}
\end{minipage}%
\begin{minipage}[t]{0.33\linewidth}
\centerline{\includegraphics[width=1.9in]{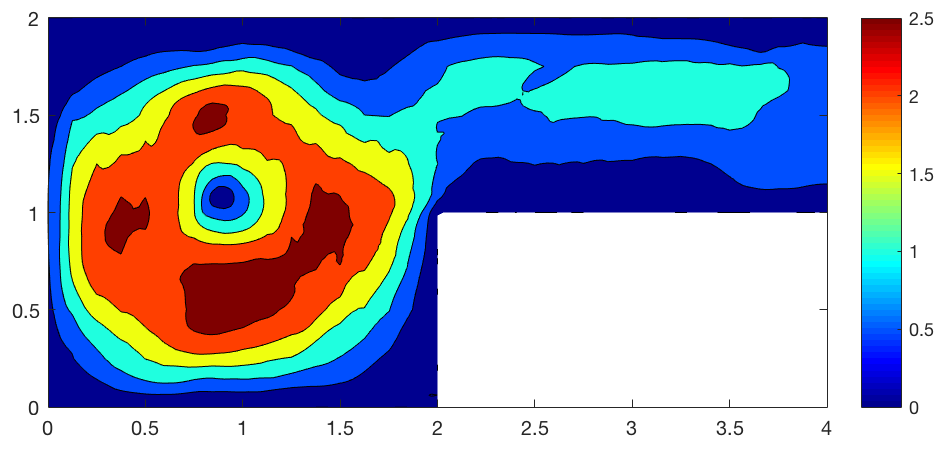}}
\end{minipage}%
\begin{minipage}[t]{0.33\linewidth}
\centerline{\includegraphics[width=1.9in]{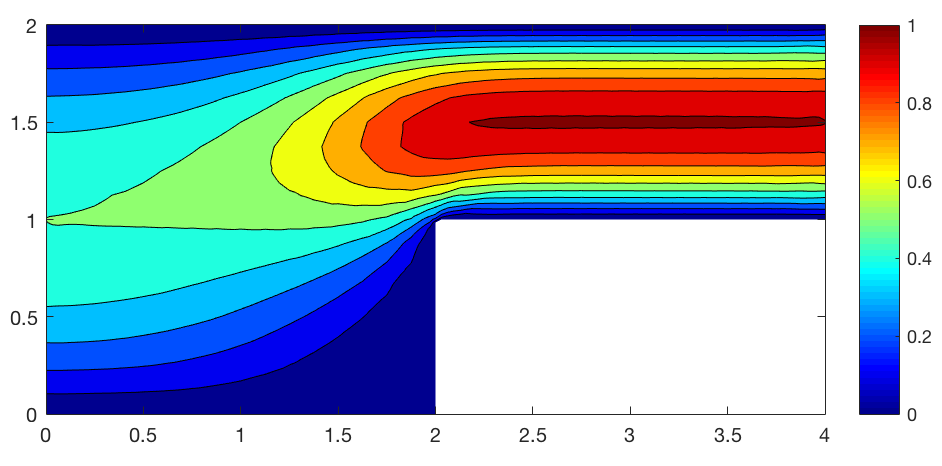}}
\end{minipage}%

\vspace*{8pt}

\caption{Example~2 ($w =100$ and $\varepsilon^{2} = 0.01$): Simulation by the SV pair (middle) is not as good as the TH pair (left) nor the NPP pair (right) on non-barycentric mesh.}
\label{fig:mesh-dependence ex2}
\end{figure}

\medskip

\paragraph{\bf Example 3} Let $\Omega = (0,1)^{2}$. We consider the Darcy--Stokes--Brinkman problem with
\begin{align*}
 \uu = \curl\,\big(sin^{2}(\pi x)sin^{2}(\pi y)\big) = \pi 
\left(
\begin{array}{c}
 sin^{2}(\pi x)sin(2\pi y) \\ -sin^{2}(\pi y)sin(2\pi x)
\end{array}
\right);
\quad
 p = \frac{2}{\pi} - sin(\pi x).
\end{align*}
The force $\uf$ is computed by $\uf = -\varepsilon^{2}\Delta \uu + \uu + \nabla p$, and $g = \dv\,\uu = 0$. The solution is smooth and independent of $\varepsilon$. We start from an unstructured initial triangular grid, and it is successively refined to maintain the quality of grids. 

In Figure~\ref{fig:smooth Brinkman P2P1 err}, we draw convergence curves of the NPP pair with different values of $\varepsilon$, where curves represents actual error declines while triangles illustrates theoretic convergence rates correspondingly. As is shown, when $0<\varepsilon<1 $, errors  in $L^{2}$-norm are of $\mathcal{O}(h^2)$ order and errors in $H^{1}$-norm are of $\mathcal{O}(h)$ order. In the limiting case of $\varepsilon=0$, the $L^{2}$-norm error reaches $\mathcal{O}(h^3)$ order and $H^{1}$-norm error reaches $\mathcal{O}(h^2)$ order, which is due to the fact that $\uV{}_{h0}$ is a conforming subspace of $\uH{}(\dv,\Omega)$. 

\begin{figure}[htbp]
\begin{minipage}[t]{0.5\linewidth}
\centerline{\includegraphics[width=2.5in]{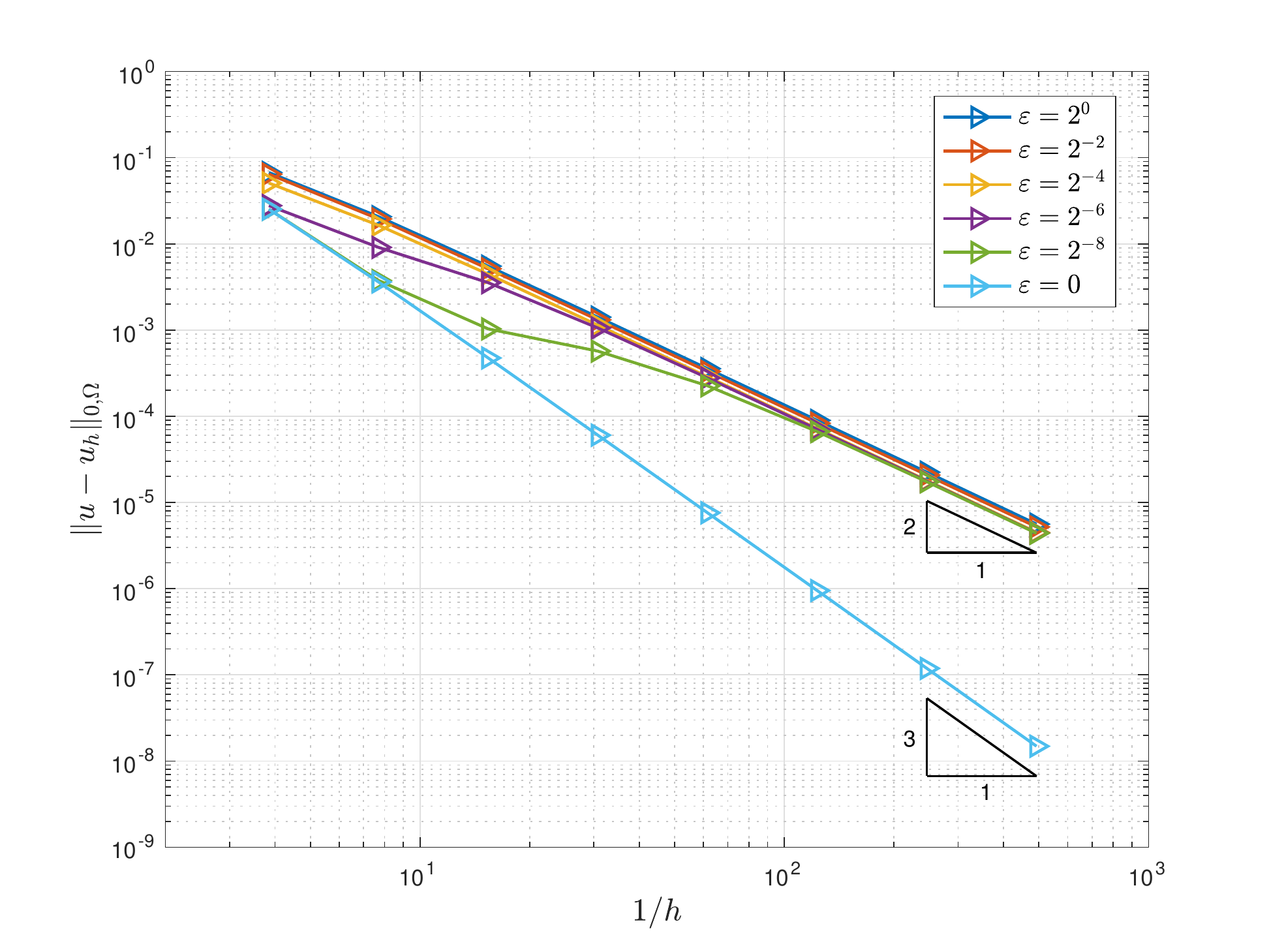}}
\end{minipage}%
\begin{minipage}[t]{0.5\linewidth}
\centerline{\includegraphics[width=2.5in]{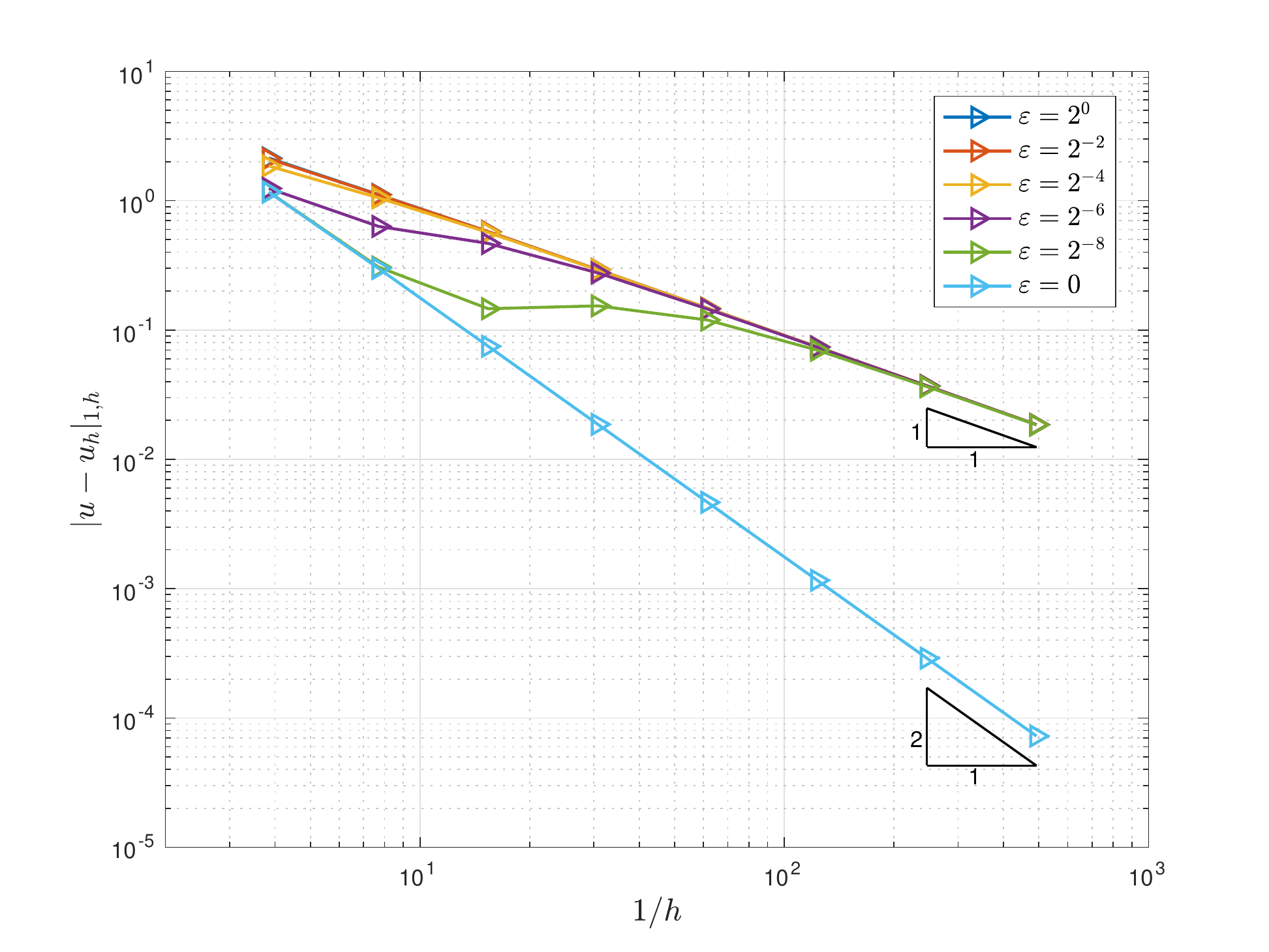}}
\end{minipage}%
\caption{Example~3: Velocity errors in the $L^{2}$-norm (left) and in the $H^{1}$-norm (right) by the NPP method.}\label{fig:smooth Brinkman P2P1 err}
\end{figure}

In~Figure~\ref{fig:smooth Brinkman P2P1vsTH}, we present the errors in the norm $|||\cdot|||_{\varepsilon,h} $ by the TH pair and the NPP pair when $\varepsilon = 2^{-8}$. Here $|||\uv|||_{\varepsilon,h} := \varepsilon^{2}|\uv|_{1,h}^{2}+\|\uv\|_{0,\Omega}+\|\dv\,\uv\|_{0,\Omega}^{2}$ is the commonly used norm which combines the Stokes and Darcy problems. Although the convergence rate of the NPP pair is one order lower than that of the TH pair, the error of the former is smaller (several magnitudes) than that of the latter in the figure where millions of DOFs have been used on the finest grid. For the NPP pair, the associated energy error of velocity is close to $10^{-3}$, while for the TH pair, it does not reach an error of $10^{-3}$ even on the eighth level mesh. However, as remarked in the figure, the degrees of freedom of the TH pair (on the eighth level mesh) is over 500 times more than the NPP pair (on the third level mesh). 
\begin{figure}[htbp]
\centerline{\includegraphics[width=2.5in]{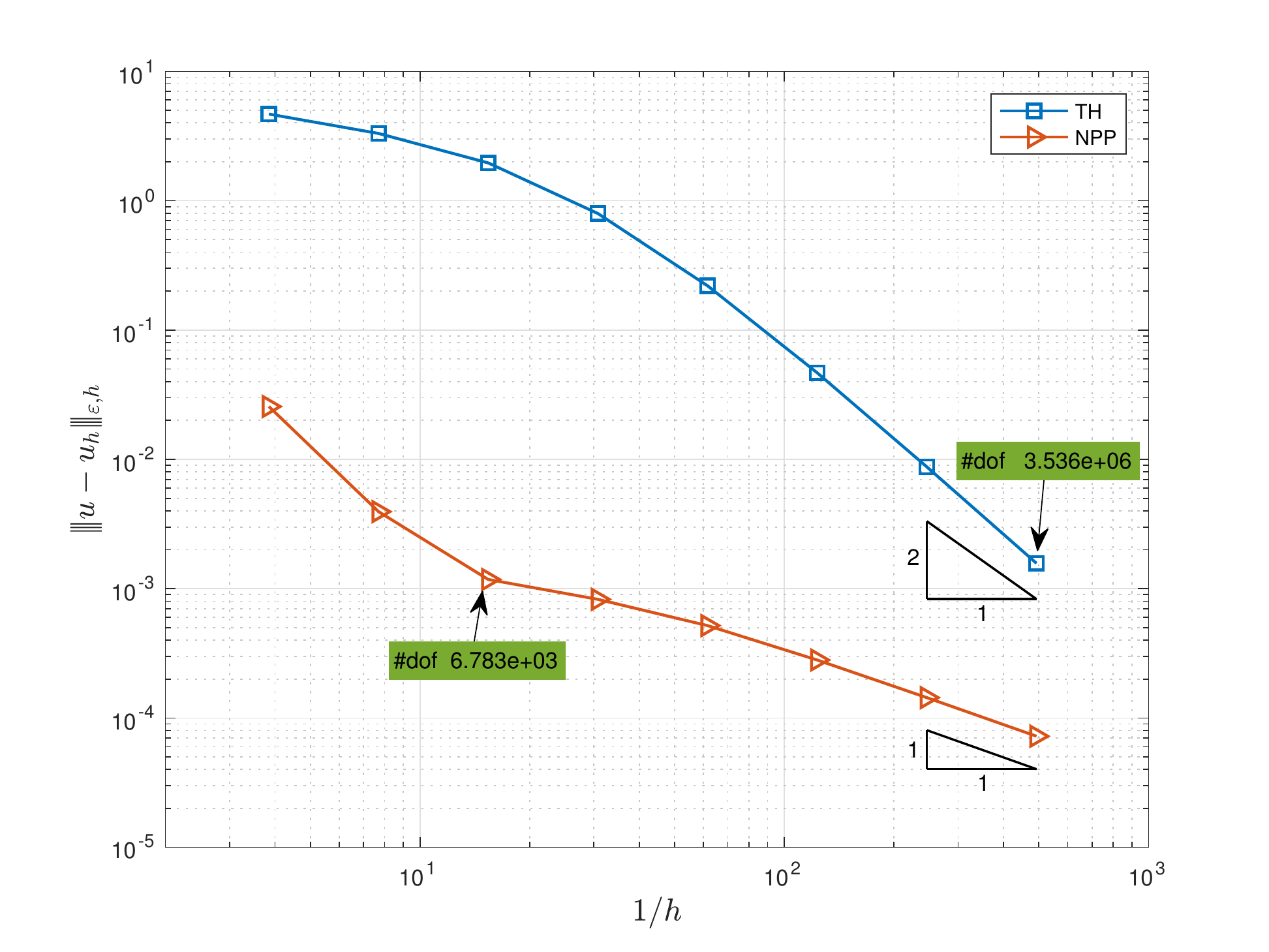}}
\caption{Example~3: Errors of velocity in the energy norm by the NPP pair and the TH pair when $\varepsilon = 2^{-8}$.}\label{fig:smooth Brinkman P2P1vsTH}
\end{figure}

\medskip

\paragraph{\bf Example 4} Let $\Omega = (0,1)^{2}$. Consider the Darcy--Stokes--Brinkman problem with 
$$
\uu = \varepsilon \curl\,\big(e^{-\frac{xy}{\varepsilon}}\big) = \left(
\begin{array}{c}
 -xe^{-\frac{xy}{\varepsilon}} \\ ye^{-\frac{xy}{\varepsilon}} 
\end{array}
\right); \quad p = -\varepsilon e^{-\frac{x}{\varepsilon}}.$$
The boundary layers of the exact velocity $\uu$ are shown in Figure~\ref{fig: exact velocity bdlayer}. 

\begin{figure}[htbp]
\begin{minipage}[t]{0.5\linewidth}\centerline{\includegraphics[width=2.2in]{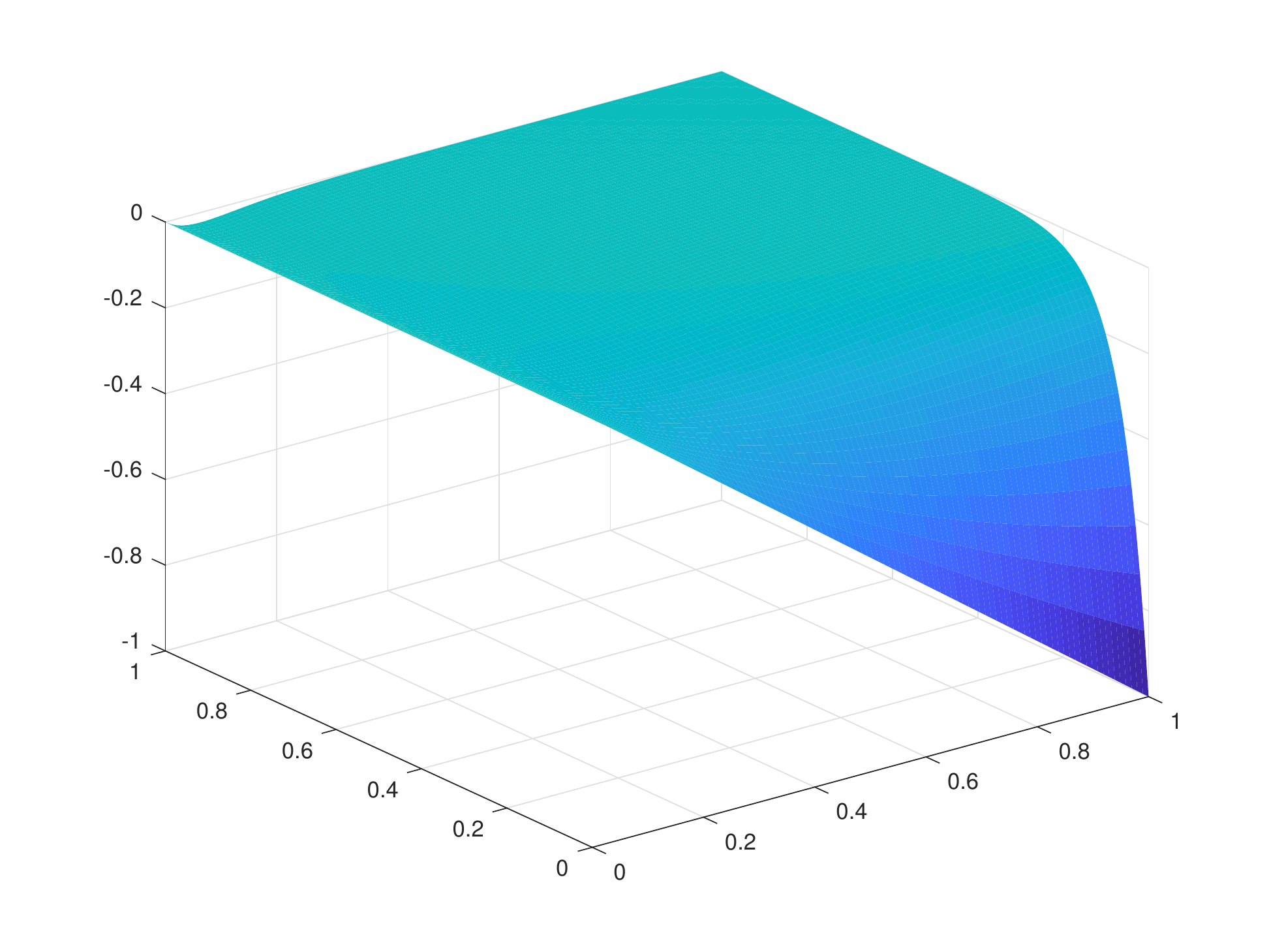}}
\end{minipage}%
\begin{minipage}[t]{0.5\linewidth}
\centerline{\includegraphics[width=2.2in]{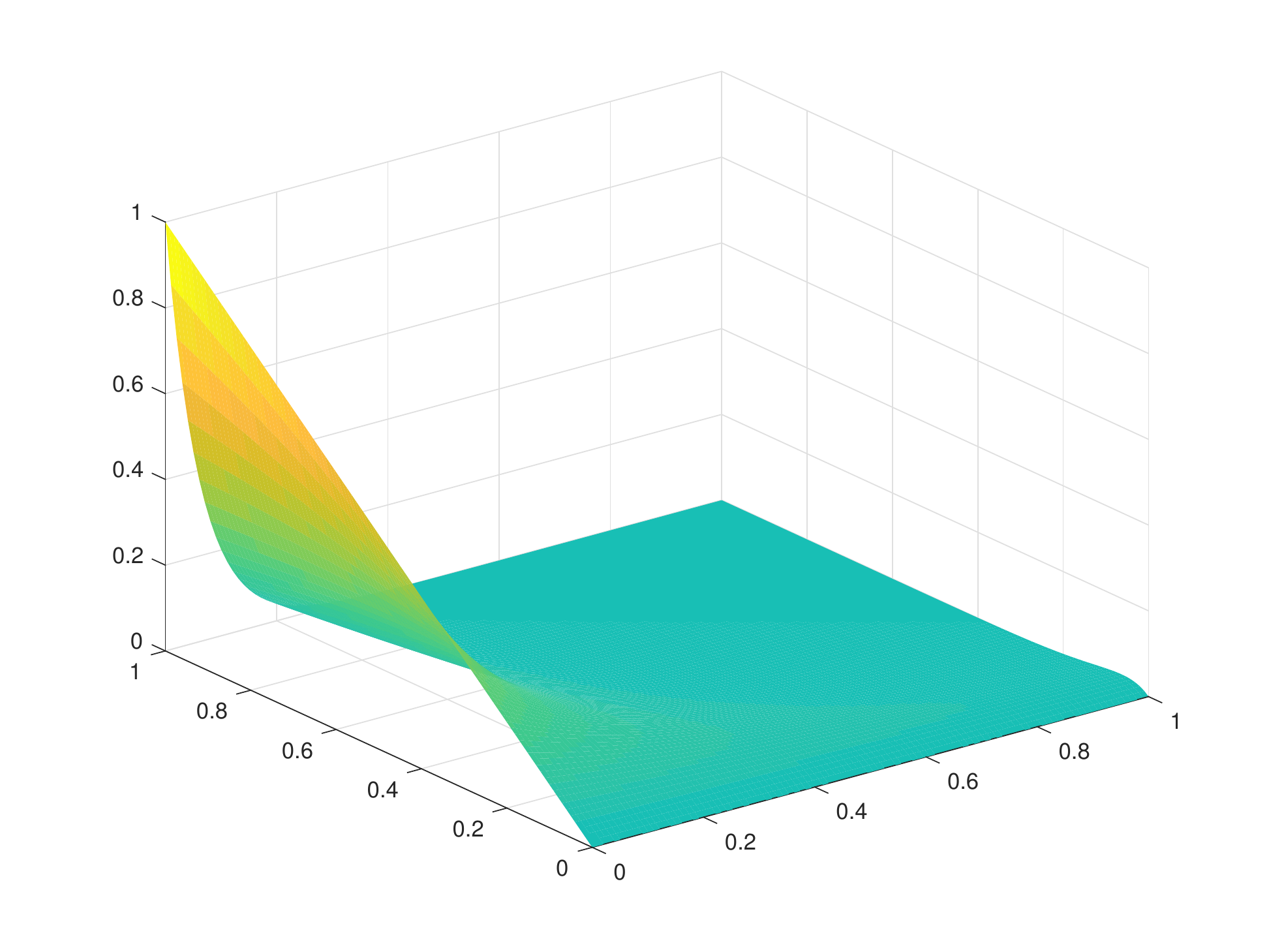}}
\end{minipage}%
\caption{Example~4: $x$-component (left) and $y$-component (right) of the exact velocity with boundary layers when $\varepsilon = 2^{-4}$.}\label{fig: exact velocity bdlayer}
\end{figure}

From Table~\ref{tab:velocity error bdlayer}, the convergence rate of velocity is approximately
one if $\varepsilon$ is sufficiently large, and it decreases to half an order as $\varepsilon$ approaches zero, which is consistent with the analysis in Theorem~\ref{thm:uniform error Brinkman}. From~Table~\ref{tab:pressure error bdlayer}, the discrete pressure exhibits $\mathcal{O}(h)$ order of convergence, higher than the theoretical estimation $\mathcal{O}(h^{1/2})$ order.

\begin{table}[h!!]
	\caption{Example~4 (with boundary layers): Errors of velocity in the energy norm by the NPP element.}
	\begin{tabular}{lllllll}
		\hline\noalign{\smallskip}	
		$\varepsilon \setminus h$ & 2.599E-1 & 1.300E-01 & 6.498E-02 &  3.249E-02  & 1.625E-02 & Rate \\
		\noalign{\smallskip}\hline\noalign{\smallskip}
		$2^{-4}$ & 2.998E-02  & 1.147E-02 & 4.958E-03 & 2.430E-03& 1.228E-03 & 1.15  \\
		$2^{-6}$ & 6.589E-02 & 3.000E-02 & 1.159E-02	 & 4.228E-03	& 1.753E-03 & 1.33  \\
		$2^{-8}$ &1.061E-01	   & 6.246E-02 & 3.238E-02	 & 1.438E-02 & 5.504E-03 & 1.07 \\
		$2^{-10}$ & 1.171E-01 & 7.906E-02 & 5.147E-02  & 3.061E-02 & 1.601E-02 & 0.71  \\
		$2^{-12}$ & 1.234E-01 & 8.455E-02 & 5.688E-02	 & 3.848E-02 & 2.529E-02	 & 0.57  \\
		\noalign{\smallskip}\hline
	\end{tabular}\label{tab:velocity error bdlayer}
\end{table}
\begin{table}[h!!]
	\caption{Example~4 (with boundary layers): Errors of pressure in the $L^{2}$-norm by the NPP element.}
	\begin{tabular}{lllllll}
		\hline\noalign{\smallskip}	
		$\varepsilon \setminus h$ & 2.599E-1 & 1.300E-01 & 6.498E-02 &  3.249E-02  & 1.625E-02 & Rate \\
		\noalign{\smallskip}\hline\noalign{\smallskip}
		$2^{-4}$ & 2.260E-03  & 8.080E-04 & 2.884E-04	 & 1.211E-04 & 5.702E-05 & 1.34   \\
		$2^{-6}$ & 2.779E-03 & 7.880E-04 & 2.696E-04	 & 9.042E-05 & 2.938E-05 & 1.63  \\
		$2^{-8}$ & 6.283E-03  & 2.044E-03 & 5.366E-04	 & 1.273E-04 & 3.448E-05 & 1.90 \\
		$2^{-10}$ & 6.730E-03 & 3.056E-03 & 1.339E-03	  & 4.607E-04 &  1.235E-04 & 1.43  \\
		$2^{-12}$ & 6.710E-03 & 3.044E-03 & 1.440E-03	 & 7.024E-04 & 3.216E-04	 & 1.09  \\
		\noalign{\smallskip}\hline
	\end{tabular}\label{tab:pressure error bdlayer}
\end{table}

\medskip

\paragraph{\bf Example 5} Let $\Omega = (0,1)^{2}$. Consider the incompressible Navier--Stokes equations
\begin{equation}\label{eq:NS eqs}
\left\{	
\begin{split}
\partial_{t}\,\uu - \varepsilon^{2} \Delta\,\uu  + (\uu\cdot \nabla)\,\uu + \nabla p &  = \uf \quad \mbox{in } \Omega, \\ 
 \dv\,\uu  & = 0  \quad  \mbox{in } \Omega, 
\end{split}
\right.
\end{equation}
with prescribed solution 
\begin{align*}
\uu(x,y,t) = 
\left(
\begin{array}{c}
\sin(1-x)\sin(y+t)\\ 
-\cos(1-x)\cos(y+t)
\end{array}
\right);
\quad 
 p = -\cos(1-x)\sin(y+t).
\end{align*}

In this example, the Crank--Nicolson scheme is used for time discretization, and the Newton linearization is adopted to handle the nonlinear term. To isolate the spatial error, let the time-step $dt = 10^{-3}$ and the final time be $10^{-2}$. Unstructured subdivisions illustrated in Example~2 are utilized.

As is depicted in Table~\ref{tab:three method for NS equations: 1e6} with $\varepsilon^{2} = 10^{-6}$, solutions by the TH pair  converge with $\mathcal{O}(h^{3/2})$ order in the $L^{2}$-norm, and by the SV pair they converge with $\mathcal{O}(h^2)$ order. It is analyzed in~\cite{Linke;Rebholz2019} that the TH pair loses order mainly because it is not pressure-robust, while the suboptimal result of the SV pair is due to additional error sources arising from the nonlinear term. The NPP pair exhibits a convergence rate of $\mathcal{O}(h^2)$ order which is consistent with its theoretical analysis, and it gives even a more accurate approximation than the SV pair in this case.

\begin{table}[htbp]
\caption{Example~5 ($\varepsilon^{2} = 10^{-6}$): Errors of velocity in the $L^{2}$-norm.}
	\begin{tabular}{lllllll}	
	\\
	 & TH & & SV & & NPP  & \\
        \noalign{\smallskip}\hline\noalign{\smallskip}
		$h$ & $\|(\uu-\uu{}_{h})(T)\|_{0,\Omega}$ & Rate &  $\|(\uu-\uu{}_{h})(T)\|_{0,\Omega}$ & Rate  & $\|(\uu-\uu{}_{h})(T)\|_{0,\Omega}$ & Rate \\
		\noalign{\smallskip}\hline\noalign{\smallskip}
		2.599E-01	& 1.746E-04 & -- & 1.031E-04 & -- & 	7.128E-05 &--	 \\
		1.300E-01	& 6.006E-05 & 1.54 &	1.362E-05	 & 2.92 & 9.407E-06	& 2.92  \\
		6.498E-02	 & 2.158E-05 & 1.48 	& 1.989E-06 & 2.78 & 1.377E-06 & 2.77 \\
		3.249E-02	 & 7.583E-06 & 1.51 & 	3.561E-07 & 2.48 & 2.488E-07 & 2.47 \\
		1.625E-02	 & 2.524E-06 & 1.59 	& 7.790E-08	& 2.19 & 5.455E-08 & 2.19 \\
		\noalign{\smallskip}\hline
	\end{tabular}
	\label{tab:three method for NS equations: 1e6}
\end{table}

\medskip

\paragraph{\bf Example 6} Let $\Omega = (0,1)^{2}$ be a square domain. Consider the Navier--Stokes equations~\eqref{eq:NS eqs}
with boundary conditions $\uu = (-1,0)^{T}$ on the side $y = 1$ and $\uu = (0,0)^{T}$ on the other three sides.
Take the viscosity as $\varepsilon^{2}= 10^{-3}$. 

The backward-Euler time-stepping scheme and the Picard iteration are adopted for this example. Set the time step to be $dt = 0.1$. Consider a long time simulation with the final time equals $90$ to derive a steady solution. Indeed, as the solution is steady, the choice of time scheme has little influence on the accuracy of the final solution. Referenced data in a benchmark work~\cite{Bruneau;Saad2006} are involved to make a reliable comparison, where the solutions are derived on a rather fine mesh, i.e., $1024\times 1024$ rectangular subdivision of domain $\Omega$. We wish to see whether major features of the steady-state flow can be captured on a coarse mesh  with $43\times 43\times 2$ cells. 

Isolines of the streamfunction, vorticity, and pressure fields are displayed in Figures~\ref{fig:streamfunction},~\ref{fig:vorticity}, and~\ref{fig:pressure}, respectively. Compared with the TH pair, the shapes of contour maps derived by the NPP pair are closer to the reference solution. Specially, the colormap of pressure obtained by the NPP pair and the TH pair are quite different; note the difference between the sidebars. By the values given in~\cite[Table 1]{Bruneau;Saad2006}, the NPP pair method gives more accurate approximation of pressure then the TH pair does.

\begin{figure}[htbp]
\begin{minipage}[c]{0.33\linewidth}
\centerline{\includegraphics[width=1.95in]{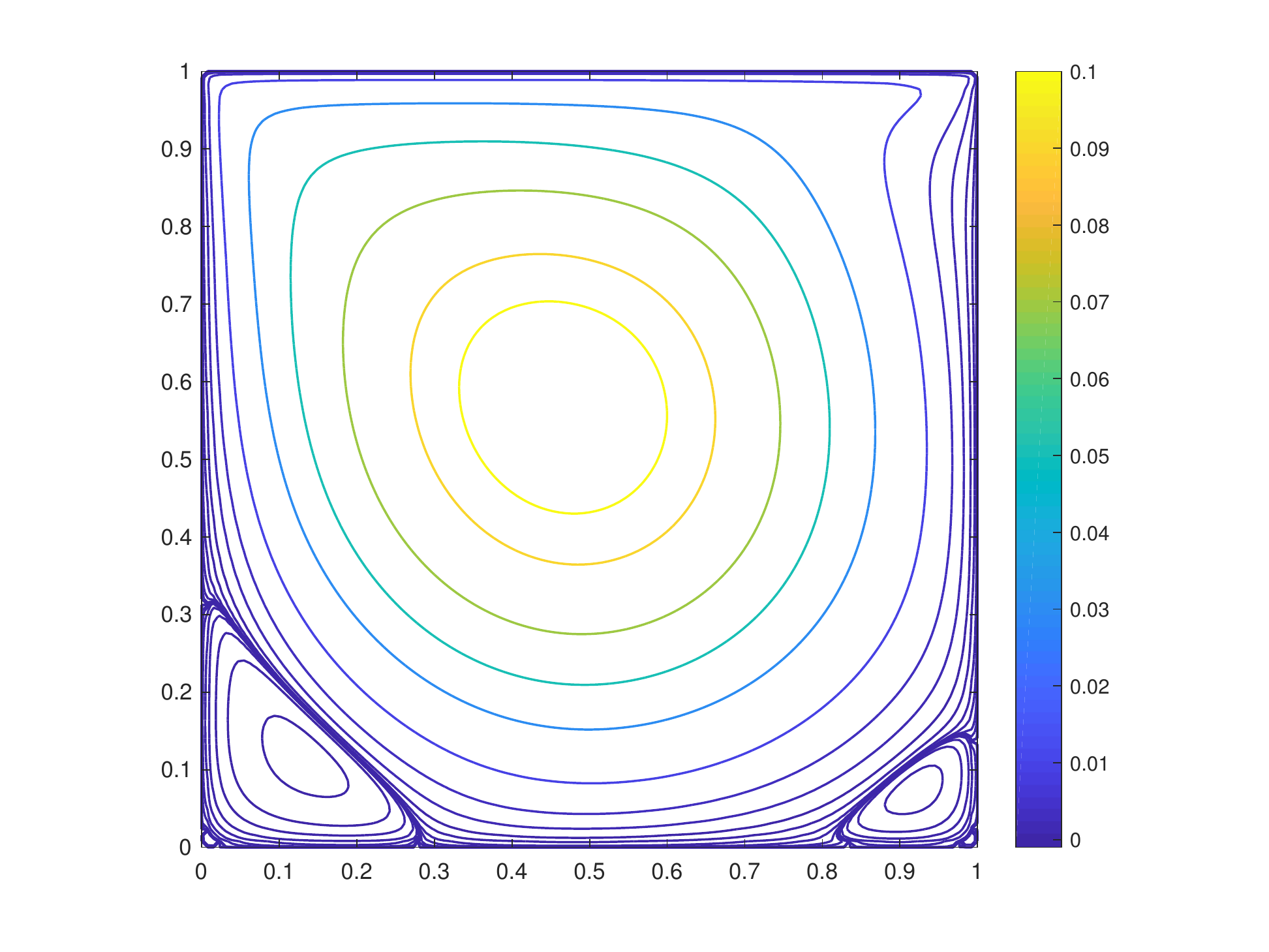}}
\end{minipage}%
\begin{minipage}[c]{0.33\linewidth}
\centerline{\includegraphics[width=1.95in]{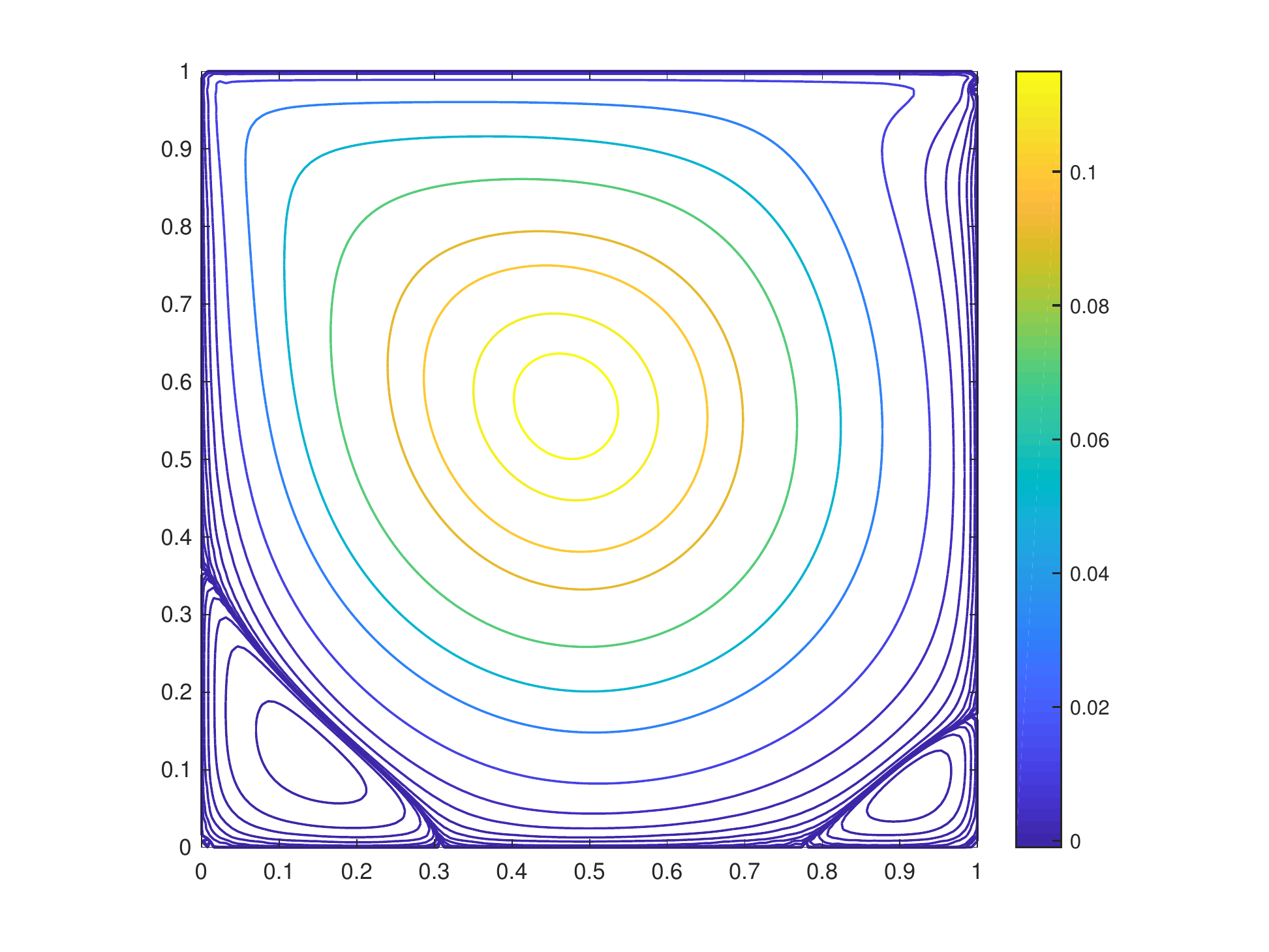}}
\end{minipage}%
\begin{minipage}[c]{0.33\linewidth}
\centerline{\includegraphics[width=1.25in]{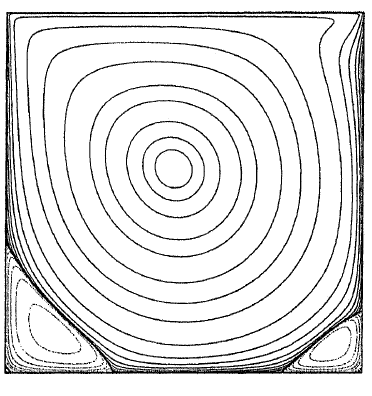}}
\end{minipage}%
\caption{Example~6 (streamfunction): Isolines given by the NPP pair (middle) is closer to the reference solution in~\cite{Bruneau;Saad2006} (right) than the TH pair (left).}\label{fig:streamfunction}
\end{figure}

\begin{figure}[htbp]
\begin{minipage}[c]{0.33\linewidth}
\centerline{\includegraphics[width=1.95in]{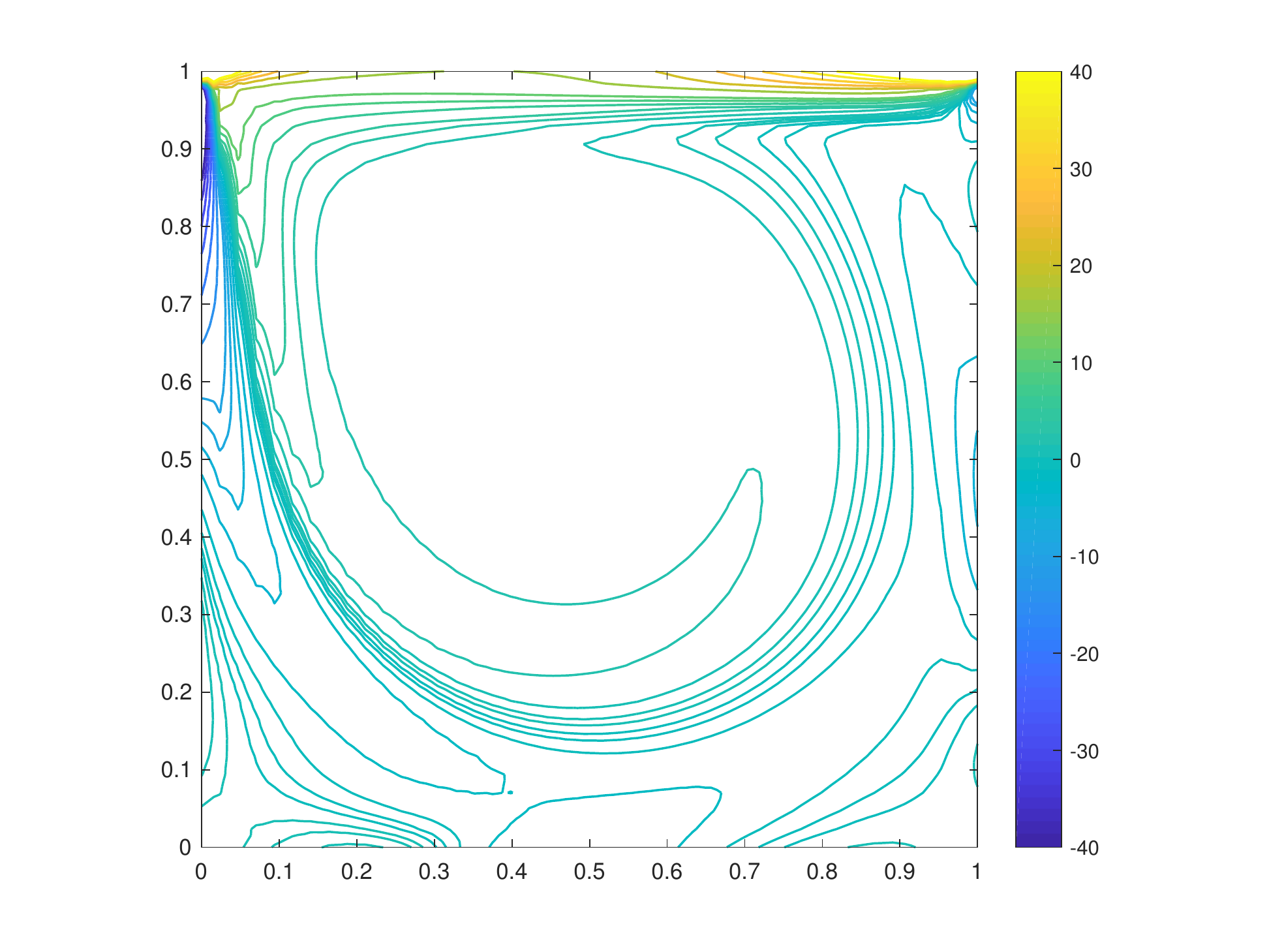}}
\end{minipage}%
\begin{minipage}[c]{0.33\linewidth}
\centerline{\includegraphics[width=1.95in]{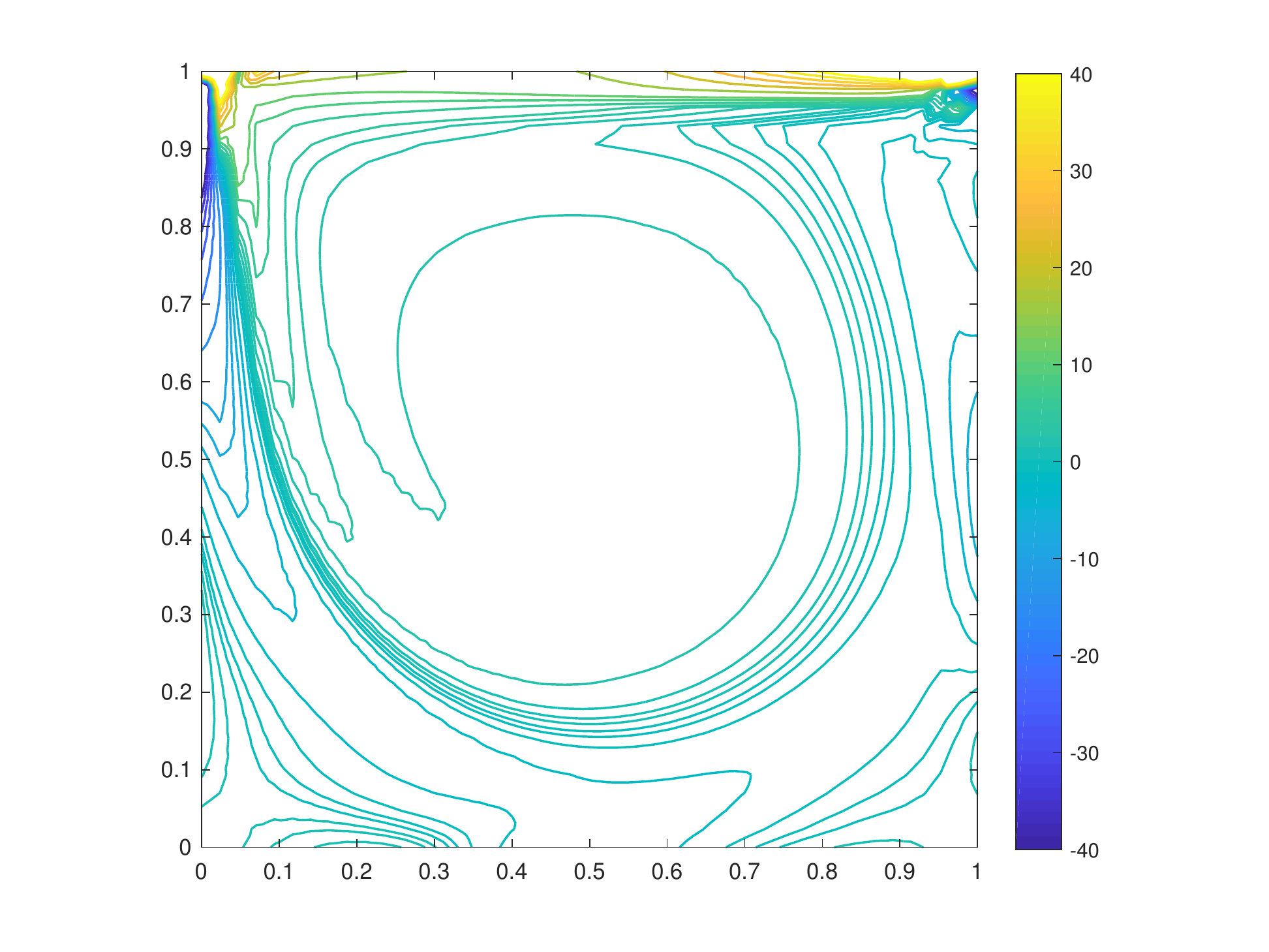}}
\end{minipage}%
\begin{minipage}[c]{0.33\linewidth}
\centerline{\includegraphics[width=1.25in]{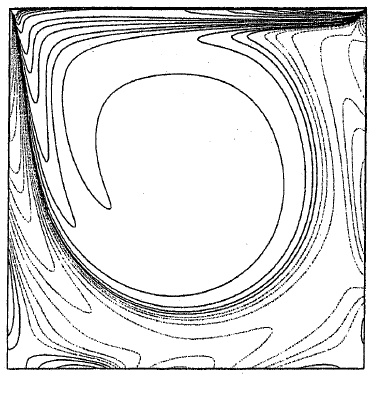}}
\end{minipage}%
\caption{Example~6 (vorticity): Isolines derived by the NPP pair (middle) is closer to the reference solution in~\cite{Bruneau;Saad2006} (right) than the TH pair (left).}\label{fig:vorticity}
\end{figure}
\begin{figure}[h!!]
\begin{minipage}[c]{0.33\linewidth}
\centerline{\includegraphics[width=1.95in]{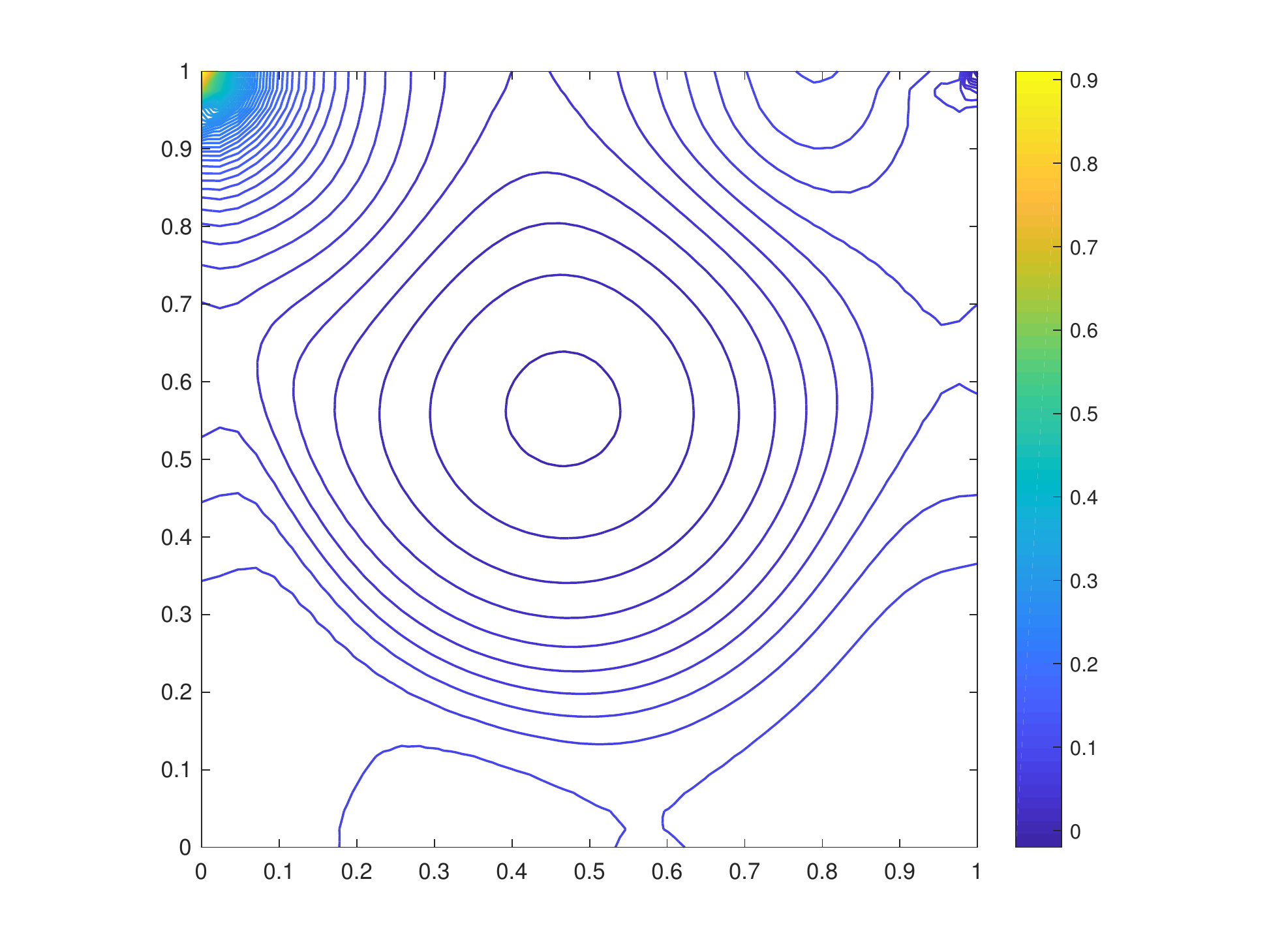}}
\end{minipage}%
\begin{minipage}[c]{0.33\linewidth}
\centerline{\includegraphics[width=1.95in]{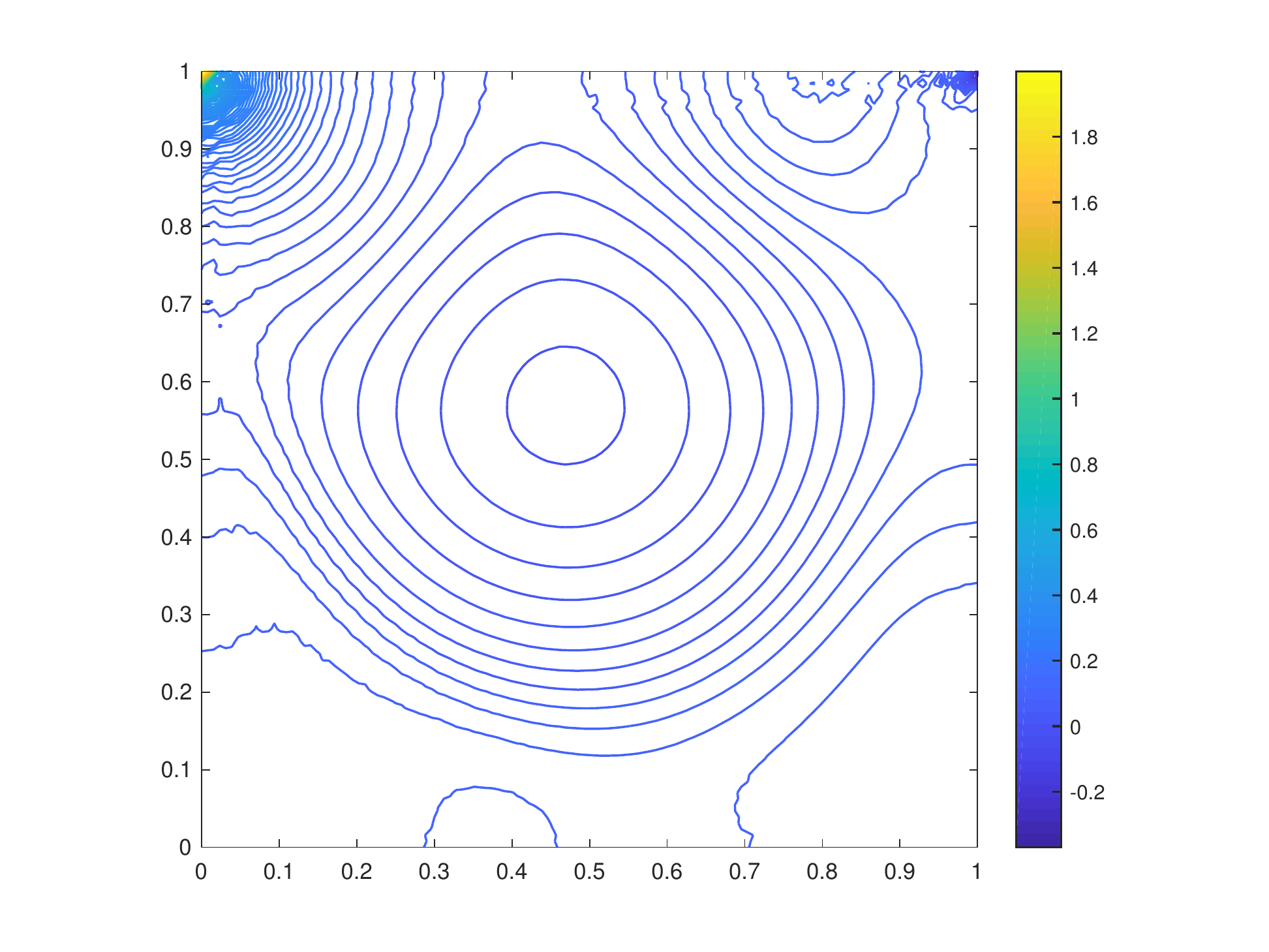}}
\end{minipage}%
\begin{minipage}[c]{0.33\linewidth}
\centerline{\includegraphics[width=1.25in]{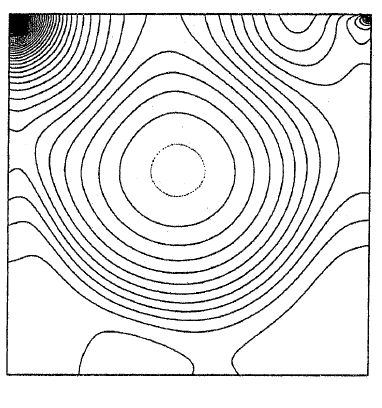}}
\end{minipage}%
\caption{Example~6 (pressure): The extreme values of the pressure by the TH pair (left) is notably different with these by the NPP pair (middle), and the latter is closer to the  reference values given in~\cite{Bruneau;Saad2006}(right).}\label{fig:pressure}
\end{figure}

The velocity along the centerlines of the cavity is also an important quantity of concern. We can see, from Figure~\ref{fig:velocity profile}, that the results computed by the NPP pair are in better agreement with the reference results in Ref.~\cite{Bruneau;Saad2006} than the TH pair.
\begin{figure}[h!!]
\begin{minipage}[t]{0.5\linewidth}
\centerline{\includegraphics[width=2.55in]{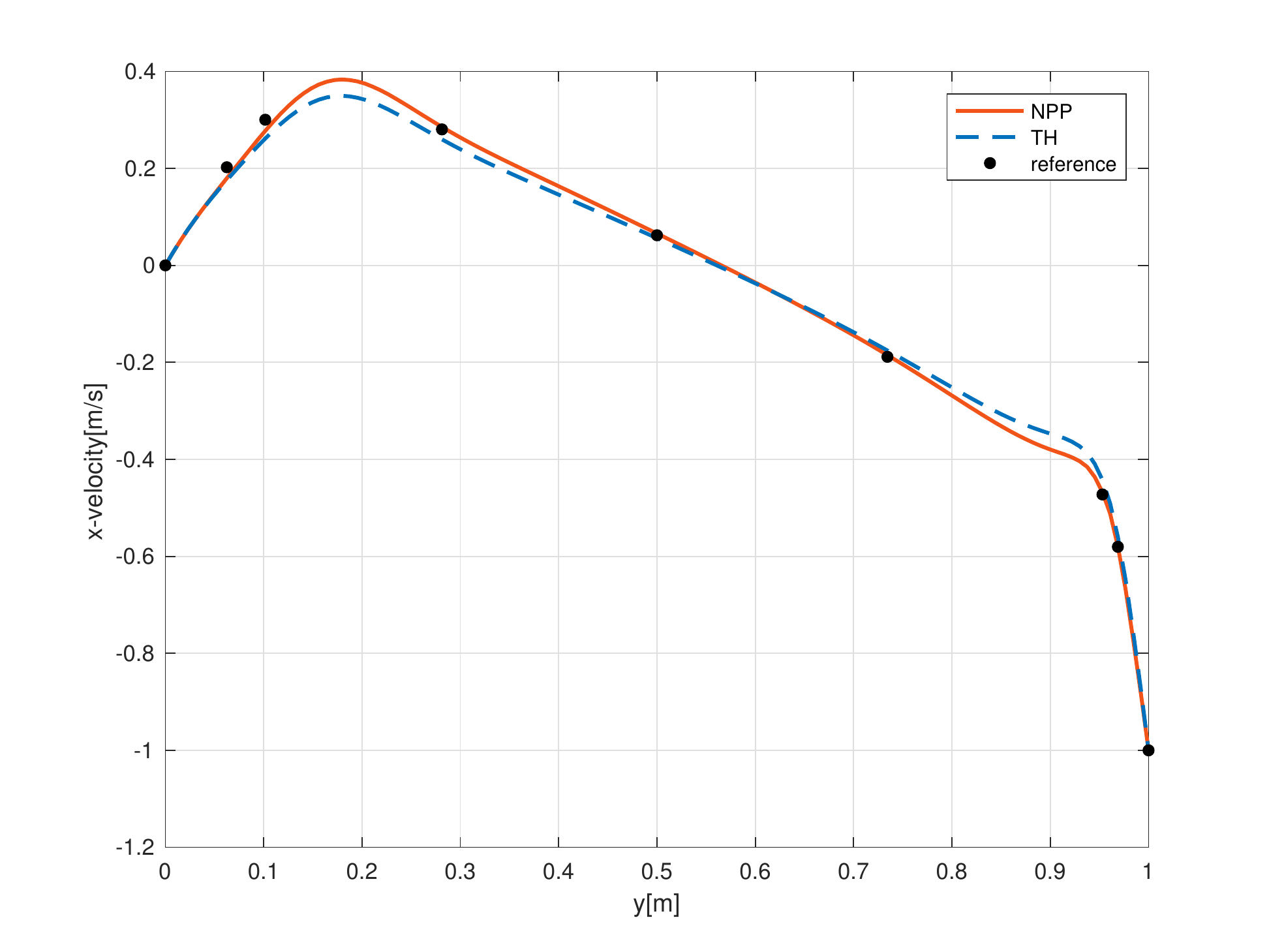}}
\end{minipage}%
\begin{minipage}[t]{0.5\linewidth}
\centerline{\includegraphics[width=2.55in]{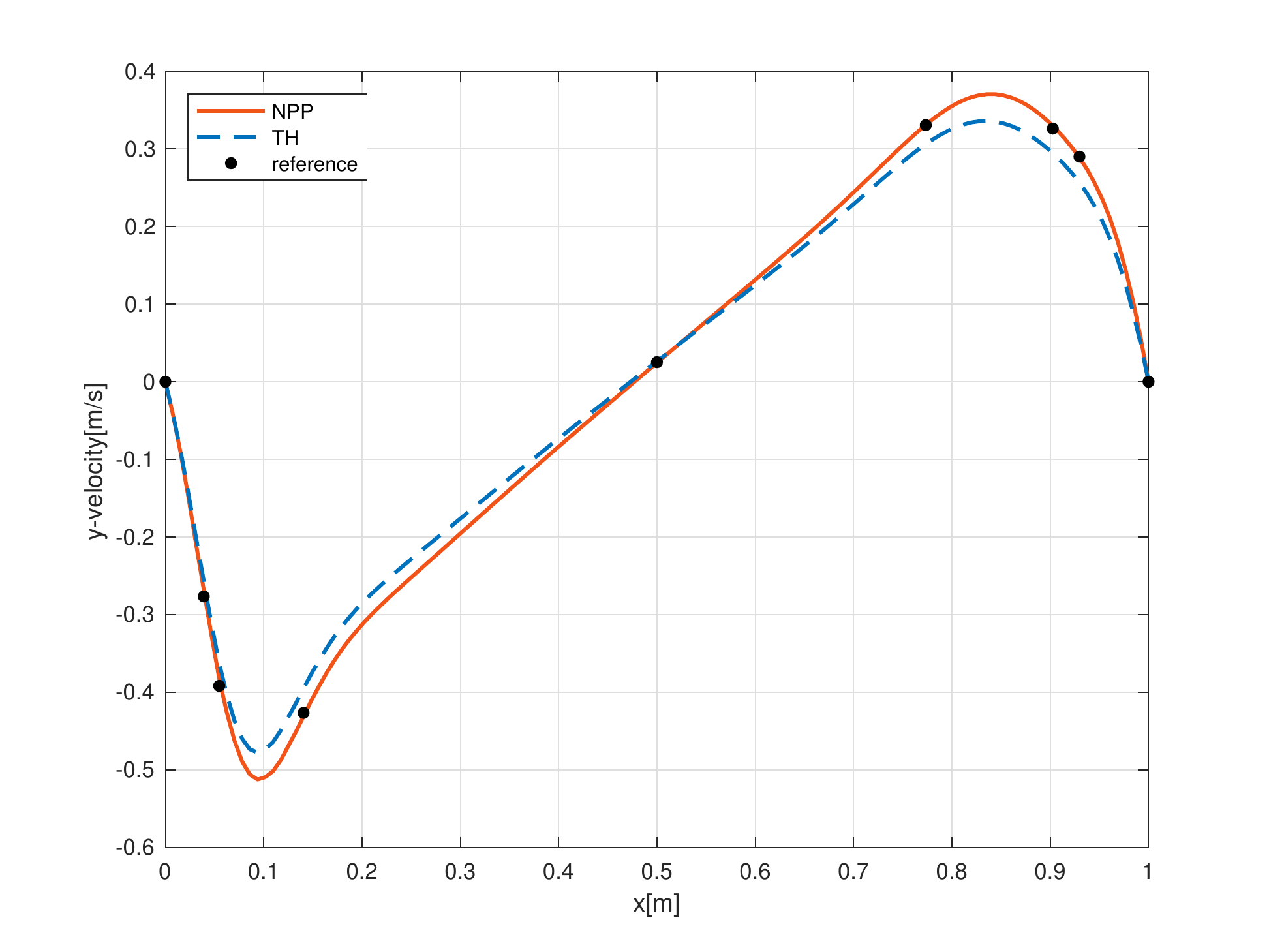}}
\end{minipage}%
\caption{Example~6 (velocity profile): $x$-velocity through the vertical centerline $x=0.5$ (left), and $y$-velocity through the horizontal centerline $y = 0.5$  (right).}\label{fig:velocity profile}
\end{figure}

Moreover, the extremes of the streamfunction and the vorticity are depicted in Tables~\ref{tab:streamfunction} and \ref{tab:vorticity}, respectively. Both of them indicate that the NPP pair gives closer results to the benchmark reference results. 

\begin{table}[h!!]
\caption{Example 6 (streamfunction): Values on the primary and the lower-left secondary vortices.}
\begin{tabular}{llllllll}
\hline\noalign{\smallskip}
Scheme & Mesh &  Primary &  $x$ &  $y$  & Secondary & $x$ &  $y$ \\
\noalign{\smallskip}\hline\noalign{\smallskip}
TH & $43\times 43$&  1.0862E-01 & 0.4688 & 0.5703 & -1.3882E-03 & 0.1328 & 0.1094  \\
NPP & $43\times 43$& 1.1733E-01 & 0.4688 & 0.5703 & -1.6221E-03 & 0.1406 & 0.1094  \\
Ref. &  $1024\times 1024$ &1.1892E-01 & 0.4688 & 0.5654 & -1.7292E-03 & 0.1367 & 0.1123 \\
\noalign{\smallskip}\hline
 \end{tabular}\label{tab:streamfunction}
\end{table}

\begin{table}[h!!]
\caption{Example 6 (vorticity): Values on the primary and the lower-left secondary vortices.}
\begin{tabular}{llllllll}
\hline\noalign{\smallskip}
Scheme & Mesh &  Primary &  $x$ &  $y$ & Secondary & $x$ &  $y$ \\
\noalign{\smallskip}\hline\noalign{\smallskip}
TH & $43\times 43$&1.8976E+00 & 0.4688 & 0.5703 & -9.1294E-01 & 0.1328 & 0.1094  \\
NPP & $43\times 43$& 2.0615E+00 & 0.4688 & 0.5703 & -9.8718E-01 & 0.1406 & 0.1094  \\
Ref. &  $1024\times 1024$ & 2.0674E+00 & 0.4688 & 0.5654 & -1.1120E+00 & 0.1367 & 0.1123 \\
\noalign{\smallskip}\hline
 \end{tabular}\label{tab:vorticity}
\end{table}

\appendix

\section{Dimension of the local space ${\rm ker}(\dv,\protect\uV{}_{h0}(M))$}
\label{sec:app}
This appendix is devoted to analyze the basis functions of ${\rm ker}(\dv,\uV{}_{h0}(M))$ defined on a macroelement $M$. Lemma~\ref{lem:localkernel} is proved at the end of this section.

\subsection{Local structure of divergence-free functions}


\noindent Let $T$ be a triangle with nodes $\{a_{i},a_{j},a_{k}\}$ and edges $\{e_{i},e_{j},e_{k}\}$. Denote $\mathbf{n}_{T,e_{l}}$ as a unit outward vector normal to $e_{l}$ and $\mathbf{t}_{T,e_{l}}$ as a unit tangential vector of $e_{l}$ such that $\mathbf{n}_{T,e_{l}} \times \mathbf{t}_{T,e_{l}} >0$, where $l \in \{i,\ j,\ k\}$. Denote the lengths of edges by $\{l_{i},l_{j},l_{k}\}$, and the area of $T$ by $S$. 
\begin{figure}[htbp]
\begin{minipage}[t]{0.5\linewidth}
\centerline{\includegraphics[width=1.9in]{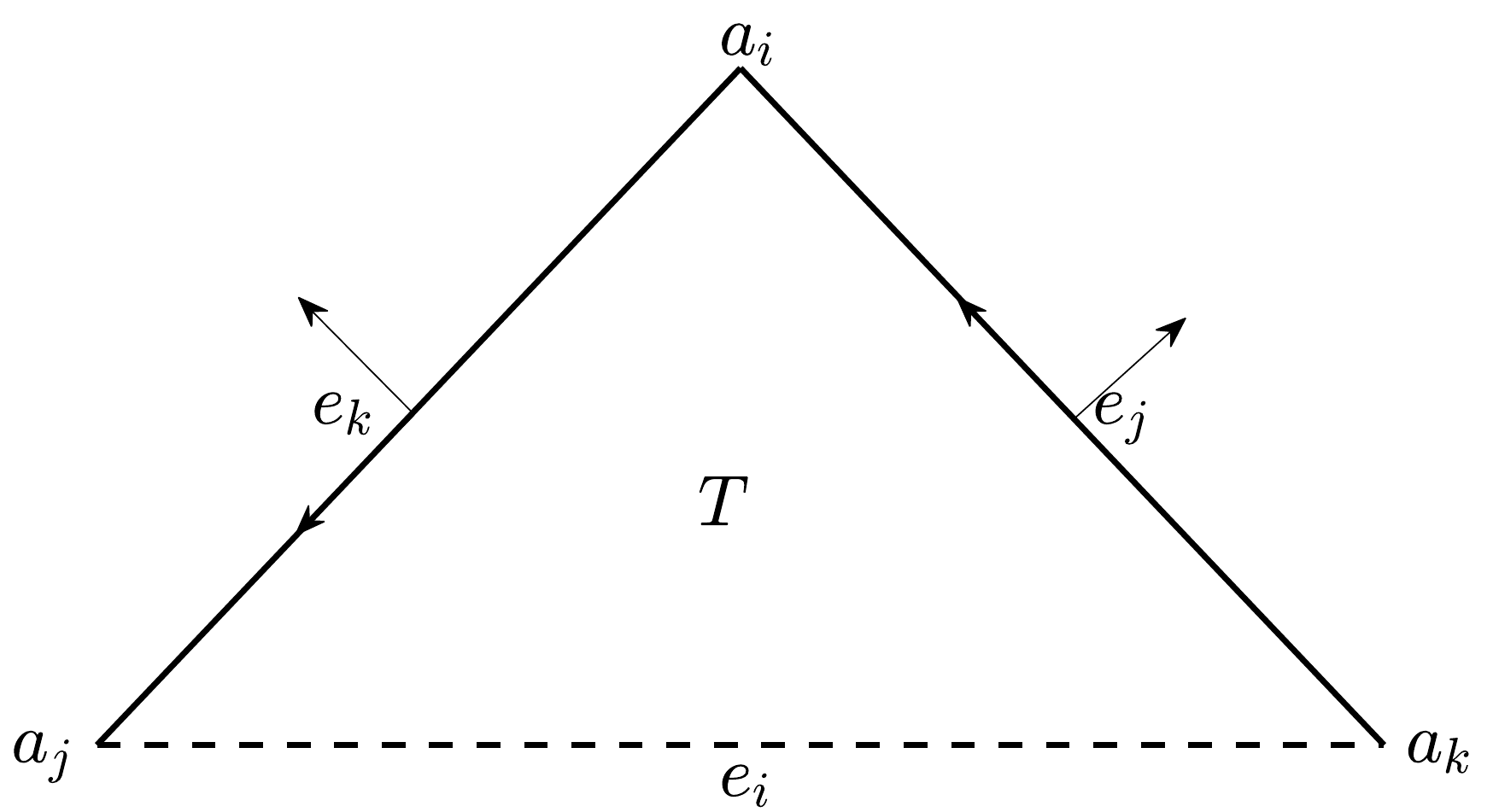}}
\end{minipage}%
\begin{minipage}[t]{0.5\linewidth}
\centerline{\includegraphics[width=1.9in]{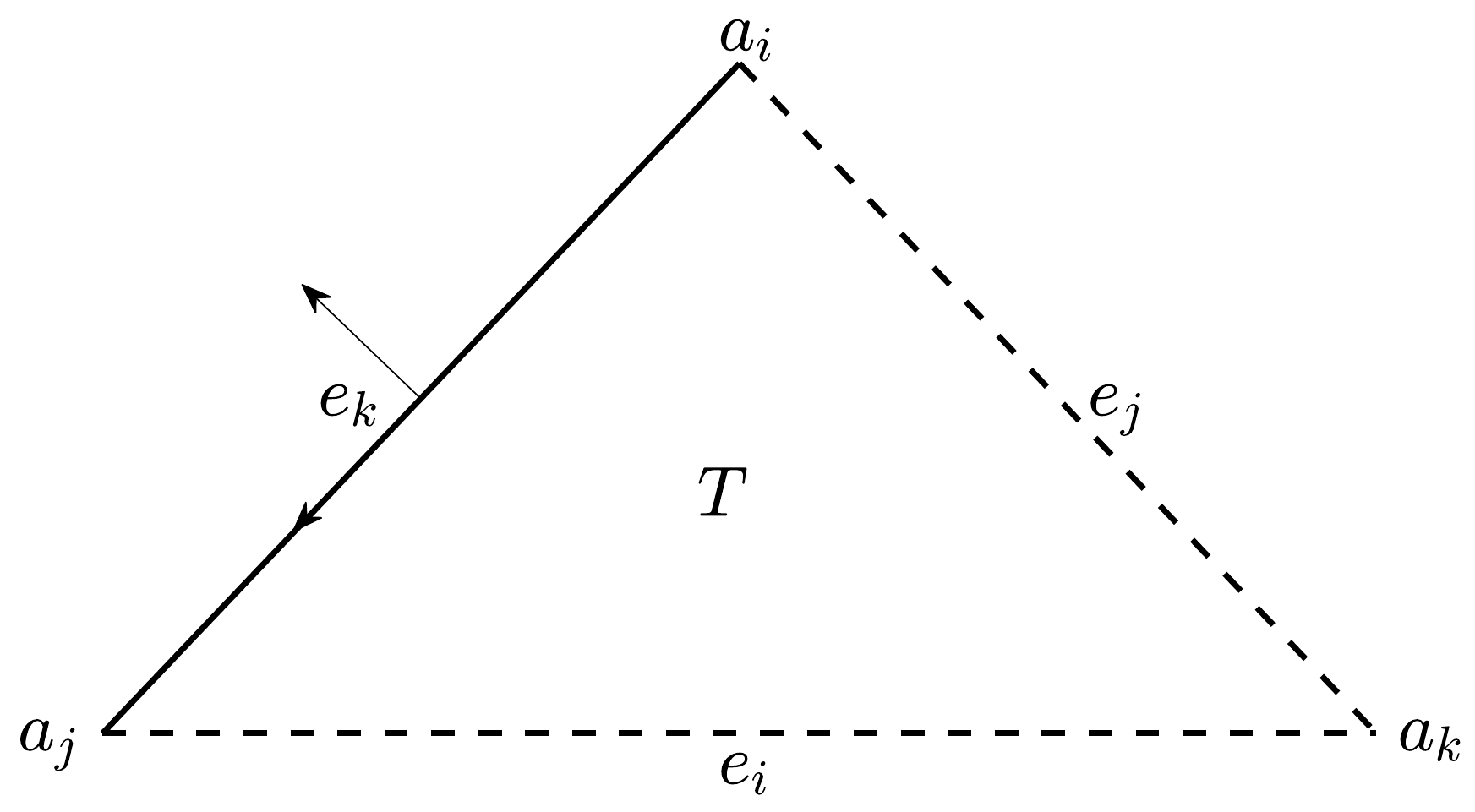}}
\end{minipage}%
\caption{Degrees of freedom vanish on dotted edges.}\label{fig:T vanish on two edge}
\end{figure}

Denote 
\begin{equation}
\uW{}_{T,e_{j}e_{k}}:= \Big\{\uv\in (P_2(T))^2:\dv\,\uv=0,\ \int_{e_i}\uv\cdot\mathbf{t}_{e_i}=0,\int_{e_i}\uv\cdot\mathbf{n}_{e_i}q=0,\forall\,q\in P_2(e_i)\Big\},
\end{equation} namely, $\uW{}_{T,e_{j}e_{k}}$ consists of quadratic polynomials that are divergence-free and all nodal parameters associated with $e_{i}$ equal to zero. Denote 
\begin{equation}
\uW{}_{T,e_{j}e_{k}}^\mathbf{n}:= \Big\{\uv\in \uW{}_{T,e_{j}e_{k}}:\int_{e_{j}}\uv\cdot\mathbf{n}_{e_{j}}=\int_{e_{k}}\uv\cdot\mathbf{n}_{e_k}=0\Big\}
\end{equation}
and
\begin{equation}
\uW{}_{T,e_{k}}:=\uW{}_{T,e_{j}e_{k}}\cap \uW{}_{T,e_{k}e_{i}}.
\end{equation} 

Direct calculation leads to the following results.
\begin{lemma} 
$\dim(\uW{}_{T,e_{k}})=1$, $\dim(\uW{}_{T,e_{j}e_{k}}^\mathbf{n})=4$, and $\dim(\uW{}_{T,e_{j}e_{k}})=5$.
\end{lemma}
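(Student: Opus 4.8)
The plan is to identify divergence-free quadratic vector fields with rotated gradients of cubic stream functions, translate the four edge degrees of freedom into pointwise and integral conditions on that stream function, and then read off each dimension from elementary polynomial divisibility. First I would record the basic correspondence: a field $\uv\in(P_2(T))^2$ satisfies $\dv\,\uv=0$ if and only if $\uv=(\partial_y\psi,-\partial_x\psi)$ for some $\psi\in P_3(T)$, and such $\psi$ is unique up to an additive constant. Since $\dim P_3(T)=10$ and the map $\psi\mapsto(\partial_y\psi,-\partial_x\psi)$ has kernel exactly the constants, the space of divergence-free fields in $(P_2(T))^2$ has dimension $9$. On any edge $e$ with unit tangent $\mathbf{t}_e$ and unit normal $\mathbf{n}_e$ one has $\uv\cdot\mathbf{n}_e=\pm\,\partial_{\mathbf{t}_e}\psi$ and $\uv\cdot\mathbf{t}_e=\pm\,\partial_{\mathbf{n}_e}\psi$, with signs depending only on orientation. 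Hence the three normal moments against $P_2(e)$ vanish exactly when $\uv\cdot\mathbf{n}_e\equiv0$ on $e$, i.e. when $\psi|_e$ is constant, while the tangential moment $\int_e\uv\cdot\mathbf{t}_e\ud s$ vanishes exactly when $\int_e\partial_{\mathbf{n}_e}\psi\ud s=0$.

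For $\uW{}_{T,e_je_k}$, whose defining constraints set all nodal parameters on $e_i$ (the edge $\lambda_i=0$) to zero, I would use the additive constant to normalize $\psi|_{e_i}=0$, so that $\psi=\lambda_i q$ with $q\in P_2(T)$; the assignment $q\mapsto\uv$ is injective because $\lambda_i q$ constant forces $\lambda_i q=0$, hence $q=0$. This already yields a six-dimensional family. The remaining tangential constraint $\int_{e_i}\uv\cdot\mathbf{t}_{e_i}\ud s=0$ reduces, using $\lambda_i|_{e_i}=0$, to $\int_{e_i}q\ud s=0$, a single nontrivial condition on $q$ (nontrivial since $q\equiv1$ violates it). Thus $\dim\uW{}_{T,e_je_k}=6-1=5$.

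For $\uW{}_{T,e_je_k}^{\mathbf{n}}$ I would invoke the divergence-theorem identity $\sum_l\int_{e_l}\uv\cdot\mathbf{n}_{T,e_l}\ud s=\int_T\dv\,\uv\,\ud T=0$: since the $e_i$-contribution already vanishes, the two imposed conditions $\int_{e_j}\uv\cdot\mathbf{n}_{e_j}=\int_{e_k}\uv\cdot\mathbf{n}_{e_k}=0$ collapse to the single condition $\int_{e_j}\uv\cdot\mathbf{n}_{e_j}=0$, which in stream-function form reads $\psi(a_i)=0$, i.e. $q(a_i)=0$. As $\int_{e_i}q\ud s=0$ and $q(a_i)=0$ are independent functionals on $P_2(T)$ (the $P_2$-vertex function at $a_i$ satisfies the first but not the second), $\dim\uW{}_{T,e_je_k}^{\mathbf{n}}=6-2=4$. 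For $\uW{}_{T,e_k}=\uW{}_{T,e_je_k}\cap\uW{}_{T,e_ke_i}$, both $\psi|_{e_i}$ and $\psi|_{e_j}$ are constant and agree at the shared vertex $a_k$; normalizing that common value to zero gives $\psi=\lambda_i\lambda_j\ell$ with $\ell\in P_1(T)$, a three-dimensional family on which $\ell\mapsto\uv$ is again injective. The two tangential conditions then become $\int_{e_i}\lambda_j\ell\,\ud s=\int_{e_j}\lambda_i\ell\,\ud s=0$, leaving $\dim\uW{}_{T,e_k}=3-2=1$.

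The only steps that are not purely formal are the non-degeneracy claims — that each newly imposed edge functional is independent of the previous ones on the relevant polynomial space, most delicately that $\ell\mapsto\int_{e_i}\lambda_j\ell\,\ud s$ and $\ell\mapsto\int_{e_j}\lambda_i\ell\,\ud s$ are independent on $P_1(T)$. I expect this to be the main (though routine) obstacle, since it cannot be settled by dimension counting alone: I would evaluate the barycentric integrals once on the reference triangle and transport the conclusion by affine invariance and Piola equivalence. I would also remark that the base count $9$ rests on the surjectivity of $\dv:(P_2(T))^2\to P_1(T)$ and on $\curl$ having kernel exactly the constants, both immediate.
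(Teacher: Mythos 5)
Your proposal is correct, and it takes a genuinely different route from the paper. The paper offers no structural argument at all: it asserts the dimensions via ``direct calculation'' and substantiates them by exhibiting explicit spanning sets ($\uw{}_{T,e_{j}}$, $\uw{}_{T,e_{j},e_{k}}$, $\uw{}_{T,e_{k},e_{j}}$, $\uw{}_{T,e_{k}}$, $\uw{}_{T,a_{i}}$) written out in the nodal basis \eqref{eq:basis on T}. You instead exploit the exactness of the local Stokes complex: $\dv$-free fields in $(P_{2}(T))^{2}$ are exactly $\curl$'s of $P_{3}(T)$ stream functions, the normal-moment DOFs on $e_{i}$ force $\psi|_{e_{i}}$ constant (hence divisibility $\psi=\lambda_{i}q$, resp.\ $\psi=\lambda_{i}\lambda_{j}\ell$ after normalizing the additive constant), the tangential DOF becomes $\int_{e}\partial_{\mathbf{n}}\psi\ud s=0$, and the flux condition collapses via the divergence theorem to the single vertex condition $q(a_{i})=0$. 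All of your non-degeneracy checks go through: $\lambda_{i}(2\lambda_{i}-1)$ separates $\int_{e_{i}}q\ud s=0$ from $q(a_{i})=0$, and in barycentric coordinates the two functionals $\ell\mapsto\int_{e_{i}}\lambda_{j}\ell\ud s$ and $\ell\mapsto\int_{e_{j}}\lambda_{i}\ell\ud s$ evaluate (with $\ell=\alpha\lambda_{i}+\beta\lambda_{j}+\gamma\lambda_{k}$) to multiples of $\beta/3+\gamma/6$ and $\alpha/3+\gamma/6$, which are visibly independent, so $\dim(\uW{}_{T,e_{k}})=3-2=1$ as claimed (affine invariance suffices here; Piola is not actually needed for these scalar conditions). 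What each approach buys: yours is shorter, conceptual, and consonant with the discretized Stokes complex the paper itself advertises in the introduction; the paper's computation, though opaque, produces the explicit basis functions that are indispensable later in the appendix (Lemmas~\ref{lem:two cell 2 dim}--\ref{lem:dim m} manipulate $\uw{}_{T,e_{k}}$ and $\uw{}_{T,e_{j},e_{k}}$ coefficient-by-coefficient), so your argument would establish this lemma but could not replace the explicit formulas in the subsequent sweeping arguments without an extra step recovering bases, e.g.\ as $\curl(\lambda_{i}\lambda_{j}\ell)$ for suitable $\ell$.
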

A set of basis functions can be constructed explicitly for the local spaces. Recall that $\uphi{}_{\mathbf{n}_{T,e_{l}},0}$, $\uphi{}_{\mathbf{n}_{T,e_{l}},1}$, $\uphi{}_{\mathbf{n}_{T,e_{l}},2}$, and $\uphi{}_{\mathbf{t}_{T,e_{l}},0}$ represent the basis functions on $e_{l}$; $l\in \{i,\ j,\ k\}$. 
Let 
\begin{equation*}
\begin{split}
& \uw{}_{T,e_{j}} = \frac{2S}{3l_{j}^{2}} \uphi{}_{\mathbf{n}_{T,e_{j}},1} - \uphi{}_{\mathbf{t}_{T,e_{j}},0}, \quad \ \
\uw{}_{T,e_{j},e_{k}} = -\frac{1}{3l_{j}}\uphi{}_{\mathbf{n}_{T,e_{j}},1} + \frac{1}{10l_{j}}\uphi{}_{\mathbf{n}_{T,e_{j}},2} - \frac{2}{3l_{k}}\uphi{}_{\mathbf{n}_{T,e_{k}},1}, \\
& \uw{}_{T,e_{k}} = -\frac{2S}{3l_{k}^{2}} \uphi{}_{\mathbf{n}_{T,e_{k}},1} + \uphi{}_{\mathbf{t}_{T,e_{k}},0}, \quad 
\uw{}_{T,e_{k},e_{j}} = -\frac{2}{3l_{j}}\uphi{}_{\mathbf{n}_{T,e_{j}},1} - \frac{1}{3l_{k}}\uphi{}_{\mathbf{n}_{T,e_{k}},1} - \frac{1}{10l_{k}}\uphi{}_{\mathbf{n}_{T,e_{k}},2}, \\
& \uw{}_{T,a_{i}} = -\frac{1}{l_{j}}\uphi{}_{\mathbf{n}_{T,e_{j}},0} + 
\frac{ -l_{i}^{2}l_{j}^{2} + 2l_{i}^{2}l_{k}^{2} + l_{j}^{4} + 3l_{j}^{2}l_{k}^{2} - 2l_{k}^{4} }{ 12l_{j}^{3} l_{k}^{2} }
\uphi{}_{\mathbf{n}_{T,e_{j}},1} + 
\frac{l_{i}^{2}-l_{j}^{2}+3l_{k}^{2}}{ 40l_{j} l_{k}^{2} }
\uphi{}_{\mathbf{n}_{T,e_{j}},2} + \frac{1}{l_{k}}\uphi{}_{\mathbf{n}_{T,e_{k}},0}.
\end{split}
\end{equation*}
Then 
\begin{align*}
& \uW{}_{T,e_{j}e_{k}}^{\mathbf{n}} ={\rm span} \big\{\uw{}_{T,e_{j}},\ \uw{}_{T,e_{j},e_{k}}, \ \uw{}_{T,e_{k},e_{j}},\ \uw{}_{T,e_{k}}\big\}, \quad 
\uW{}_{T,e_{k}} = {\rm span} \big\{\uw{}_{T,e_{k}}\big\},\\
& \mbox{and}\ \uW{}_{T,e_{j}e_{k}} ={\rm span} \big\{\uw{}_{T,e_{j}},\ \uw{}_{T,e_{j},e_{k}}, \ \uw{}_{T,e_{k},e_{j}},\ \uw{}_{T,e_{k}},\ \uw{}_{T,a_{i}}\big\}.
\end{align*}

\subsection{Divergence-free functions on sequentially connected cells}
Let $T_1$ and $T_2$ be two adjacent cells such that $\overline{T_1}\cap \overline{T}_2=e_2$. Denote 
\begin{multline*}
\qquad\quad\uV{}_{h}(T_1\cup T_2) := \Big\{\uv{}_{h} \in \uL{}^{2}(T_1\cup T_2): \  \uv{}_{h}{}|_{T_{l}} \in \big(P_{2}(T_{l})\big)^{2}, \  l = 1,\ 2, \ 
\\ 
\uv{}_{h}\cdot \mathbf{n}_{e}  \mbox{ and }  \fint_{e} \uv{}_{h}\cdot \mathbf{t}_{e} \ud s\ \mbox{ are continuous across }\ e_{2} \Big\},\qquad
\end{multline*}
\begin{align*}
& \uV{}_{h0}(T_1\cup T_2):= \Big\{\uv{}_{h} \in \uV{}_h(T_1\cup T_2): \ \uv{}_{h}\cdot \mathbf{n}_{e}  \mbox{ and }  \fint_{e} \uv{}_{h}\cdot \mathbf{t}_{e}  \mbox{ vanish on } \partial(T_{1}\cup T_{2})\Big\},\\
& \uV{}_{h,e_{3}}(T_1\cup T_2):= \Big\{\uv{}_{h} \in \uV{}_h(T_1\cup T_2): \ \uv{}_{h}\cdot \mathbf{n}_{e}  \mbox{ and }  \fint_{e} \uv{}_{h}\cdot \mathbf{t}_{e}  \mbox{ vanish on } \partial(T_{1}\cup T_{2})\big\backslash e_{3}\Big\}.
\end{align*}
\begin{lemma}\label{lem:two cell 2 dim}
It holds that $\dim(\uV{}_{h0}(T_1\cup T_2))=0$ and $\dim(\uV{}_{h,e_3}(T_1\cup T_2))=2$.
\begin{figure}[htbp]
\begin{minipage}[t]{0.5\linewidth}
\centerline{\includegraphics[width=1.95in]{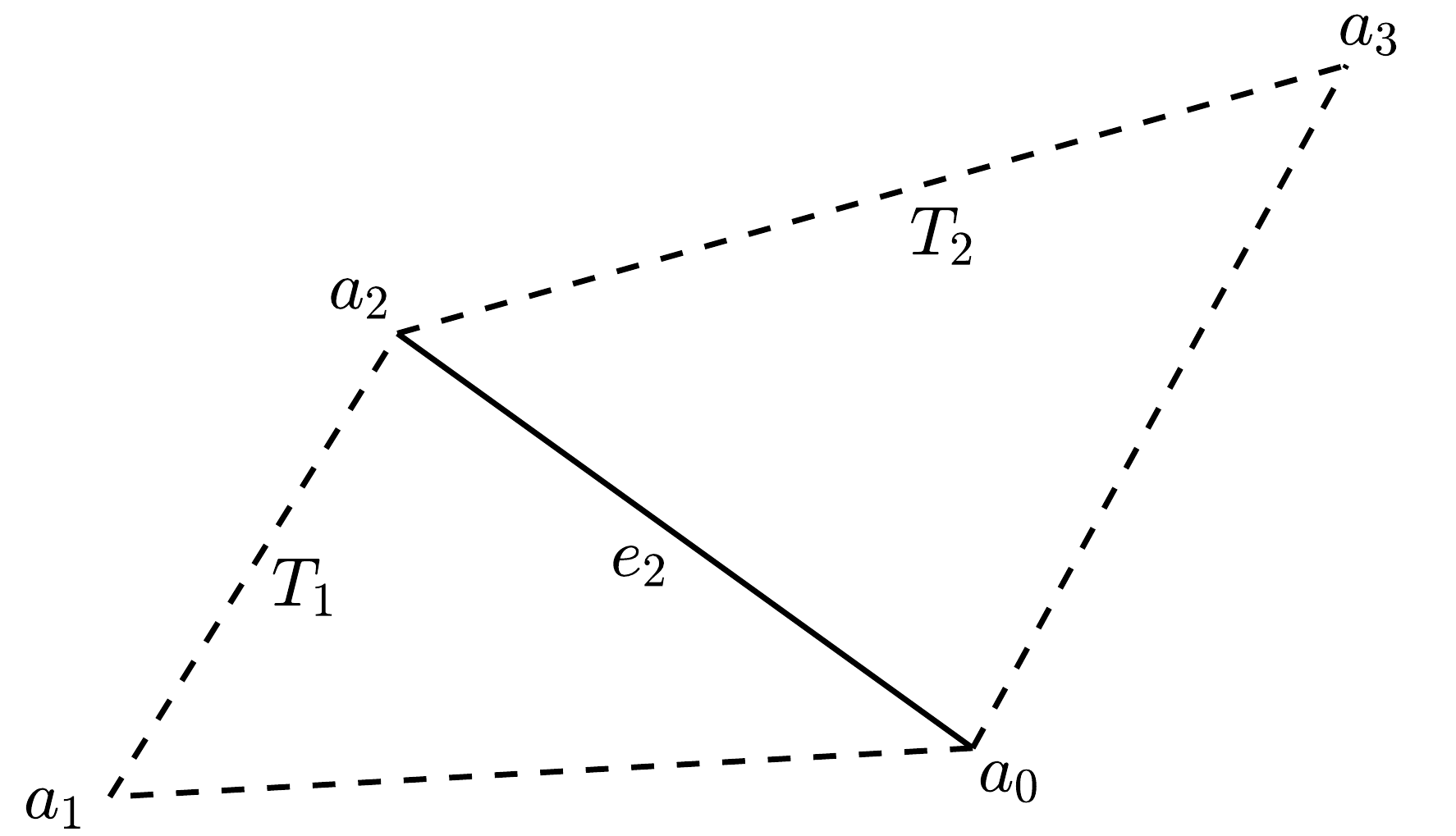}}
\end{minipage}%
\begin{minipage}[t]{0.5\linewidth}
\centerline{\includegraphics[width=1.95in]{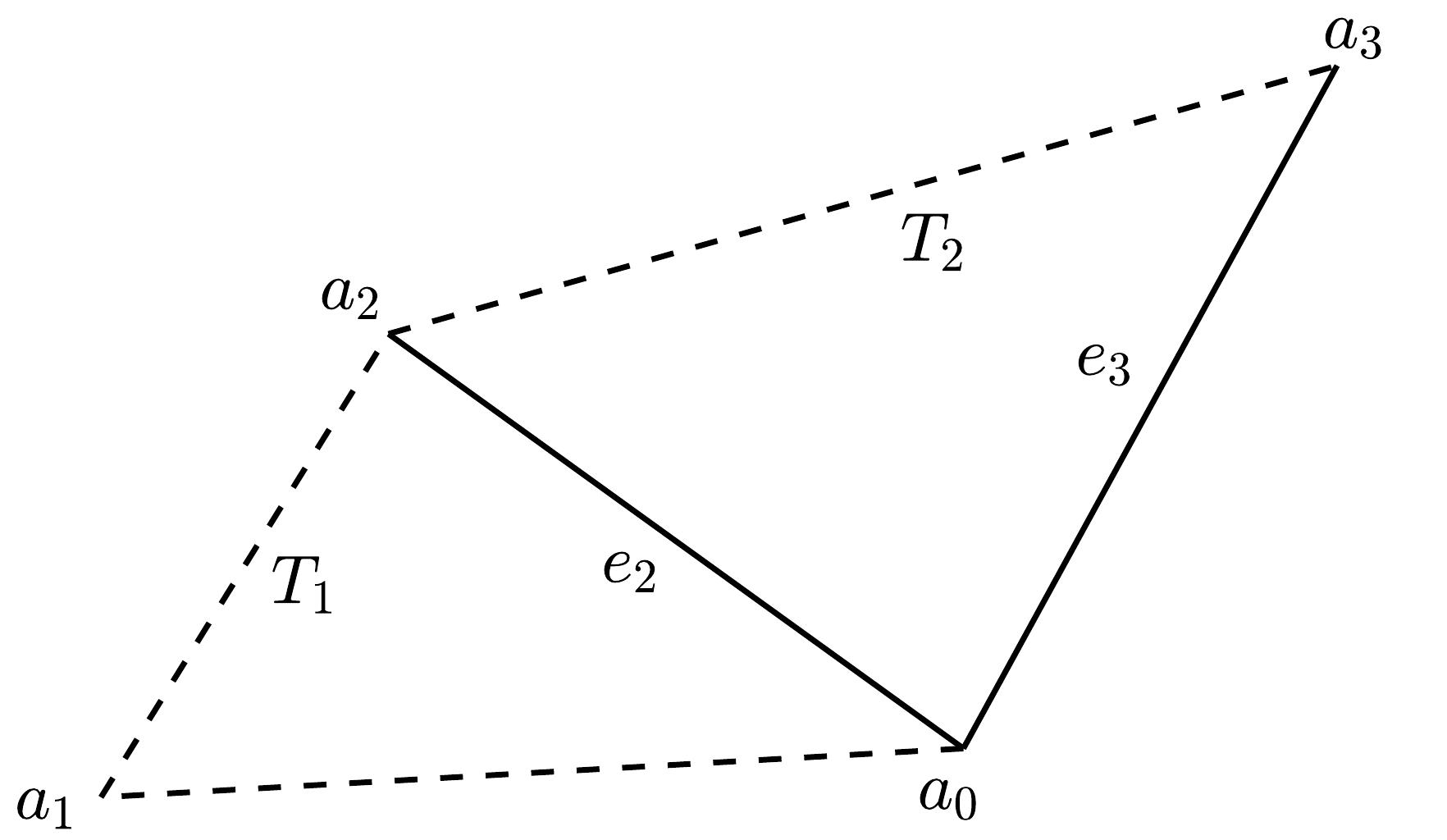}}
\end{minipage}%
\caption{Two adjacent cells; Degrees of freedom vanish on dotted edges.}\label{fig:two_triangles}
\end{figure}
\end{lemma}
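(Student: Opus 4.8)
The plan is to localize to each of the two cells and convert the statement into a small, explicit linear-algebra problem that is settled by the single-triangle basis functions and dimension counts already recorded in the previous subsection. Throughout I treat the members of $\uV{}_{h0}(T_1\cup T_2)$ and $\uV{}_{h,e_3}(T_1\cup T_2)$ as divergence-free on each cell, as is the running convention of this section.

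\textbf{Reduction to the one-cell spaces.} Any $\uv\in\uV{}_{h0}(T_1\cup T_2)$ is a divergence-free quadratic on each cell whose edge degrees of freedom vanish on $\partial(T_1\cup T_2)$; hence its restriction to $T_l$ has all nodal parameters equal to zero on the two edges of $T_l$ other than $e_2$. By definition this means $\uv|_{T_l}\in\uW{}_{T_l,e_2}$, which is one-dimensional with explicit generator $\uw{}_{T_l,e_2}$. Thus $\uv=\alpha_1\uw{}_{T_1,e_2}+\alpha_2\uw{}_{T_2,e_2}$ for scalars $\alpha_1,\alpha_2$, and membership in $\uV{}_{h0}(T_1\cup T_2)$ is equivalent to the four matching conditions across $e_2$: the three normal moments of $\uv\cdot\mathbf{n}_{e_2}$ together with the tangential average $\fint_{e_2}\uv\cdot\mathbf{t}_{e_2}$. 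Since $\uw{}_{T_l,e_2}$ is expressed through $\uphi{}_{\mathbf{n}_{T_l,e_2},1}$ and $\uphi{}_{\mathbf{t}_{T_l,e_2},0}$ alone, its zeroth and second normal moments on $e_2$ vanish identically on both sides; consequently the normal continuity collapses to matching the single first moment, and only two of the four conditions remain nontrivial.

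\textbf{First assertion.} The two surviving conditions form a $2\times2$ homogeneous system for $(\alpha_1,\alpha_2)$, whose coefficients I would read off from the explicit formula for $\uw{}_{T_l,e_2}$: the tangential average contributes equal-magnitude entries and the first normal moment contributes entries proportional to $S_l/l_2^2$. The claim $\dim\uV{}_{h0}(T_1\cup T_2)=0$ is then precisely the nonvanishing of the determinant, which I expect to be a nonzero multiple of $S_1+S_2$. The delicate point, and the main obstacle, is the orientation bookkeeping on the shared edge: because the outward normal and the induced tangent both reverse from $T_1$ to $T_2$, and the first-moment weight $\lambda_j-\lambda_k$ is read with opposite endpoint orderings by the two cells, the naive difference $S_1-S_2$ (which would wrongly leave a kernel on equal-area pairs) is turned into the sum $S_1+S_2$. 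Tracking these signs correctly is where the care lies; everything else is a direct substitution.

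\textbf{Second assertion.} For $\uV{}_{h,e_3}(T_1\cup T_2)$ only the boundary degrees of freedom off $e_3$ must vanish, so now $\uv|_{T_1}\in\uW{}_{T_1,e_2e_3}$ (free on $e_2$ and $e_3$), of dimension $5$, while $\uv|_{T_2}\in\uW{}_{T_2,e_2}$ remains one-dimensional; the ambient space of pairs has dimension $6$. Membership imposes the four $e_2$-matchings, i.e.\ the vanishing of the gluing map $J$ sending a pair to the four jumps of its $e_2$-degrees of freedom. Here I would sidestep any sign computation: the map taking $\uv|_{T_1}\in\uW{}_{T_1,e_2e_3}$ to its four $e_2$-degrees of freedom already has image all of $\mathbb{R}^4$, since its kernel is exactly $\uW{}_{T_1,e_3}$ of dimension $1$ and $5-1=4$. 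Hence $J$ is surjective, and rank--nullity gives $\dim\uV{}_{h,e_3}(T_1\cup T_2)=6-4=2$, as claimed. This second part is therefore essentially automatic once the one-cell dimensions are in hand, and only the first part requires the orientation argument above.
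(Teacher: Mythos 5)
Your proposal is correct, and it splits relative to the paper: for the first assertion you follow essentially the paper's own route, while for the second you take a genuinely different one. For $\dim(\uV{}_{h0}(T_1\cup T_2))=0$ the paper argues exactly as you do---it restricts to the one-dimensional one-cell kernels, writing $\upsi{}_h|_{T_1}=\alpha\,\uw{}_{T_1,e_2}$ and $\upsi{}_h|_{T_2}=\beta\,\uw{}_{T_2,e_2}$, and then merely asserts that the weak continuity conditions across $e_2$ force $\alpha=\beta=0$. Your reduction to a $2\times 2$ system (only the first normal moment and the tangential average survive, since $\uw{}_{T_l,e_2}$ is built from $\uphi{}_{\mathbf{n}_{T_l,e_2},1}$ and $\uphi{}_{\mathbf{t}_{T_l,e_2},0}$ alone) is a sharper version of the same step, and your diagnosis that the orientation reversal across the shared edge turns the determinant into a multiple of $S_1+S_2$ rather than $S_1-S_2$ is the correct mechanism; it is corroborated by the coefficient $\frac{S_1+S_2}{d_2}$ in the paper's own basis function $\upsi{}_{h}^{1}$ and in \eqref{eq:Four-Cell ker represent}. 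You leave that determinant as ``expected'' rather than computed, but the paper's ``it is easy to verify'' is no more explicit, so this is not a gap relative to the paper. For $\dim(\uV{}_{h,e_3}(T_1\cup T_2))=2$ the paper says only ``similarly'' and exhibits the two basis functions $\upsi{}_{h}^{1}$ and $\upsi{}_{h}^{2}$, whereas your rank--nullity argument---product space $\uW{}_{T_1,e_2e_3}\times\uW{}_{T_2,e_2}$ of dimension $5+1=6$, gluing map onto the four $e_2$-jumps surjective because the $e_2$-trace map on the five-dimensional factor has kernel exactly $\uW{}_{T_1,e_3}$ of dimension one---is cleaner, needs no sign bookkeeping, and, unlike the bare exhibition of two functions, delivers the upper bound $\dim\leqslant 2$ without further computation. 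What the paper's construction buys instead is the explicit formula for $\upsi{}_{h}^{1}$, whose coefficient $\frac{S_1+S_2}{d_2}$ is reused verbatim in the proof of Lemma~\ref{lem:Four-Cell func} and in the atom functions of Definition~\ref{def:atom function}; with your existence argument that formula would still have to be computed downstream.

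Two harmless remarks. You place the free edge $e_3$ on $T_1$, whereas the paper's figure and the later application put it on $T_2$; the count is symmetric under relabeling, so nothing breaks. And your explicit adoption of the divergence-free convention is indeed necessary: read literally, without $\dv\,\uv{}_h=0$ the first space has dimension $4$ (twenty-four local degrees of freedom, four continuity constraints, sixteen vanishing boundary degrees of freedom), so flagging that convention at the outset was the right call.
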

\begin{proof}
Given $\upsi{}_h\in \uV{}_{h0}(T_1\cup T_2)$, $\upsi{}_h|_{T_1}=\alpha \uw{}_{T_1,e_2}$ and $\upsi{}_h|_{T_2}=\beta \uw{}_{T_2,e_2}$. It is easy to verify that the weak continuity conditions imposed on $e_2$ makes $\alpha=\beta=0$, namely $\upsi{}_h=\undertilde{0}$. 

Similarly, we can prove $\dim(\uV{}_{h,e_3}(T_1\cup T_2))=2$. Specifically, two basis functions are
$$
\upsi{}_{h}^{1}\ \mbox{ such\ that }\ \upsi{}_{h}^{1}|_{T_{1}}=\uw{}_{T_{1},e_{2}}\ \mbox{ and }\ \upsi{}_{h}^{1}|_{T_2}=\uw{}_{T_{2},e_{2}}+\frac{S_{1}+S_{2}}{d_{2}}
\uw{}_{T_{2},e_{3},e_{2}}
$$
and 
$$
\upsi{}_h^2\ \mbox{ such that }\ \upsi{}_h^2|_{T_1}=\undertilde{0}\ \mbox{and} \ \upsi{}_h^2|_{T_2}=\uw{}_{T_2,e_3}.
$$
The proof is completed. 
\end{proof}

To admit a nontrivial basis function, at least four cells are needed. 
\begin{lemma}\label{lem:three cells dim 0}
Let $\omega$ be a subdomain of three continuous cells, where the first and last cells are not connected; see Figure~\ref{fig:three_triangles}.  If $\uv{}_{h}\in \uV{}_{h}\big(\omega)$, $\dv(\uv{}_{h}) = 0$, and degrees of freedom of $\uv{}_{h}$ vanish on $\partial \omega$, then $\uv{}_{h}=\undertilde{0}$.

\begin{figure}[H]
\centerline{\includegraphics[width=2in]{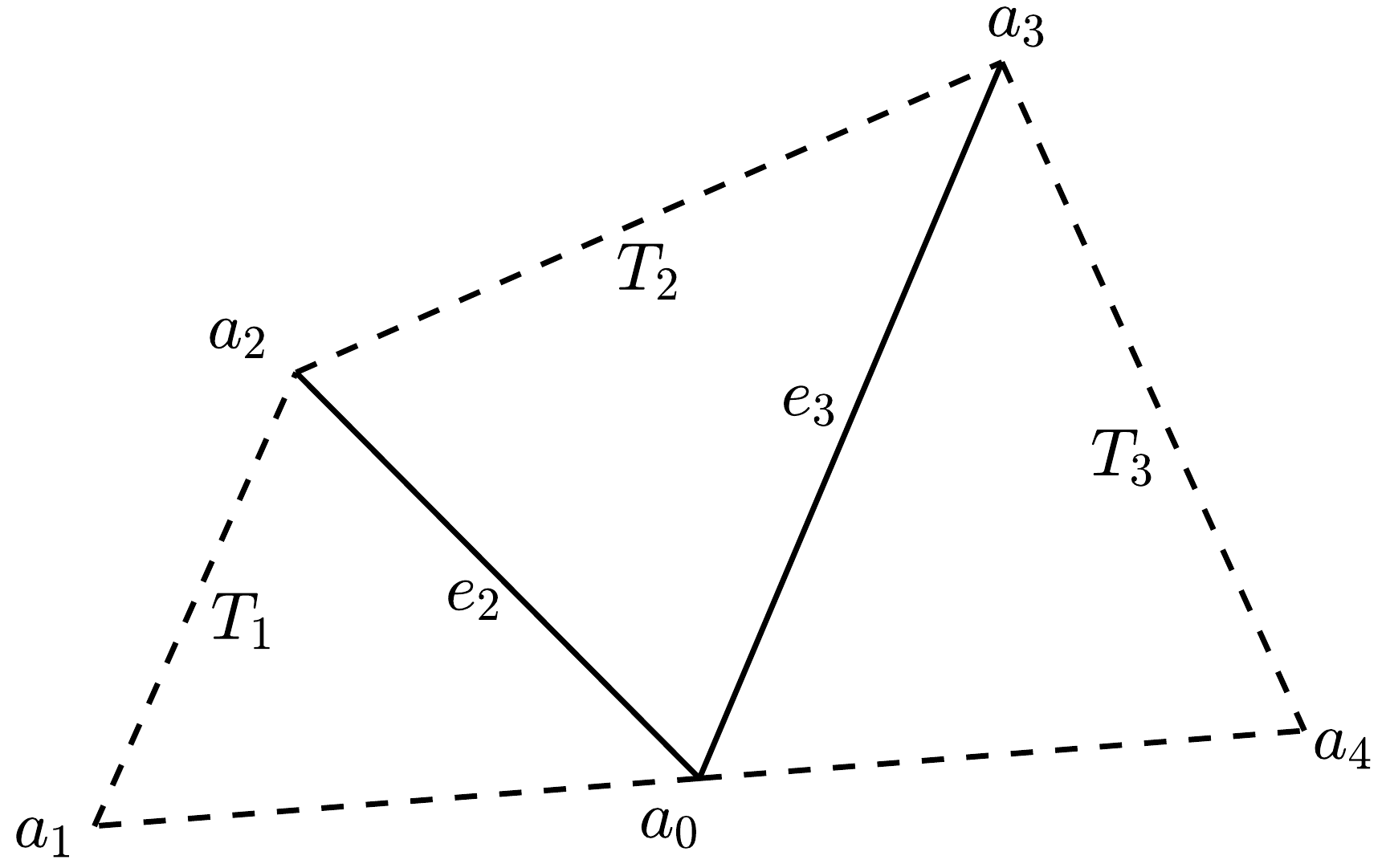}}
\caption{No trivial divergence-free function on three continuous cells.}\label{fig:three_triangles}
\end{figure}
\end{lemma}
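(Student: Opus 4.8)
The plan is to peel off the three cells using the explicit single-cell kernel bases from the previous subsection together with the interface matching already exploited in Lemma~\ref{lem:two cell 2 dim}. Label the cells $T_1,T_2,T_3$ so that $\overline{T_1}\cap\overline{T_2}=e_{12}$ and $\overline{T_2}\cap\overline{T_3}=e_{23}$, while $\overline{T_1}\cap\overline{T_3}$ contains no edge. The two end cells $T_1$ and $T_3$ each carry two edges on $\partial\omega$, on which every degree of freedom vanishes; by the one-dimensionality of the local single-edge kernel space this forces $\uv{}_h|_{T_1}=\alpha\,\uw{}_{T_1,e_{12}}$ and $\uv{}_h|_{T_3}=\gamma\,\uw{}_{T_3,e_{23}}$. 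The middle cell $T_2$ carries only one edge $e_b$ on $\partial\omega$, so $\uv{}_h|_{T_2}\in\uW{}_{T_2,e_{12}e_{23}}$, which is five-dimensional; in the explicit basis I would write $\uv{}_h|_{T_2}=c_1\uw{}_{T_2,e_{12}}+c_2\uw{}_{T_2,e_{12},e_{23}}+c_3\uw{}_{T_2,e_{23},e_{12}}+c_4\uw{}_{T_2,e_{23}}+c_5\uw{}_{T_2,a_b}$.

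Next I would use the \emph{full} continuity of the normal component across the two interior edges to reduce $T_2$. Since the zeroth and second normal moments of $\uw{}_{T_1,e_{12}}$ and of $\uw{}_{T_3,e_{23}}$ vanish, continuity of $\uv{}_h\cdot\mathbf{n}_e$ forces the zeroth and second normal moments of $\uv{}_h|_{T_2}$ to vanish on both $e_{12}$ and $e_{23}$ (here zero matches zero, so the orientation of the moment test functions is irrelevant). Inspecting the explicit basis, the zeroth normal moment on $e_{12}$ (equivalently on $e_{23}$) is produced only by $\uw{}_{T_2,a_b}$, and the second normal moment on $e_{12}$ (resp.\ $e_{23}$) only by $\uw{}_{T_2,e_{12},e_{23}}$ (resp.\ $\uw{}_{T_2,e_{23},e_{12}}$). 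Hence $c_5=c_2=c_3=0$ and $\uv{}_h|_{T_2}=c_1\uw{}_{T_2,e_{12}}+c_4\uw{}_{T_2,e_{23}}$.

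The decisive structural fact is that $\uw{}_{T_2,e_{12}}$ and $\uw{}_{T_2,e_{23}}$ have their degrees of freedom supported on disjoint edges, so the interface conditions decouple: on $e_{12}$ only the pair $(\alpha,c_1)$ survives, and on $e_{23}$ only $(\gamma,c_4)$. On each interface the two remaining conditions — continuity of the first normal moment and of the tangential average — are exactly the two-cell matching treated in Lemma~\ref{lem:two cell 2 dim}. Reproducing that computation yields $\alpha=c_1=0$ and $\gamma=c_4=0$, and therefore $\uv{}_h=\undertilde{0}$, which is the claim.

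The main obstacle is establishing that these $2\times 2$ interface systems are nonsingular on \emph{every} admissible mesh, and not merely generically. Here the orientation bookkeeping is essential: across a shared edge the local normal and tangential frames both flip, and after accounting for this the determinant of the first-moment/tangential-average system turns out to be proportional to $S_i+S_j$, the \emph{sum} of the two neighbouring cell areas, rather than a difference, so it never degenerates. This is the same additive mechanism that produces the coefficient $\frac{S_1+S_2}{d_2}$ in the basis of $\uV{}_{h,e_3}(T_1\cup T_2)$ in Lemma~\ref{lem:two cell 2 dim}; once this nondegeneracy is verified, the three-cell chain collapses term by term and the proof is complete.
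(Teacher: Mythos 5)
Your proof is correct, and it uses precisely the toolkit the paper intends for this lemma (which the paper in fact states without proof): the one-dimensional end-cell spaces $\uW{}_{T_1,e_{12}}$, $\uW{}_{T_3,e_{23}}$, the explicit five-function basis of $\uW{}_{T_2,e_{12}e_{23}}$ on the middle cell, and the interface matching of Lemma~\ref{lem:two cell 2 dim}. One local statement needs repair: it is not true that the second normal moment on $e_{12}$ is produced \emph{only} by $\uw{}_{T_2,e_{12},e_{23}}$ --- the vertex function $\uw{}_{T_2,a_b}$ also carries a $\uphi{}_{\mathbf{n}_{T,e_j},2}$ component on one of the two interior edges, as its explicit formula shows. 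Your conclusion survives because the zeroth normal moments are carried \emph{only} by $\uw{}_{T_2,a_b}$, so the eliminations must be performed sequentially ($c_5=0$ from the zeroth moments first, and only then $c_2=c_3=0$ from the second moments), not in parallel as written. Your remaining nondegeneracy claim for the decoupled $2\times 2$ interface systems --- determinant proportional to $S_i+S_j$ after accounting for the simultaneous flip of $\mathbf{n}_e$ and $\mathbf{t}_e$ across the shared edge --- is asserted at the same level of detail as the paper's own ``easy to verify'' in Lemma~\ref{lem:two cell 2 dim}, and it is corroborated there by the coefficient $\frac{S_{1}+S_{2}}{d_{2}}$ appearing in the basis function $\upsi{}_{h}^{1}$; spelling out that $2\times 2$ computation once would make the whole argument self-contained.
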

\begin{lemma}\label{lem:Four-Cell func}
Let $\omega$ be a subdomain composed of four continuous cells, and its first and last cells are not connected; see Figure~\ref{fig:four cell in M}.
A local space on $\omega$ is define  as
\begin{align*}
\uV{}_{h0,\omega} := \Big\{\uv{}_{h} \in \uL{}^{2}(\omega):\  \uv{}_{h}{}|_{T} \in & (P_{2}(T))^{2}, \  \forall T \in \omega, \ \uv{}_{h}\cdot \mathbf{n}_{e}  \mbox{ and } \fint_{e} \uv{}_{h}\cdot \mathbf{t}_{e} \ud s \mbox{ are continuous} \\
& \mbox{ across interior edges and vanish on edges lying on } \partial \omega \Big\},
\end{align*}
then ${\rm dim}({\rm ker}(\dv, \uV{}_{h0,\omega})) = 1$.

\begin{figure}[H]
\centerline{\includegraphics[width=2.1in]{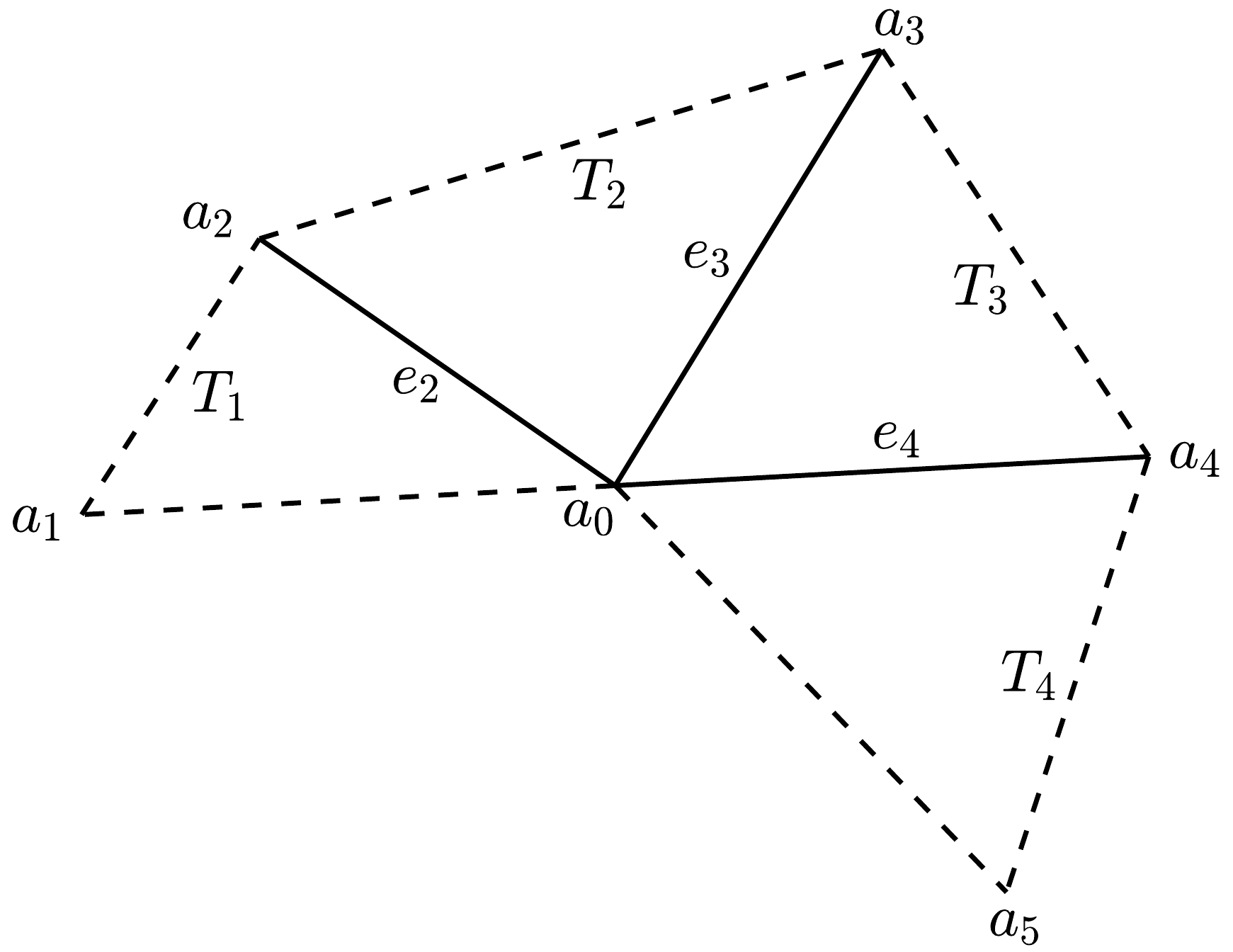}}
\caption{Patch $\omega$ composed of four continuous cells sharing an only vertex $a_{1}$.}\label{fig:four cell in M}
\end{figure}
\end{lemma}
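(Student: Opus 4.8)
The plan is to prove the two bounds $\dim\big({\rm ker}(\dv,\uV{}_{h0,\omega})\big)\ge 1$ and $\dim\big({\rm ker}(\dv,\uV{}_{h0,\omega})\big)\le 1$ separately, using the single-cell spaces $\uW{}_{T,\cdot}$ introduced above together with Lemma~\ref{lem:three cells dim 0}. Throughout, label the four cells $T_1,T_2,T_3,T_4$ consecutively around the shared vertex $a_{1}$, so that each of the three interior edges is shared by two consecutive cells, while every other edge of $\omega$ lies on $\partial\omega$.

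For the lower bound I would count dimensions. Because the local element is unisolvent and all degrees of freedom attached to boundary edges are required to vanish, the free parameters of $\uV{}_{h0,\omega}$ are precisely the four degrees of freedom carried by each of the three interior edges, hence $\dim\uV{}_{h0,\omega}=12$. On the other hand, for any $\uv{}_{h}\in\uV{}_{h0,\omega}$ the three normal moments on each boundary edge vanish, so the $P_2$ normal trace $\uv{}_{h}\cdot\mathbf{n}$ vanishes on all of $\partial\omega$; integration by parts then gives $\int_{\omega}\dv\,\uv{}_{h}\ud\omega=\int_{\partial\omega}\uv{}_{h}\cdot\mathbf{n}\ud s=0$. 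Thus ${\rm Im}(\dv,\uV{}_{h0,\omega})$ lies in the mean-zero subspace of the piecewise $P_1$ space, which has dimension $4\times 3-1=11$. By rank--nullity, $\dim\big({\rm ker}(\dv,\uV{}_{h0,\omega})\big)=12-\dim\big({\rm Im}(\dv,\uV{}_{h0,\omega})\big)\ge 1$.

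For the upper bound I would construct an injective linear functional on the kernel. Let $\uv{}_{h}\in{\rm ker}(\dv,\uV{}_{h0,\omega})$. Two of the three edges of $T_1$ lie on $\partial\omega$, so all degrees of freedom of $\uv{}_{h}|_{T_1}$ on those two edges vanish and $\uv{}_{h}|_{T_1}$ is divergence free; hence $\uv{}_{h}|_{T_1}\in\uW{}_{T_1,e}$, where $e$ denotes the unique interior edge of $T_1$. Since $\dim\uW{}_{T_1,e}=1$, we may write $\uv{}_{h}|_{T_1}=\alpha\,\uw{}_{T_1,e}$ with a scalar $\alpha$, and it suffices to show that $\uv{}_{h}\mapsto\alpha$ is injective. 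Suppose $\alpha=0$, so that $\uv{}_{h}|_{T_1}=\undertilde{0}$. Then $\uv{}_{h}\cdot\mathbf{n}$ and $\fint_{e}\uv{}_{h}\cdot\mathbf{t}\ud s$ vanish when evaluated from $T_1$, and the weak continuity built into $\uV{}_{h0,\omega}$ transfers this homogeneous data to the $T_2$ side. Consequently, on the three-cell subdomain $\omega'=T_2\cup T_3\cup T_4$ the function $\uv{}_{h}$ is divergence free and all of its degrees of freedom vanish on $\partial\omega'$ (comprising $e$, the three outer edges, and the remaining boundary spoke). As $\omega'$ is exactly a patch of three continuous cells whose first and last cells are not connected, Lemma~\ref{lem:three cells dim 0} yields $\uv{}_{h}|_{\omega'}=\undertilde{0}$, whence $\uv{}_{h}=\undertilde{0}$. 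This proves injectivity and therefore $\dim\big({\rm ker}(\dv,\uV{}_{h0,\omega})\big)\le 1$; combined with the lower bound, equality holds.

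The step I expect to be the main obstacle is the reduction in the upper bound: one must confirm that the configuration $\omega'=T_2\cup T_3\cup T_4$ matches the hypotheses of Lemma~\ref{lem:three cells dim 0} (three consecutive cells with the first and last disconnected) and, crucially, that the vanishing of $\uv{}_{h}$ on $T_1$ produces genuinely homogeneous boundary data on $\partial\omega'$ through only the weak (average) tangential continuity rather than pointwise continuity. The remaining ingredients---the value $\dim\uV{}_{h0,\omega}=12$, the identity $\dim\uW{}_{T_1,e}=1$, and the mean-zero constraint on the image---are direct consequences of the unisolvence and the single-cell dimension formulas already established, and require only careful bookkeeping of degrees of freedom.
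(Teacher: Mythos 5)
Your proof is correct, but it takes a genuinely different route from the paper's. The paper argues constructively: it first notes that on the two end cells $\uv{}_h|_{T_1}=r\,\uw{}_{T_1,e_2}$ and $\uv{}_h|_{T_4}=s\,\uw{}_{T_4,e_4}$ by the one-dimensionality of $\uW{}_{T,e}$, then uses Lemma~\ref{lem:two cell 2 dim} to propagate these representations into $T_2$ and $T_3$, and finally matches the data across the middle edge $e_3$, which forces the free coefficients $b_4=c_1=0$ and fixes $s$ as a multiple of $r$; this yields the kernel as the explicit one-parameter family~\eqref{eq:Four-Cell ker represent}, giving both bounds at once. You instead split the claim: a rank--nullity count ($\dim\uV{}_{h0,\omega}=12$ from the three interior edges, and the divergence image confined to the $11$-dimensional mean-zero subspace of discontinuous $P_1$ because the full $P_2$ normal trace vanishes on $\partial\omega$) gives $\dim\ge 1$, while injectivity of $\uv{}_h\mapsto\alpha$ via reduction of the case $\alpha=0$ to the three-cell vanishing statement gives $\dim\le 1$. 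Both halves are sound, and your counting device is in fact the same one the paper deploys one level up in Lemma~\ref{lem:conds C1}. Two caveats are worth recording. First, your upper bound leans on Lemma~\ref{lem:three cells dim 0}, which the paper states without proof, and in your application the three cells $T_2,T_3,T_4$ all share the central vertex $a_1$ — this is the degenerate fan configuration addressed in Remark~\ref{rem:three cells no dimension} rather than the chain drawn in Figure~\ref{fig:three_triangles}; the stated hypotheses (first and last cells not edge-adjacent) are still met, but the remark is the more precise citation. Second, what the paper's longer route buys is the explicit basis function~\eqref{eq:Four-Cell ker represent}, which is not a by-product one can discard: the atom functions of Definition~\ref{def:atom function} and the sweeping argument of Lemma~\ref{lem:dim m} are built directly from that formula, so your proof settles the dimension but would leave that representation still to be derived for the subsequent arguments.
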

\begin{proof}
We will complete the proof in four steps.

\noindent{\bf Step 1.} Consider $T_{1}$ and $T_{4}$. We have
\begin{align}\label{eq:rep on T1T4}
\uv{}_{h}|_{T_{1}} = r\, \uw{}_{T_{1},e_{2}}, \ \mbox{ and } \uv{}_{h}|_{T_{4}} = s\, \uw{}_{T_{4},e_{4}}.
\end{align}
with constants $r$ and $s$.

\noindent{\bf Step 2.} Consider two adjacent cells $T_{1}\cup T_{2}$. From Lemma~\ref{lem:two cell 2 dim}, it holds that
\begin{align}\label{eq:two cell 2 dim}
 \uv{}_{h}|_{T_{2}} = r\,\uw{}_{T_{2},e_{2}} + \Big(\frac{S_{1}+S_{2}}{d_{2}}r\Big)\,\uw{}_{T_{2},e_{3},e_{2}} + b_{4}\,\uw{}_{T_{2},e_{3}}.
\end{align}
with $b_{4}$ to be determined.

\noindent{\bf Step 3.} Consider two adjacent cells $T_{3}\cup T_{4}$.  Similar to Step 2, we have 
\begin{align}\label{eq:rep on T3}
\uv{}_{h}|_{T_{3}} = c_{1}\uw{}_{T_{3},e_{3}}   -\Big(\frac{S_{4}+S_{3}}{d_{4}}s\Big)\, \uw{}_{T_{3},e_{3},e_{4}} + s\,\uw{}_{T_{3},e_{4}},
\end{align}
with $c_{1}$ to be determined.

\noindent{\bf Step 4.} Consider two adjacent cells $T_{2}\cup T_{2}$. Utilizing the continuity of $\uv{}_{h}\cdot \mathbf{n}$ and $\fint_{e_{3}}\uv{}_{h}\cdot \mathbf{t}$ on $e_{3}$, and the representation of $\uv{}_{h}$ in~\eqref{eq:rep on T1T4} -- \eqref{eq:rep on T3}, we derive 
\begin{align}\label{eq:T3 and T2}
b_{4} = c_{1} = 0,\ s = -\frac{(S_{1}+S_{2})d_{4}}{(S_{4}+S_{3})d_{2}}\,r.
\end{align}
Therefore, $\uv{}_{h}$ is determined once the constant $r$ is given and ${\rm dim}({\rm ker}(\dv, \uV{}_{h0,\omega})) = 1$. 
\end{proof}

 If we take $r=\frac{-d_{2}}{S_{1}+S_{2}}$, then
\begin{equation}
\label{eq:Four-Cell ker represent}
\begin{split}
&\uv{}_{h}|_{T_{1}} =\frac{- d_{2}}{S_{1}+S_{2}}\uw{}_{T_{1},e_{2}},
\qquad\qquad \qquad \ \uv{}_{h}|_{T_{2}}  = \frac{-d_{2}}{S_{1}+S_{2}}\uw{}_{T_{2},e_{2}} - \uw{}_{T_{2},e_{3},e_{2}}, \\
& \uv{}_{h}|_{T_{3}}  = -\uw{}_{T_{3},e_{3},e_{4}} + \frac{d_{4}}{S_{3}+S_{4}}\uw{}_{T_{3},e_{4}},
\qquad \uv{}_{h}|_{T_{4}} = \frac{d_{4}}{S_{3}+S_{4}}\uw{}_{T_{4},e_{4}},
\end{split}
\end{equation}
\begin{remark}\label{rem:three cells no dimension}
{\rm
The pattern in Figure~\ref{fig:three_triangles} can degenerate to a patch which admits only trivial function, i.e., zero vector function; see Figure~\ref{fig:variant Three-Cell}. 
}
\begin{figure}[h!!]
\begin{minipage}[t]{0.5\linewidth}
\centerline{\includegraphics[width=1.8in]{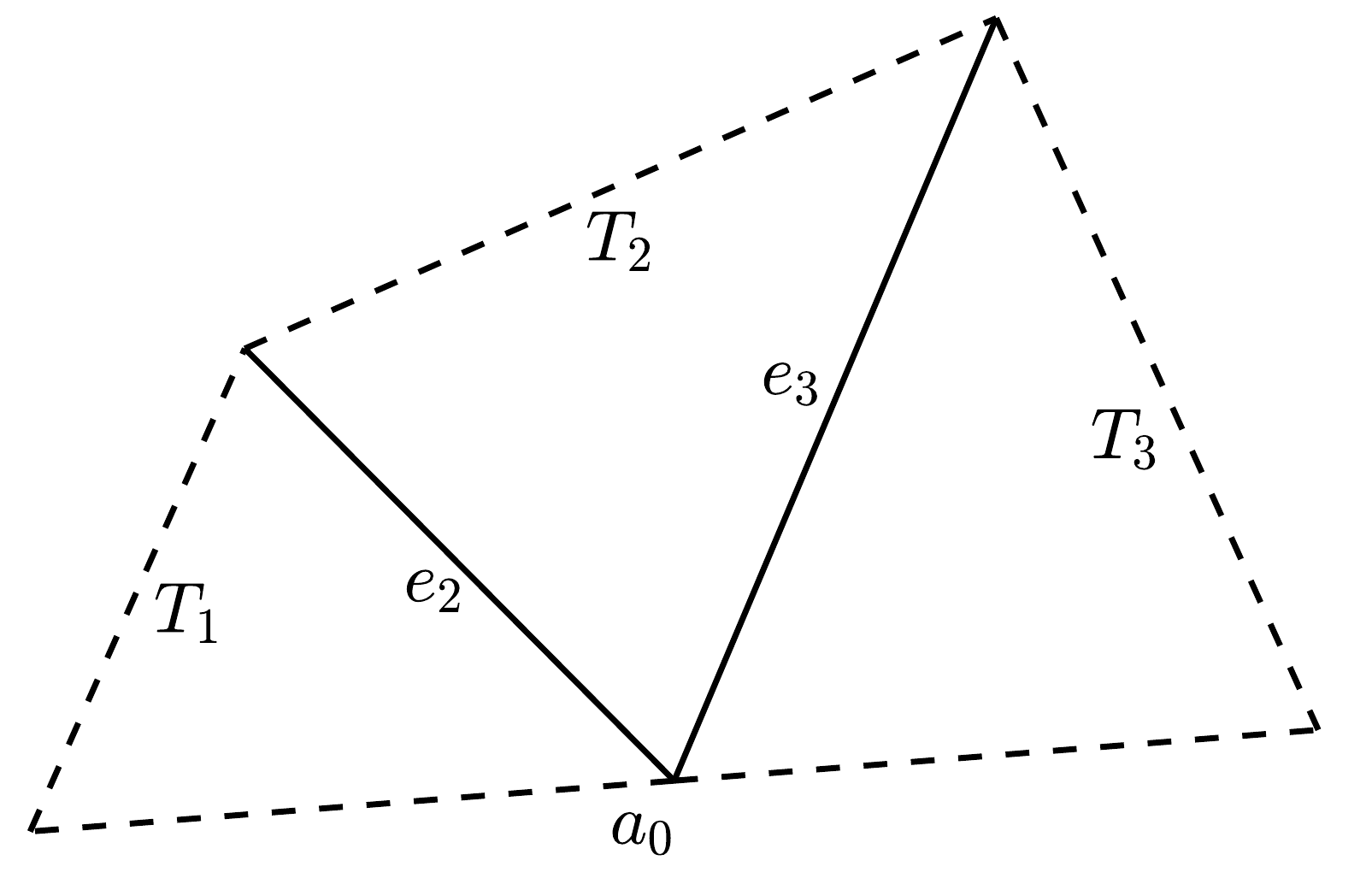}}
\end{minipage}%
\begin{minipage}[t]{0.5\linewidth}
\centerline{\includegraphics[width=1.8in]{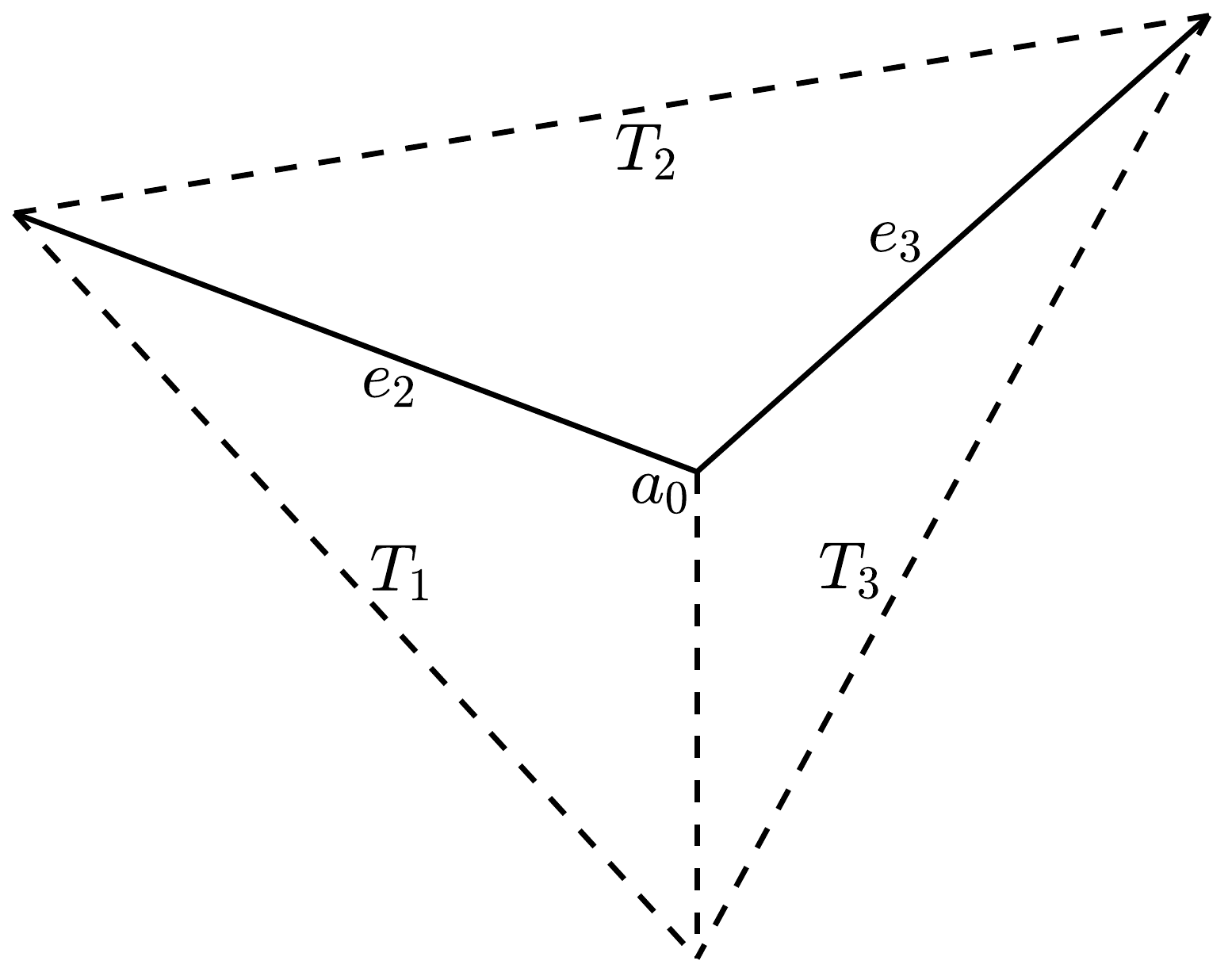}}
\end{minipage}%
\caption{Degenerate case: three cells form a patch.}\label{fig:variant Three-Cell}
\end{figure}
\end{remark}

\begin{remark}\label{rem:var atoms func}
{\rm
Let $T_1$, $T_2$ and $T_3$ be three adjacent cells, such that $\overline{T}_1\cap \overline{T}_2=e_{2}$, $\overline{T}_2\cap \overline{T}_3=e_{3}$, and $\overline{T}_3\cap \overline{T}_1=e_{1}$. We may treat this triple of cells as a degenerate case of a four cell sequence $T_1-T_2-T_3-T_1$, and the degenerate patch may inherit the divergence-free basis function on the patch  $T_1-T_2-T_3-T_1$.

\begin{figure}[htbp]
\begin{minipage}[t]{0.5\linewidth}
\centerline{\includegraphics[width=1.8in]{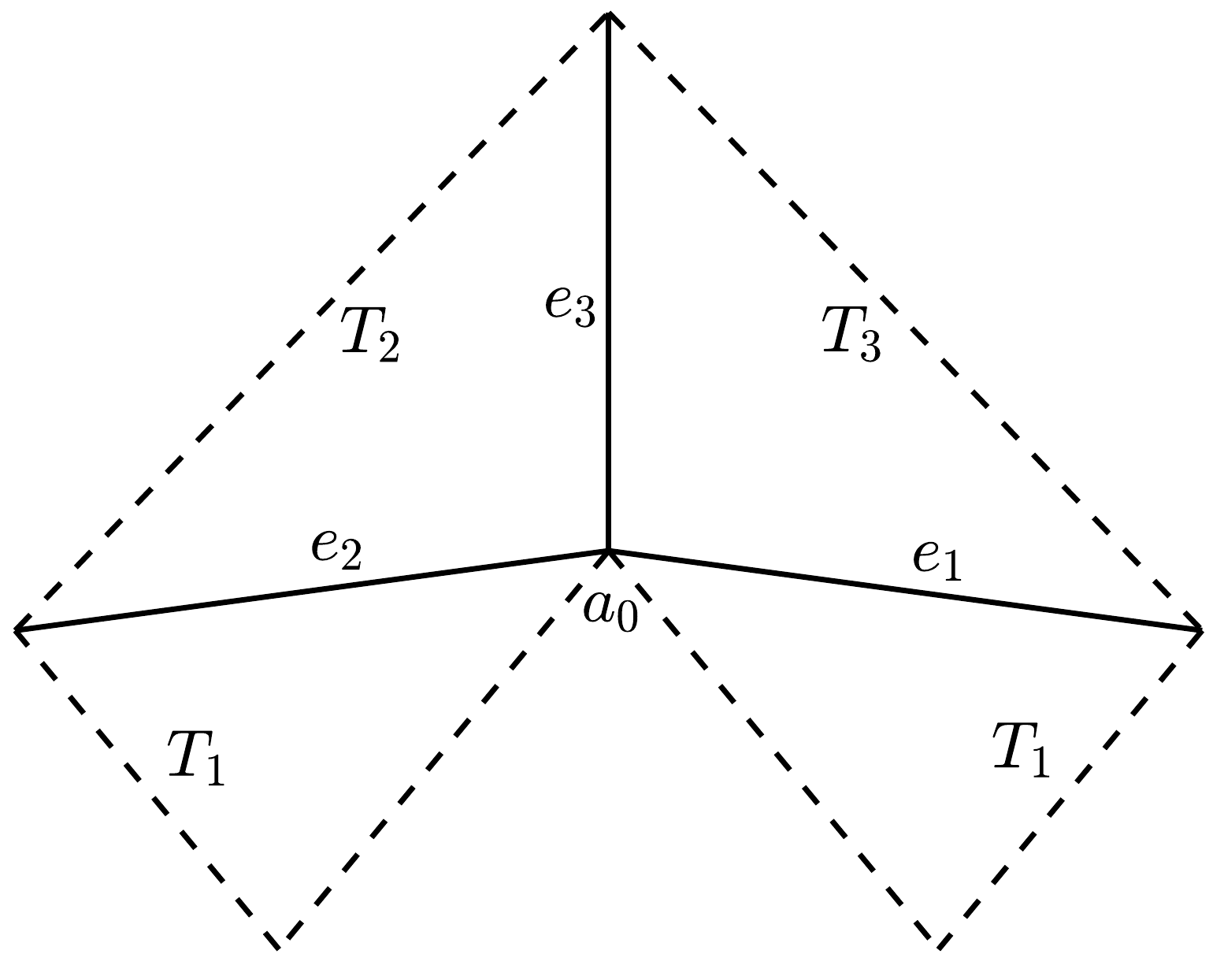}}
\end{minipage}%
\begin{minipage}[t]{0.5\linewidth}
\centerline{\includegraphics[width=1.8in]{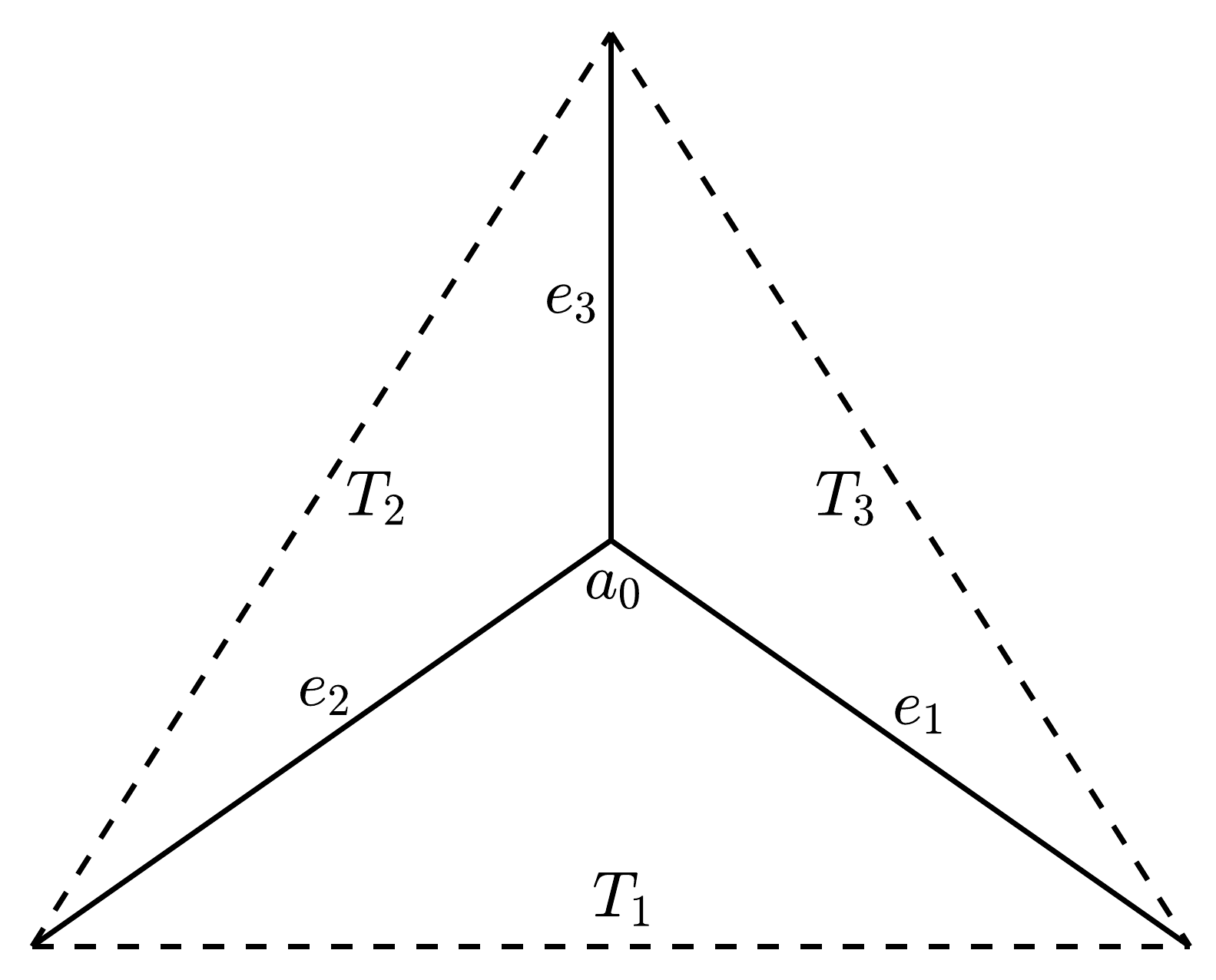}}
\end{minipage}%
\caption{The first and the last cells of the left pattern overlap to form the right pattern.}\label{fig:variant Four-Cell}
\end{figure}

Take $T_{1}$ to be the overlapping cell. The function (corresponding to~\eqref{eq:Four-Cell ker represent}), denoted by $\uv{}_{1}$, satisfies
\begin{equation}\label{eq:var Four-Cell ker represent}
\begin{split}
& \uv{}_{1}|_{T_{1}}  = \frac{-d_{2}}{S_{1}+S_{2}}\uw{}_{T_{1},e_{2}}  + \frac{d_{1}}{S_{3}+S_{1}}\uw{}_{T_{1},e_{1}}
 \qquad \qquad 
\uv{}_{1}|_{T_{2}} =  \frac{-d_{2}}{S_{1}+S_{2}}\uw{}_{T_{2},e_{2}}  - \uw{}_{T_{2},e_{3},e_{2}},\\ 
&\uv{}_{1}|_{T_{3}} = -\uw{}_{T_{3},e_{3},e_{1}} + \frac{d_{1}}{S_{3}+S_{1}}\uw{}_{T_{3},e_{1}}.
\end{split}
\end{equation}
The key ingredient is that $\uv{}_1|_{T_1}$ consists of $\uw{}_{T_1,e_2}$ and $\uw{}_{T_{1},e_1}$; note that this is just the dominant ingredients of the function (corresponding to $T_1-T_2-T_3-T_1$) on the two end cells. 
}
\end{remark}

\subsection{Structure of the kernel space on an {\bf${\mathbf m}$-macroelement}}
 Let $M =\cup_{s = 1:m}\ T_{s}$ be an {\bf${\mathbf m}$-macroelement}, and $\overline{T}_{s}\cap \overline{T}_{s+1} = e_{s+1}$. Particularly, $T_{m+1}=T_1$ and $e_{m+1}=e_1$. In the following context, the subscript~$s$ actually refers to $s-m$ if it is calculated to be great than $m$. 
 
\begin{figure}[htbp]
\centerline{\includegraphics[width=2.35in]{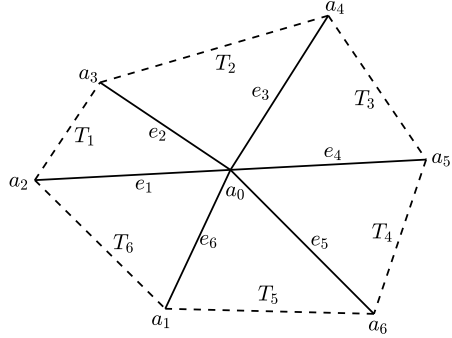}}
\caption{A macroelement composed of six cells with one interior vertex.}\label{fig:macroelement}
\end{figure}

\begin{definition}\label{def:atom function}
{\rm 
Let a subdomain $\omega_{l}$ be composed of continuous cells $T_{l}\cup T_{l+1} \cup T_{l+2}\cup T_{l+3}$, and $e_{l+1},\ e_{l+2}$, and $e_{l+3}$ be its three interior edges.  If 
\begin{itemize}
\item[(i)] for $m \geqslant 4$, $\uv{}_{l}|_{\omega_{l}} \in {\rm ker}(\dv, \uV{}_{h0,\omega_{l}})$ satisfies condition~\eqref{eq:Four-Cell ker represent}, and $\uv{}_{l}$ vanishes on ${M \backslash \omega_{l}}$,
 \item[(ii)] for $m = 3$, $\uv{}_{l}|_{\omega_{l}} \in {\rm ker}(\dv, \uV{}_{h0,\omega_{l}})$ satisfies condition~\eqref{eq:var Four-Cell ker represent},\end{itemize}
then $\uv{}_{l}$ is called an atom function on $M$.
}
\end{definition}
Therefore, there exist $m$ atom functions on an {\bf${\mathbf m}$-macroelement}.
Recall  
\begin{equation}
{\rm ker}(\dv, \uV{}_{h0,M}) =\Big\{\, \uv{}_h\in \uV{}_{h0}(M):\dv\,\uv{}_h=0\Big\}
\end{equation}
and denote 
\begin{equation}
\uZ{}_{h0}^\mathbf{n}(M):=\Big\{\, \uv{}_h\in{\rm ker}(\dv, \uV{}_{h0,M}):\int_{e_i}\uv{}_h\cdot\mathbf{n}_{e_i}=0,\ 1\leqslant i\leqslant m\Big\}.
\end{equation}

For an atom function $\uv{}_{i}$ on the {\bf${\mathbf m}$-macroelement} $M$, ${\rm supp}(\uv{}_{i}):= T_{i}\cup T_{i+1}\cup T_{i+2}\cup T_{i+3}$ ($m\geqslant 4$) or ${\rm supp}(\uv{}_{i} ):= T_{i}\cup T_{i+1}\cup T_{i+2}$ ($m = 3$). $T_{i+s}$ is called the $(s+1)$-th cell of ${\rm supp}(\uv{}_{i})$.
\begin{lemma}\label{lem:dim m}
It holds that
$\uZ{}_{h0}^\mathbf{n}(M)={\rm span}\{\uv{}_l\}_{1\leqslant l\leqslant m}$.
\end{lemma}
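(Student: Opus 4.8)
The plan is to establish the two inclusions $\mathrm{span}\{\uv_l\}_{1\leqslant l\leqslant m}\subseteq \uZ{}_{h0}^{\mathbf{n}}(M)$ and $\uZ{}_{h0}^{\mathbf{n}}(M)\subseteq\mathrm{span}\{\uv_l\}_{1\leqslant l\leqslant m}$ separately, and to obtain the linear independence of the atoms as a by-product of the second inclusion. The single structural fact I would exploit throughout is that, by the explicit formulas \eqref{eq:basis on T}, every building block $\uw{}_{T,e}$ and $\uw{}_{T,e,e'}$ occurring in \eqref{eq:Four-Cell ker represent} and \eqref{eq:var Four-Cell ker represent} is a linear combination of $\uphi{}_{\mathbf{n}_{T,e},1}$, $\uphi{}_{\mathbf{n}_{T,e},2}$ and $\uphi{}_{\mathbf{t}_{T,e},0}$ only; in particular none of them carries a nonzero zeroth normal moment $\fint_{e}\,\cdot\,\mathbf{n}$ on any edge. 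This is precisely the feature distinguishing the four blocks spanning $\uW{}_{T,e_je_k}^{\mathbf{n}}$ from the vertex block $\uw{}_{T,a_i}$.

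For the inclusion $\supseteq$, each atom $\uv_l$ is divergence-free and lies in $\uV{}_{h0,M}$ by its construction in Lemma~\ref{lem:Four-Cell func} and Definition~\ref{def:atom function}: it is a kernel function on its four-cell patch extended by zero, and the vanishing boundary degrees of freedom guarantee the weak continuity of $\uv_l\cdot\mathbf{n}$ and of $\fint_e\uv_l\cdot\mathbf{t}$ across the two edges separating its support from the rest of $M$. Since $\uv_l$ is assembled solely from the blocks above, $\int_{e_i}\uv_l\cdot\mathbf{n}_{e_i}=0$ for every interior edge $e_i$, whence $\uv_l\in\uZ{}_{h0}^{\mathbf{n}}(M)$.

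For $\subseteq$, assume first $m\geqslant 4$ and take $\uv_h\in\uZ{}_{h0}^{\mathbf{n}}(M)$. On each cell $T_s$ the restriction is divergence-free, has vanishing degrees of freedom on the boundary edge $T_s\cap\partial M$, and zero normal flux on $e_s,e_{s+1}$; hence $\uv_h|_{T_s}\in\uW{}_{T_s,e_se_{s+1}}^{\mathbf{n}}$, a four-dimensional space. I would first match $\uv_h$ on $T_1$: the four atoms whose support contains $T_1$, namely $\uv_{m-2},\uv_{m-1},\uv_m,\uv_1$, restrict on $T_1$ to four independent elements (one proportional to $\uw{}_{T_1,e_1}$, one to $\uw{}_{T_1,e_2}$, and two carrying the second-moment blocks $\uw{}_{T_1,e_1,e_2}$, $\uw{}_{T_1,e_2,e_1}$), so a unique combination of them agrees with $\uv_h$ on $T_1$. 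Subtracting it gives $\tilde{\uv}\in\uZ{}_{h0}^{\mathbf{n}}(M)$ with $\tilde{\uv}|_{T_1}=\undertilde{0}$. I then peel the remaining open chain $T_2,\dots,T_m$ from its end: once $\tilde{\uv}$ vanishes on $T_1,\dots,T_{j-1}$, continuity across $e_j$ forces all degrees of freedom of $\tilde{\uv}|_{T_j}$ on $e_j$ and on $T_j\cap\partial M$ to vanish, so $\tilde{\uv}|_{T_j}\in\uW{}_{T_j,e_{j+1}}=\mathrm{span}\{\uw{}_{T_j,e_{j+1}}\}$, which is matched by the single atom $\uv_j$. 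Iterating down to the three-cell open chain $T_{m-2}\cup T_{m-1}\cup T_m$ and invoking Lemma~\ref{lem:three cells dim 0} shows the remainder is $\undertilde{0}$, so $\uv_h\in\mathrm{span}\{\uv_l\}$. As every coefficient is determined uniquely, the atoms are linearly independent and the two inclusions yield the asserted equality; the degenerate case $m=3$ I would dispatch directly with the atoms of Remark~\ref{rem:var atoms func}.

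The main obstacle I anticipate is the orientation bookkeeping. Across a shared edge $e_s$ the outward normals and the tangents of the two incident cells are reversed, i.e. $\mathbf{n}_{T_{s-1},e_s}=-\mathbf{n}_{T_s,e_s}$ and $\mathbf{t}_{T_{s-1},e_s}=-\mathbf{t}_{T_s,e_s}$, and these sign relations must be tracked when the weak-continuity conditions are rewritten in terms of the coefficients of the $\uw{}_{T,\cdot}$ blocks on the two sides. This is exactly the step underlying the two local reductions I rely on—that the clamped end-cell space collapses to $\mathrm{span}\{\uw{}_{T_j,e_{j+1}}\}$ and that the four atoms at $T_1$ are independent there—so I would verify these with the explicit moment signatures read off from \eqref{eq:basis on T}. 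Once the sign-consistent identifications are in place the peeling is routine, and the flux-free hypothesis (which eliminates the only block $\uw{}_{T,a_i}$ carrying a nonzero $\fint_e\,\cdot\,\mathbf{n}$) is precisely what keeps each cell-restriction inside the four-dimensional $\uW{}_{T_s,e_se_{s+1}}^{\mathbf{n}}$ rather than the five-dimensional $\uW{}_{T_s,e_se_{s+1}}$.
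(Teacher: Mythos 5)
Your proposal is correct for $m\geqslant 4$ and is essentially the paper's own sweeping argument: both expand each cell restriction in the four-dimensional space $\uW{}_{T_s,e_se_{s+1}}^{\mathbf{n}}$ (the zero-flux hypothesis eliminating the only block $\uw{}_{T,a_i}$ with nonzero mean normal moment, exactly as you note), subtract atom functions cell by cell, and terminate with the vanishing statement on an open three-cell chain. The one substantive divergence is the initial step, and it is precisely where your deferral of $m=3$ hides a real issue: for $m=3$ only three atoms exist, and their restrictions to a cell span only a three-dimensional subspace of the four-dimensional $\uW{}_{T_1,e_1e_2}^{\mathbf{n}}$, so your full-restriction matching on the starting cell cannot be carried out there. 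The paper avoids this by matching only the degrees of freedom on a single interior edge $e_L$ with three atoms (possible because $\uw{}_{T,e_R}$ has vanishing DOFs on $e_L$) and then invoking Remark~\ref{rem:three cells no dimension} for the degenerate closed patch, an argument that works uniformly in $m$; your $m\geqslant 4$ peeling, which correctly stops at $j=m-3$ so that no atom support wraps onto already-zeroed cells, needs this replacement, not a ``direct dispatch,'' in the three-cell case.
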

\begin{proof}
Here we adopt a sweeping procedure (c.f., Ref.~\cite{Shuo.Zhang2020}) to conduct the proof. Let $T$ be an arbitrary cell in $M$, and $T_{R}\cup T \cup T_{L}$ be three cells in $M$ arranged in a clockwise direction. Let $e_{R}: = \overline{T}_{R}\cap \overline{T} $ and $e_{L}: = \overline{T}\cap \overline{T}_{L}$. Use $S_{L}$ and $S_{R}$ to represent the areas of $T_{L}$ and $T_{R}$, respectively. Let $d_{L}$ and $d_{R}$ represent the lengths of $e_{L}$ and $e_{R}$.

Given $\uv{}_h\in \uZ{}_{h0}^\mathbf{n}$, there exists $\alpha_i\in\mathbb{R}$, $1\leqslant i\leqslant 4$, such that 
$$
\uv{}_h|_T=\alpha_1\uw{}_{T,e_L}+\alpha_2\uw{}_{T,e_L,e_R}+\alpha_3\uw{}_{T,e_R,e_L}+\alpha_4\uw{}_{T,e_R}.
$$
Let $\uv{}_{k_{1}}$, $\uv{}_{k_{2}}$, and $\uv{}_{k_{3}}$ be three atom functions satisfy that $T$ is the first, second, and third cell of ${\rm supp}(\uv{}_{k_{1}})$, ${\rm supp}(\uv{}_{k_{2}})$, and ${\rm supp}(\uv{}_{k_{3}})$, respectively. Then
\begin{align*}
& \uv{}_{k_{1}}|_{T} = \frac{-d_{L}}{S+S_{L}} \uw{}_{T,e_{L}} \quad \mbox{or } \ \uv{}_{k_{1}}|_{T} = \frac{-d_{L}}{S+S_{L}} \uw{}_{T,e_{L}} + \frac{d_{R}}{S+S_{R}} \uw{}_{T,e_{R}} \mbox{ when } m = 3, \\
& \uv{}_{k_{2}}|_{T} =  \frac{-d_{R}}{S+S_{R}} \uw{}_{T,e_{R}}  - \uw{}_{T,e_{L},e_{R}}, \quad \uv{}_{k_{3}}|_{T} =  \frac{d_{L}}{S+S_{L}} \uw{}_{T,e_{L}}  - \uw{}_{T,e_{R},e_{L}}.
\end{align*}
Notice that $\uw{}_{T,e_{R}}|_{e_{L}}$ = 0 in the sense of DOF, i.e., the degrees of freedom associated with $e_L$ of  $\uw{}_{T,e_{R}}$ all vanish.
Set 
$$
 \uz{}_{0} = \Big( -\alpha_{1}\frac{S+S_{L}}{d_{L}}\uv{}_{k_{1}} - \alpha_{2}\uv{}_{k_{2}} - \alpha_{3}(\uv{}_{k_{1}}+\uv{}_{k_{3}})\Big): = r_{1}\uv{}_{k_{1}} + r_{2}\uv{}_{k_{2}} + r_{3}\uv{}_{k_{3}}.
$$
then $(\uv{}_{h}-\uz{}_{0})|_{e_{L}} = 0$ in the sense of DOF. 

\noindent{\bf (i)} If $m = 3$, it holds $\uv{}_{h}-\uz{}_{0}$ vanish on $M$ by Remark~\ref{rem:three cells no dimension}. Hence $\uv{}_{h} = \sum_{l= 1}^{m}r_{i}\uv{}_{k_{i}}$.

\noindent{\bf (ii)} If $m>3$, consider the left cell $T_{L}$ adjacent to $T$. There exists $\uv{}_{k_{4}}$ such that $T_{L}$ is the first cell of ${\rm supp}(\uv{}_{k_{4}})$. Therefore, $(\uv{}_{h}-\uz{}_{0})|_{T_{L}} \in {\rm span}\{\uv{}_{k_{4}}|_{T_{L}}\}$. Hence there exists a constant $r_{4}$, such that $(\uv{}_{h}-\uz{}_{0})|_{T_{L}} = r_{4}\uv{}_{k_{4}}$, and then $\uv{}_{h}-\sum_{l = 1}^{4}r_{l}\uv{}_{k_{l}}$ vanishes on $T_{L}$. Therefore, the number of supporting cells is reduced to $m-1$. Conduct a similar analysis to the next left cell, and the number of supporting cells can also be reduced by one. Repeat this process until the number of supporting cells is smaller than three, and they form a pattern as shown in Figure~\ref{fig:variant Three-Cell} (left). Finally it can be derived that $\uv{}_{h} = \sum_{l = 1}^{m}r_{l}\uv{}_{k_{l}}$.
\end{proof}

\paragraph{\bf Proof of Lemma \ref{lem:localkernel}} It suffices for us to show $\dim({\rm ker}(\dv, \uV{}_{h0,M}))\leqslant \dim(\uZ{}_{h0}^\mathbf{n}(M))+1$, and Lemma \ref{lem:localkernel} follows by Lemma \ref{lem:dim m}.

Let $\mathbf{n}_{e_{l}}$ be the unit normal vector on an interior edge $e_{l}$ with $l = 1:m$, whose direction is from $a_{0}$ to $a_{m}$. 
Given $\upsi{}_h\in {\rm ker}(\dv, \uV{}_{h0,M})$, then the divergence theorem leads to $\int_{e_i}\upsi{}_h\cdot\mathbf{n}_{e_i}=\int_{e_j}\upsi{}_h\cdot\mathbf{n}_{e_j}$ with $1\leqslant i,j\leqslant m$. Assume there exists a function $\upsi{}_h \in {\rm ker}(\dv, \uV{}_{h0,M})$, such that $\int_{e_i}\upsi{}_h\cdot\mathbf{n}_{e_i}=1$ with $i = 1:m$. Then, $\uv{}_h\in {\rm ker}(\dv, \uV{}_{h0,M})$ can be uniquely decomposed into $\uv{}_h=\alpha\upsi{}_h+\uv{}_h^1\ \ \mbox{with}\ \uv{}_h^1\in \uZ{}_{h0}^\mathbf{n},$ where $\alpha$ represents a constant. Namely, ${\rm ker}(\dv, \uV{}_{h0,M})=\uZ{}_{h0}^\mathbf{n}(M)+{\rm span}\{\upsi{}_h\}$. If such a function $\upsi{}_h$ does not exist, then ${\rm ker}(\dv, \uV{}_{h0,M})=\uZ{}_{h0}^\mathbf{n}(M)$. In any event, $\dim({\rm ker}(\dv, \uV{}_{h0,M}))\leqslant \dim(\uZ{}_{h0}^\mathbf{n}(M))+~1$. The proof is completed. 

\bibliography{ZZZ_bib2021}
\bibliographystyle{abbrv}

\end{document}